\def\chaptermark#1{}%whatever
\def\chapter{%
  \vspace{1cm}
%  \if@openright\cleardoublepage\else\clearpage\fi
  \thispagestyle{plain}\global\@topnum\z@
  \@afterindenttrue \secdef\@chapter\@schapter}
\def\@chapter[#1]#2{\refstepcounter{chapter}%
  \ifnum\c@secnumdepth<\z@ \let\@secnumber\@empty
  \else \let\@secnumber\thechapter \fi
  \typeout{\chaptername\space\@secnumber}%
  \def\@toclevel{0}%
  \ifx\chaptername\appendixname \@tocwriteb\tocappendix{chapter}{#2}%
  \else \@tocwriteb\tocchapter{chapter}{#2}\fi
  \chaptermark{#1}%
  \addtocontents{lof}{\protect\addvspace{10\p@}}%
  \addtocontents{lot}{\protect\addvspace{10\p@}}%
  \@makechapterhead{#2}\@afterheading}
\def\@schapter#1{\typeout{#1}%
  \let\@secnumber\@empty
  \def\@toclevel{0}%
  \ifx\chaptername\appendixname \@tocwriteb\tocappendix{chapter}{#1}%
  \else \@tocwriteb\tocchapter{chapter}{#1}\fi
  \chaptermark{#1}%
  \addtocontents{lof}{\protect\addvspace{10\p@}}%
  \addtocontents{lot}{\protect\addvspace{10\p@}}%
  \@makeschapterhead{#1}\@afterheading}
\newcommand\chaptername{Part}
\def\@makechapterhead#1{\global\topskip 7.5pc\relax
  \begingroup
  \fontsize{\@xivpt}{18}\bfseries\centering
    \ifnum\c@secnumdepth>\m@ne
      \leavevmode \hskip-\leftskip
      \rlap{\vbox to\z@{\vss
          \centerline{\normalsize\mdseries
              \uppercase\@xp{%\chaptername
              }%\enspace\thechapter
              }
          \vskip 3pc}}\hskip\leftskip\fi
     \fontsize{\@xivpt}{14}\chaptername \enspace \thechapter. \enspace #1\par \endgroup
  \skip@34\p@ \advance\skip@-\normalbaselineskip
  \vskip\skip@ }
\def\@makeschapterhead#1{\global\topskip 7.5pc\relax
  \begingroup
  \fontsize{\@xivpt}{18}\bfseries\centering
  #1\par \endgroup
  \skip@34\p@ \advance\skip@-\normalbaselineskip
  \vskip\skip@ }
\def\appendix{\par
  \c@chapter\z@ \c@section\z@
  \let\chaptername\appendixname
  \def\thechapter{\@Alph\c@chapter}}
\newcounter{chapter}
\newif\if@openright
\renewcommand{\thechapter}{\Roman{chapter}} 
\newtheorem{proposition}{Proposition}
\newtheorem{theorem}[proposition]{Theorem}
\newtheorem{lemma}[proposition]{Lemma}
\theoremstyle{remark}
\newtheorem{remark}[proposition]{Remark}
\theoremstyle{definition}
\numberwithin{equation}{section}
\numberwithin{proposition}{section}
\numberwithin{figure}{section}
\numberwithin{table}{section}
\renewcommand{\le}{\leqslant}
\renewcommand{\ge}{\geqslant}
\newcommand{\Z}{\mathbb{Z}}
\newcommand{\N}{\mathbb{N}}
\newcommand{\R}{\mathbb{R}}
\newcommand{\E}{\mathbb{E}}
\renewcommand{\P}{\mathbb{P}}
\newcommand{\Zd}{{\mathbb{Z}^d}}
\newcommand{\Rd}{{\mathbb{R}^d}}
\newcommand{\B}{\mathbb{B}}
\newcommand{\PP}{\mathbf{P}}
\newcommand{\EE}{\mathbf{E}}
\newcommand{\eps}{\varepsilon}
\renewcommand{\a}{\mathbf{a}}
\newcommand{\ahom}{{\overbracket[1pt][-1pt]{\a}}}  % change the first number to change the size of the bar
\newcommand{\had}{{\hat \a_\de}}
\renewcommand{\subset}{\subseteq}
\newcommand{\Ll}{\left}
\newcommand{\Rr}{\right}
\renewcommand{\d}{\mathrm{d}}
\DeclareMathOperator{\dist}{dist}
\DeclareMathOperator{\diag}{diag}
\newcommand{\var}{\mathbb{V}\mathrm{ar}}
\newcommand{\X}{\mathcal{X}}
\renewcommand{\tilde}{\widetilde}
\renewcommand{\hat}{\widehat}
\newcommand{\mcl}{\mathcal}
\newcommand{\al}{\alpha}
\newcommand{\be}{\beta}
\newcommand{\de}{\delta}
\newcommand{\1}{\mathds{1}}
\newcommand{\e}{\mathbf{e}}
\newcommand{\Lpot}{L^2_{\mathrm{pot}}}
\renewcommand{\L}{{\mathcal L}}
\newcommand{\T}{\mathsf T}
\newcommand{\la}{\left\langle}
\newcommand{\ra}{\right\rangle}
\begin{document}

\title[Efficient methods for estimation of homogenized coefficients]{Efficient methods for the estimation of homogenized coefficients}

\begin{abstract}
The main goal of this paper is to define and study new methods for the computation of effective coefficients in the homogenization of divergence-form operators with random coefficients. The methods introduced here are proved to have optimal computational complexity, and are shown numerically to display small constant prefactors. In the spirit of multiscale methods, the main idea is to rely on a progressive coarsening of the problem, which we implement via a generalization of the Green-Kubo formula. The technique can be applied more generally to compute the effective diffusivity of any additive functional of a Markov process. In this broader context, we also discuss the alternative possibility of using Monte-Carlo sampling, and show how a simple one-step extrapolation can considerably improve the performance of this alternative method.
\end{abstract}

\author[J.-C. Mourrat]{Jean-Christophe Mourrat}
\address[J.-C. Mourrat]{Ecole normale sup\'erieure de Lyon, CNRS, Lyon, France}
\email{jean-christophe.mourrat@ens-lyon.fr}

\keywords{homogenization, multiscale methods}
\subjclass[2010]{35B27}
\date{\today}

\maketitle

%
%
%
%%%%%%%%%%%%%%%%%%%%%%%%%%%%
%%%%%%%%%%%%%%%%%%%%%%%%%%%%
%
%
%

\section{Introduction}

Let $-\nabla \cdot \a(x) \nabla$ be a divergence-form operator with random coefficients. If the law of the matrix field $(\a(x))$ is stationary and ergodic, then the operator homogenizes over large scales to an operator with constant, deterministic coefficients $-\nabla \cdot \ahom \nabla$. The main goal of this paper is to explore numerical methods to estimate the homogenized matrix $\ahom$. We can formalize this task as follows.

\smallskip

\noindent \textbf{Problem.} \emph{Find an algorithm that, given $\de > 0$ and a realization of the random coefficient field $(\a(x))$, outputs a matrix $\hat \a_\de$ such that}
\begin{equation}  
\label{e.prob.algo}
\E \Ll[|\ahom - \hat \a_\de|^2  \Rr]^\frac 1 2 \le \de.
\end{equation}

\smallskip 

In \eqref{e.prob.algo}, we chose to measure the quality of the approximation $\hat \a_\de$ in an $L^2$ sense for definiteness, but other choices (e.g.\ convergence in probability, or higher moments) are equally valid, and in fact would only marginally alter the results presented below. The point is that $\had$ should typically be within $\de$ or less from $\ahom$. 

\smallskip

Naturally, the real question is to find an \emph{efficient} algorithm. We focus on a discrete-space setting, and assume throughout that the coefficients are uniformly elliptic, independent and identically distributed (i.i.d.). In order to measure computational effort, we use a slightly loosely defined notion of ``elementary'' operation: any memory access, boolean operation, floating point addition or multiplication counts as one operation. We first give a lower bound on the number of operations any algorithm must use.
\begin{proposition}[Complexity lower bound]
\label{p.lower.bound}
No algorithm can output an approximation $\hat \a_\de$ of $\ahom$ in the sense of \eqref{e.prob.algo} within $o(\de^{-2})$ operations.
\end{proposition}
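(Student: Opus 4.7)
The plan is to prove the bound via Le Cam's two-point method. Any algorithm using at most $n$ operations can make at most $n$ queries to the coefficient field, and since coefficients are i.i.d., the algorithm effectively sees at most $n$ i.i.d.\ samples from the single-site law (adaptive queries and internal randomisation do not circumvent this, by the data processing inequality applied to the algorithm's full transcript). It therefore suffices to exhibit two admissible product laws $\mu_0, \mu_1$ on the coefficient field for which
\begin{equation*}
|\ahom(\mu_0) - \ahom(\mu_1)| \ge 4\delta \qquad \text{and} \qquad KL(\mu_0 \,\|\, \mu_1) \le C \delta^2 ,
\end{equation*}
since then standard information-theoretic tools immediately yield $n \gtrsim \delta^{-2}$.

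For the construction, a convenient one-parameter family is that of scalar coefficients $\a(x) = \lambda(x) I_d$ with $\lambda(x) \in \{1, 2\}$ Bernoulli of parameter $p \in (0,1)$. The homogenized matrix $\ahom(p)$ is non-constant in $p$ (it equals $2 I_d$ at $p=0$ and $I_d$ at $p=1$) and depends smoothly on $p$, a classical fact of stochastic homogenization; hence there exists $p^* \in (0,1)$ at which the derivative is a nonzero multiple $\kappa$ of $I_d$. Taking $p_0, p_1$ near $p^*$ with $|p_0 - p_1| = 4\delta / |\kappa|$ secures the first inequality for $\delta$ small, while the standard chi-squared bound $KL(\mathrm{Ber}(p_0) \,\|\, \mathrm{Ber}(p_1)) \le C'(p_0 - p_1)^2$ (valid once $p_0, p_1$ are bounded away from $\{0,1\}$), together with the tensorisation identity $KL(\mu_0^{\otimes n} \,\|\, \mu_1^{\otimes n}) = n \cdot KL(\mu_0 \,\|\, \mu_1)$, gives the second.

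Now suppose an algorithm $\mathsf{A}$ achieves the $L^2$-bound $\delta$ under both $\mu_0$ and $\mu_1$. By Chebyshev's inequality, $\had$ lies within $2\delta$ of $\ahom(\mu_i)$ with probability at least $3/4$ under each $\mu_i$; since the two target values are at distance $\ge 4\delta$, the threshold test comparing $\had$ to the midpoint of $\ahom(\mu_0)$ and $\ahom(\mu_1)$ correctly identifies the underlying law with probability $\ge 3/4$, forcing the total variation between the two laws of $\had$ to be at least $1/2$. On the other hand, the data processing inequality followed by Pinsker's inequality and tensorisation yields
\begin{equation*}
TV\bigl(\mathrm{Law}_{\mu_0}(\had), \mathrm{Law}_{\mu_1}(\had)\bigr) \le TV\bigl(\mu_0^{\otimes n}, \mu_1^{\otimes n}\bigr) \le \sqrt{\tfrac{n}{2} \, KL(\mu_0 \,\|\, \mu_1)} \le C'' \sqrt{n} \, \delta .
\end{equation*}
Comparing the two estimates yields $n \gtrsim \delta^{-2}$, which is the desired lower bound.

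The main subtlety lies in the regularity of $p \mapsto \ahom(p)$: non-constancy is immediate from the boundary values, but the quantitative bound $|\ahom(p_0) - \ahom(p_1)| \gtrsim |p_0 - p_1|$ for some range of $p$ requires at least Lipschitz behaviour (equivalently, a non-vanishing derivative somewhere). This is a well-known feature of stochastic homogenization for i.i.d.\ discrete-distribution coefficients, and it is really the only non-elementary input the argument requires; the rest is a textbook application of the two-point method.
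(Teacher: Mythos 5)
Your proposal takes essentially the same route as the paper's: both single out a one-parameter family of i.i.d.\ two-valued conductance laws, both rely on the $C^1$ and non-constant dependence of $p \mapsto \ahom(p)$ (the paper cites \cite{dl-diff}), and both run a two-point information-theoretic argument governed by the quadratic behaviour of the single-site KL divergence. Whether one passes through Pinsker's inequality and total variation, as you do, or bounds the empirical log-likelihood ratio directly (the paper's $\hat{\mathrm{kl}}$, restricted to the event $\{\hat{\mathrm{kl}} \ge -1\}$), is a cosmetic difference. The one genuine gap is your opening sentence, which quietly replaces a random operation count by a deterministic $n$: the number of operations is a random variable $\mathcal N_\de$, and the hypothesis to contradict is only that $\de^2 \mathcal N_\de \to 0$ in probability. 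The fix is to pick a deterministic $N_\de = o(\de^{-2})$ so that the algorithm queries only the first $N_\de$ coordinates with probability at least $1/2$, and then restrict the change of measure (or your TV comparison) to that event, exactly as the paper does. A further small point: in the paper's setting the i.i.d.\ randomness lives on edges, so your $\a(x) = \lambda(x) I_d$ should really be edge conductances, but this changes nothing substantive.
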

The main method we propose to compute an approximation of $\ahom$ is as follows. We fix $\xi \in \Rd$ of unit norm, set
\begin{equation*}  %\label{e.}
v_{-1}(x) := (\nabla \cdot \a\xi)(x) \qquad (x \in \Z^d),
\end{equation*}
and for each $k \in \N$, we let $v_k$ be the solution to
\begin{equation}  
\label{e.def.vk}
(2^{-k} - \nabla \cdot \a \nabla)v_k = 2^{-k} v_{k-1} \qquad \text{on } \Zd. 
\end{equation}
(We refer to Section~\ref{s.defs} for precise notation and assumptions.) Note that the functions $(v_k)_{k \ge -1}$ depend on the coefficient field $\a$, although we keep this dependence implicit. For each $r \ge 0$, we write 
\begin{equation*}  %\label{e.}
B_r := \{-\lfloor r \rfloor, \ldots, \lfloor r \rfloor\}^d.
\end{equation*}
\begin{theorem}[Efficient approximation of $\ahom$]
\label{t.main}
For each $\eps \in (0, \frac{d-1}{2d})$, there exists a constant $C < \infty$ such that the following holds. Fix $n \in \N$, and denote
\begin{equation}  
\label{e.def.main.rk}
r_k := 2^{n- \Ll( \frac 1 2 - \eps \Rr) k} \qquad (k \in \{0,\ldots,n\}),
\end{equation}
\begin{equation}  
\label{e.def.final.hatsigma}
\hat \sigma^2_n := \sum_{k = 0}^n \frac{1}{|B_{r_k}|} \sum_{x \in B_{r_k}} 2^k \Ll( v_{k-1}(x) v_k(x) + v_k^2(x) \Rr).
\end{equation}
We have 
\begin{equation}  
\label{e.main}
\E \Ll[ \Ll( \xi \cdot \ahom \xi - \E[\xi \cdot \a \xi] + \hat \sigma^2_n \Rr) ^2	 \Rr] ^\frac 1 2 \le C 2^{-\frac {dn}{2}}.
\end{equation}
Moreover, the quantity $\hat \sigma^2_n$ can be computed in at most $C n 2^{dn}$ operations. 
\end{theorem}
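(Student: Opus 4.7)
My plan is to prove the theorem in three steps: first derive an exact Green-Kubo-type identity expressing $\xi\cdot\ahom\xi - \E[\xi\cdot\a\xi]$ as a telescoping series whose partial sums match the expectations of the summands in \eqref{e.def.final.hatsigma}; second, bound separately the bias and the variance of $\hat\sigma_n^2$; third, count the operations. Throughout, write $\L := -\nabla\cdot\a\nabla$, $a_k := (I + 2^k \L)^{-1}$, and $P_k := \prod_{j=0}^{k-1} a_j$, so that \eqref{e.def.vk} reads $v_k = a_k v_{k-1} = P_{k+1}v_{-1}$; note that $v_{-1}$ is mean-zero and in the range of the discrete divergence.

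For the identity, I would start from $\xi\cdot\ahom\xi = \E[\xi\cdot\a(\xi + \nabla\phi)]$ with $\phi$ the stationary corrector (so $\L\phi = v_{-1}$); discrete integration by parts gives $\xi\cdot\ahom\xi - \E[\xi\cdot\a\xi] = -\E[v_{-1}\,\L^{-1}v_{-1}]$. The algebraic identity $I - a_k = 2^k \L\, a_k$ yields
\[ 2^k a_k(I + a_k) = \L^{-1}(I - a_k^2), \]
which, combined with stationarity and $P_k^2 a_k^2 = P_{k+1}^2$, gives
\[ \E\bigl[2^k(v_{k-1}v_k + v_k^2)\bigr] = \E\bigl[v_{-1}\,\L^{-1}(P_k^2 - P_{k+1}^2)\,v_{-1}\bigr]. \]
Summing over $k \in \{0,\dots,n\}$ telescopes, so the expected truncation error satisfies
\[ \xi\cdot\ahom\xi - \E[\xi\cdot\a\xi] + \E[\hat\sigma_n^2] = -\E[v_{-1}\,\L^{-1}P_{n+1}^2\,v_{-1}]. \]

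Next I would split the expected squared error into squared bias plus $\var[\hat\sigma_n^2]$. For the bias, spectral calculus on $\L$ combined with the fact that $v_{-1}$ is a discrete divergence bounds the spectral measure of $v_{-1}$ against $\L$ by $C\lambda^{d/2}\,d\lambda$ near $0$ (standard in the i.i.d.\ setting, via quantitative estimates on the massive corrector). Since $P_{n+1}^2(\lambda) = \prod_{j=0}^n(1+2^j\lambda)^{-2}$ acts essentially as a sharp cutoff at $\lambda \sim 2^{-n}$, the bias is bounded in absolute value by $\int_0^{2^{-n}}\lambda^{d/2-1}\,d\lambda \lesssim 2^{-dn/2}$. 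For the variance, set $X_k(x) := 2^k(v_{k-1}v_k + v_k^2)(x)$ and $Y_k := |B_{r_k}|^{-1}\sum_{x\in B_{r_k}} X_k(x)$. The Green's function of $2^{-k}+\L$ decays exponentially at scale $2^{k/2}$, so $\cov(X_k(x), X_k(y))$ decays exponentially for $|x-y| \gg 2^{k/2}$; combined with Efron-Stein concentration applied to $\a\mapsto v_k(0)$ (which gives $\E[X_k(0)^2] \lesssim 2^{-dk}$), this yields $\var[Y_k] \lesssim 2^{dk/2}\, r_k^{-d}\, 2^{-dk} = 2^{-dn - d\eps k}$. Applying $\var[\sum_k Y_k] \leq \bigl(\sum_k \sqrt{\var[Y_k]}\bigr)^2$ then gives $\var[\hat\sigma_n^2] \lesssim 2^{-dn}$, matching the squared-bias bound.

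For the complexity, each $v_k$ is approximated on $B_{r_k}$ by solving \eqref{e.def.vk} on a slightly enlarged box of radius $\leq C r_k$; the exponential decay of the massive Green's function at scale $2^{k/2}$ (much smaller than $r_k$ under $\eps < (d-1)/(2d)$) absorbs the localization error into the previous bounds. A multigrid-type solver handles each such system in $O(r_k^d \log r_k)$ operations, and since $\sum_k r_k^d = 2^{dn}\sum_k 2^{-d(1/2-\eps)k} \leq C\cdot 2^{dn}$, the total cost is $O(n\cdot 2^{dn})$. The main technical obstacle is the variance estimate: one needs sharp pointwise moment bounds on $v_k$ (via Efron-Stein or logarithmic Sobolev inequalities) together with quantitative spatial decorrelation at scale $2^{k/2}$ (via decay estimates on the massive Green's function). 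The restriction $\eps < (d-1)/(2d)$ should appear precisely as the regime in which these concentration estimates give the required rate.
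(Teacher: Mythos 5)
Your overall structure matches the paper's: derive a telescoping Green--Kubo-type identity, bound bias and variance separately, then count operations. Your algebraic derivation of the telescope (via $I - a_k = 2^k\L a_k$ and $P_k^2 a_k^2 = P_{k+1}^2$) is a slick reversible-case version of the paper's Theorem~\ref{t.pgk}, and the spectral-calculus bias bound (cutoff at $\lambda\sim 2^{-n}$ against a spectral density $\lesssim \lambda^{d/2-1}$) is correct and equivalent to the paper's route through Propositions~\ref{p.geom.step} and~\ref{p.moment.bound}. That part is fine.

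The variance estimate is where your proposal is genuinely incomplete, and where the paper invests its effort. The statement ``the Green's function of $2^{-k}+\L$ decays exponentially at scale $2^{k/2}$, so $\cov(X_k(x),X_k(y))$ decays exponentially for $|x-y|\gg 2^{k/2}$'' is not a valid deduction: $v_k$ is a nonlinear functional of the random field, and pointwise decay of the deterministic Green kernel says nothing directly about decorrelation of $v_k(x)$ and $v_k(y)$. What is actually needed, and what Proposition~\ref{p.spat.mix} proves, is an Efron--Stein inequality applied to the spatial average $\sum_{x\in B_r} v_k^2(x)$, which requires control of the Glauber derivatives $\partial_e v_k(x)$ via the decomposition \eqref{e.decomp.deu} (the vertical derivative of the parabolic flow) combined with \emph{weighted, annealed gradient} Green function estimates (Proposition~\ref{p.gradient.green}), not merely pointwise Green function decay. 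You do flag this as ``the main technical obstacle,'' so you are aware the argument isn't closed there, but it is worth emphasizing that this is the core lemma of the error analysis, not a routine application of concentration.

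You have also misattributed the source of the restriction $\eps < \frac{d-1}{2d}$. It does \emph{not} come from the concentration estimates: the variance bound $\var[\hat\sigma_n^2]\lesssim 2^{-dn}$ holds for every $\eps\in(0,\tfrac12)$ (indeed Theorem~\ref{t.eff} is stated on that full range). The restriction arises purely in the \emph{complexity} analysis: when the elliptic systems are solved by plain conjugate gradient with condition number $\sim 2^k$, the cost per scale is $\sim (r_k + Cn2^{k/2})^d\,2^{k/2}$, and the geometric decay of $r_k^d\,2^{k/2} = 2^{dn-(\frac{d-1}{2}-\eps d)k}$ requires precisely $\eps < \frac{d-1}{2d}$. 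Your choice of a multigrid solver (condition-number independent) would give a different, arguably cleaner complexity count, but it is a heavier tool than what the paper actually needs; one of the paper's explicit points is that \emph{no preconditioning} is required. Either route is acceptable, but you should be aware of the trade-off and of where the $\eps$ constraint really enters.
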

By the central limit theorem, the quantity $\E[\xi \cdot \a \xi]$ can be computed at precision $\delta > 0$ in $C \de^{-2}$ operations. Hence, Theorem~\ref{t.main} gives us a method that requires $\de^{-2} \log(\de^{-1})$ operations to compute an approximation of $\ahom$ at precision $\de > 0$ in the sense of~\eqref{e.prob.algo}, essentially matching the lower bound given by Proposition~\ref{p.lower.bound}.

\smallskip

For $\eps = 0$, the statement of Theorem~\ref{t.main} remains valid provided that we replace the right side of \eqref{e.main} by $C n 2^{-\frac{dn}{2}}$. I expect that the sharp upper bound in this case is $C n^\frac 1 2 2^{-\frac{dn}{2}}$. Whatever may be, this additional multiplicative factor of $n$ or $n^\frac 1 2$ does not degrade the bound much. For simplicity, it is the version with $\eps = 0$ that was implemented numerically. 

\smallskip

The proof of Theorem~\ref{t.main} relies in particular on the assumption that the law of the coefficient field $\P$ is invariant under the action of $\Z^d$. If this assumption is weakened and the law is only assumed to be invariant under the action of a sublattice of $\Z^d$ which may be unknown to us a priori, then one can use masks in the spatial averages over $B_{r_k}$ appearing in \eqref{e.def.final.hatsigma} in order to avoid boundary layer effects. When weakening the independence assumption on the coefficients, I expect Theorem~\ref{t.main} to remain unchanged provided that the field $x \mapsto \a(x)$ is sufficiently uncorrelated for partial sums over boxes to satisfy a central limit theorem.

\smallskip

The power of the method presented in Theorem~\ref{t.main} comes from the fact that the domain of interest shrinks rapidly as $k$ increases. Moreover, when $k$ is of order one, the condition number of the elliptic problems we need to solve and the size of the associated boundary layers are also of order one. As $k$ increases, condition numbers and boundary layers become larger, but these adverse effects are more than compensated by the reduction of size of the domain of interest. In fact, in order to prove the complexity upper bound in Theorem~\ref{t.main}, we do not need to use any preconditioner for the resolution of the elliptic problems: a direct application of the conjugate gradient method suffices. Moreover, the overhead caused by the presence of boundary layers only participates to a negligible fraction of the total computational time: this fraction of time is of the order of~$2^{-\frac n 2}$ in dimension $d = 2$, and of the order of~$2^{- n}$ in dimension $d \ge 3$.

\smallskip

It is straightforward to adapt the method of Theorem~\ref{t.main} to accommodate for parallel computing. Indeed, we will see that the correlation length of the random field
\begin{equation}  
\label{e.mixing.field}
x \mapsto v_{k-1}(x) v_k(x) + v_k^2(x)
\end{equation}
is of the order of $2^{\frac k 2}$. Computing spatial averages of this random field is the best strategy if all computations are done sequentially. On the other hand, if parallel computing is available, then we may proceed differently and compute separately $v_k^{(1)}, v_k^{(2)}, \ldots$, based on independent copies of the coefficient field but otherwise defined in the same way as $v_k$, and then average the results. The general rule is that we may replace any spatial average
\begin{equation*}  %\label{e.}
\frac 1 {|B_r|} \sum_{x \in B_r} \Ll( v_{k-1}(x) v_k(x) + v_k^2(x)  \Rr) 
\end{equation*}
by
\begin{equation*}  %\label{e.}
\frac{1}{\lceil 2^{-\frac {kd} 2} r^d \rceil} \sum_{i = 1}^{\lceil 2^{-\frac {kd} 2} r^d \rceil} \Ll( v_{k-1}^{(i)}(0) v_k^{(i)}(0) + \Ll(v_k^{(i)}(0)\Rr)^2  \Rr) ,
\end{equation*}
or with any other intermediate combination of spatial averaging and independent sampling. 

\smallskip

Disregarding logarithmic factors, the previously known best procedure for computing $\ahom$ at precision $\de > 0$ consists in averaging $\de^{-1}$ samples of the energy of an approximate corrector calculated on a box of side length $\de^{-\frac 1 d}$---see \cite{Gloria} and \cite[Section~3.3.4]{cemracs} for details. If no preconditioner is used, then the computational cost of this method is of the order of $\de^{-\Ll(2 + \frac 1 d\Rr)}$, up to logarithmic factors. While efficient preconditioners can bring this estimate back to being of the order of~$\de^{-2}$, up to logarithmic factors, the prefactors involved may be very large in practice. I did some modest tests with an incomplete Cholesky factorization as a preconditioner. While this did reduce the number of iterations involved in the conjugate gradient method, it was not sufficient to compensate for the induced overhead and resulted in degraded performance. Other preconditioners will certainly show better performance (using for instance the homogeneous Laplacian, which can be solved in almost linear time by fast Fourier transform), and I did not investigate this question further. In any case, the computational overhead caused by boundary layers for this older method in fact takes up an asymptotically full proportion of the computational time, since boundary layers occupy a volume of the order of $\de^{-1} \log^d(\de^{-1})$ on each of the $\de^{-1}$ domains. Naturally, this observation calls for an adjustment of the strategy: one should solve fewer problems, each of larger volume. But this runs counter to the fact that it is computationally best to solve smaller elliptic problems, and only suboptimal compromises can be found in this way.

\smallskip 

An additional advantage of the method described in Theorem~\ref{t.main} is that it is cumulative: all computations involved in the evaluation of $\hat{\sigma}^2_n$ remain useful if the approximation needs to be refined. In contrast, in the older method, computations done for a coarse precision must be thrown away if greater precision is then desired.

\smallskip

The theoretical arguments presented here are supplemented by numerical tests. The numerical results confirm the theoretical predictions, and display small constant prefactors. 

\smallskip

As was said above, we assume that the coefficient field $(\a(x))$ is uniformly elliptic and i.i.d., and we describe algorithms that perform well under these assumptions. In practice, it seems more appropriate to implement more ``agnostic'' algorithms, that would evaluate on the fly the behavior of the mean and variance of the spatial averages appearing in \eqref{e.def.final.hatsigma} as $r$ and $k$ vary, and adapt the numerical scheme dynamically. We will not discuss this possibility any further here. 

\smallskip

It would be interesting to investigate how the new method presented here adapts to the continuous-space setting. In this case, I expect that the error analysis remains unchanged. The complexity analysis would however also have to take into account the cost of the small-scale resolution of the equations, below the typical length scale of the correlations of the coefficient field. This problem is separate from the homogenization phenomenon, and is likely to come as the same multiplicative factor in the complexity analysis of any method.

\smallskip

Another interesting line of further research concerns the behavior of the effective conductivity of percolation clusters. We refer to \cite{hughes,kum-stflour} for very nice surveys on this problem. The method presented here should allow for a sharper numerical analysis of the effective conductivity as the percolation probability approaches criticality. Despite immense progress on the understanding of critical and near-critical percolation in two dimensions, it is still not known whether the effective conductivity behaves as a power law near criticality (let alone compute the exponent) in small dimensions. See however \cite{kesten-subdiff,jarnac,damronetal}, as well as \cite{barandco,koznac, BACF1, BACF2} for very fine results in high dimensions.

\smallskip

It turns out that the underlying idea of the method can be recast in the more general framework of additive functionals of Markov processes. The paper is thus split into two parts. In the first part, we describe the general strategy in this broader context. We also discuss alternative methods based on Monte-Carlo sampling, and show that a simple one-step extrapolation can in general improve the efficiency of this approach dramatically. The second part of the paper is concerned with the computation of $\ahom$ per se. In particular, we prove Theorem~\ref{t.main} and report on numerical results there. Readers who are only interested in the problem of computing $\ahom$ are encouraged to go directly to the second part of this paper, where we explain how to extract relevant information from the first part without having to refer to Markov processes. 

\smallskip

We conclude this introduction with a brief review of related work. The core idea of the method presented in Theorem~\ref{t.main} is to decompose the problem into a series of scales. 
%Computations involving small-scale behavior provide with the bulk of the information on the homogenized coefficients. They are easy to compute but need to be averaged over many samples. The estimator is then refined by adding small corrections involving large-scale information on the operator. This is computationally more demanding, but requires to be averaged over much fewer samples since the overall contribution is small. 
The approach is inspired by the renormalization scheme introduced in \cite{AKM2}, which progressively coarsens the operator and relies on a ``quadratic response'' behavior around the homogenization limit---see also the discussion below \eqref{e.basis}. Other relevant recent references for quantitative homogenization include \cite{vardecay,GO1,GO2,GNO,MO,AS,GNO3,AM,AKM,GO5,AKMBook}.

\smallskip

More generally, the approach presented here is similar in spirit to multiscale methods \cite{brandt, eq-free, hmm}. In this context, it was  quickly realized that the resolution of cell problems or the computation of adapted finite-element base functions can display so-called ``resonant errors'' due to inappropriate boundary condition \cite{hw, hwc,yue-e,eh-book}. A powerful approach was introduced in \cite{blb,GO2,Gloria, GloriaH, vardecay,approx}, based on the introduction of a massive term that smoothly makes the faraway boundary condition irrelevant. The methods presented here are faithful to this approach; see also \cite{malpet, henpet, time-dep}. %The analysis of the convergence rates for these methods was obtained in \cite{GO1,GO2,approx,GNO}.

\smallskip

A different line of research focused on developing more efficient methods for computing homogenized coefficients of small random perturbations of a simpler, e.g.\ homogeneous, medium, in the spirit of earlier fundamental insights \cite{maxwell, rayleigh}. A typical example is that of spherical inclusions randomly placed at a small volume fraction $p >0$ in an otherwise constant medium. One looks for asymptotic formulas of the homogenized coefficients involving easily computable coefficients, in the regime $p \ll 1$. We refer to \cite{kozlov, papa-survey, bm1, alb1,alb2,almog1,almog2, dl-diff, DG} for a mathematical analysis of the problem. Note that the seemingly simpler task of determining the value of the volume fraction $p$ at precision $\de$ from a sample of the coefficient field already requires of the order of $\de^{-2}$ operations (the proof of the lower bound is a minor variant of the proof of Proposition~\ref{p.lower.bound}).

\smallskip

Lastly, several techniques have been explored to reduce the size of the fluctuations of quantities of interest, typically by a constant multiplicative factor. We refer to \cite{lebris-survey} for a recent survey.

\chapter{General methods}
\label{part.one}

\section{Main results}

In this first part of the paper, we explore the problem of computing the effective diffusivity of general additive functionals of a Markov process. 
Let $(X(t))_{t \ge 0}$ be a Feller process taking values in a metric space $\mcl X$, and denote the associated family of probability laws by $(\PP_x)_{x \in \mcl X}$, with corresponding expectations $(\EE_x)_{x \in \mcl X}$. We assume that there exists a probability measure $\P$ on~$\mcl X$ (with corresponding expectation $\E$) such that the process $(X_t)$ is reversible and ergodic with respect to $\P$; we denote the law of $(X(t))$ started from the measure~$\P$ by $\PP_\P$ (with corresponding expectation $\EE_\P$). The assumption of reversibility will be dropped shortly, and is only meant to facilitate the exposition here. Given $f \in L^2(\P)$ of mean zero, one can check using reversibility that the limit 
\begin{equation}  
\label{e.def.H-1}
\sigma^2(f) :=  \lim_{t \to + \infty}  \EE_\P \Ll[ \Ll( \frac 1 {\sqrt{t}}\int_0^t f(X(s)) \ \d s \Rr)^2  \Rr]  \in [0,+\infty]
\end{equation}
always exists. Denote by $\L$ the (non-negative) infinitesimal generator of $(X(t))$, and $\langle f, g \rangle := \E[fg]$. An equivalent characterization of $\sigma^2(f)$ is given by
\begin{equation}  
\label{e.Poisson}
\sigma^2(f) = 2 \la  f, \L^{-1} f \ra := 2 \lim_{\lambda \to 0} \la f, (\lambda + \L)^{-1} f \ra.
\end{equation}
Kipnis and Varadhan \cite{kipvar} showed that the condition $\sigma^2(f) < \infty$ implies that the additive functional
\begin{equation}  
\label{e.additive.func}
\frac 1 {\sqrt{t}} \int_{0}^t f(X(s)) \, \d s
\end{equation}
converges in law under $\PP_\P$ to a centered Gaussian random variable of variance~$\sigma^2(f)$ as $t$ tends to infinity. 

\smallskip

Our goal is to discuss practical ways to compute $\sigma^2(f)$. There are two standard methods to address this question (see for instance \cite{cemracs}). The first one uses the formula \eqref{e.def.H-1} as the basis for a Markov chain Monte-Carlo (MCMC) algorithm. The second method dispenses with the simulation of the Markov chain, and uses the formula \eqref{e.Poisson} instead. The problem then becomes that of computing an approximation of $\L^{-1} f$, and computing the average. 

\smallskip

We study MCMC methods in Section~\ref{s.mcmc}. The main contribution of this section is to show that a one-step extrapolation of the naive MCMC algorithm yields significant improvement of the approximation in most cases.

\smallskip

We then explore methods based on the formula \eqref{e.Poisson}. Similarly to multiscale methods, we aim to split the computational effort involved in the computation of~$\L^{-1} f$ into a hierarchy of scales. A good starting point for witnessing this multiscale decomposition is the  Green-Kubo formula, which in our present context is a direct consequence of \eqref{e.Poisson} and reads
\begin{equation}
\label{e.gk1}
\sigma^2(f) = 2 \int_0^{+\infty} \la f, e^{-t\L} f \ra \, \d t  = 2 \int_0^{+\infty} \EE_\P \Ll[ f(X(0))  f(X(t))  \Rr] \, \d t.
\end{equation}
%where on the right side, we understand that the expectation $\E$ is integrating the variable $x \in \mcl X$, that is,
%\begin{equation*}  %\label{e.}
%\E \Ll[ f(x) \EE_x \Ll[ f(X(t)) \Rr]  \Rr] = \int_\mcl X f(x) \EE_x \Ll[ f(X(t)) \Rr]  \, \d \P(x).
%\end{equation*}
We think of the part of the integral with $t \lesssim 1$ as carrying the small-scale information of the process, while the part with $t \gg 1$ carries the larger-scale information.
The first, very elementary but crucial, modification to \eqref{e.gk1} we then bring forward is that, by reversibility,
\begin{equation}
\label{e.reversible}
\la f, e^{-t \L} f \ra = \la e^{-\frac t 2 \L}f, e^{-\frac{t}{2}\L} f \ra
\end{equation}
so that, after a change of variables,
\begin{align}
\label{e.gk2}
\sigma^2(f) 
&  = 4 \int_0^{+\infty} \la e^{-t \L}f, e^{-t\L} f \ra \, \d t .
%\\
%\label{e.gk3}
%& = 4 \int_0^{+\infty} \E \Ll[ \Ll(\EE_x\Ll[ f \Ll( X \Ll( t \Rr)  \Rr)  \Rr]\Rr)^2 \Rr] \d t ,
\end{align}
The point of this simple modification is that in order to compute $\E[(e^{-t\L} f)^2]$, we need to average over samples of $(e^{-t\L} f)^2$, and for large $t > 0$, each of these samples will be much smaller than realizations of $f e^{-t\L} f$ (by a square factor). For large $t > 0$, we thus need much fewer samples to compute the average on the right side of \eqref{e.reversible} than we do to compute the average on the left side of \eqref{e.reversible}. This is relevant for practical computations since it is usually more difficult to compute samples of $e^{-t\L} f$ when $t$ is large. In other words, we decomposed the problem of computing $\sigma^2(f)$ into a small-scale part ($t \lesssim 1$) which has a contribution of order one and is an average over many samples that are relatively easy to compute, and a large-scale part ($t \gg 1$) which refines the estimate, is computationally more demanding, but requires much fewer samples due to the own smallness of each of these samples. This is what will ultimately allow for the rapid shrinking of the domain of interest in \eqref{e.def.final.hatsigma} as $k$ increases. 

\smallskip

%Many dynamical systems of interest fail to satisfy property~\eqref{e.super.ergo}. We will see however on an example how to leverage on the fact that 
%\begin{equation*}  %\label{e.}
%\frac 1 t \int_0^t \EE_x[f(X(t))] \, \d t \xrightarrow[t \to +\infty]{}  0
%\end{equation*}
%in combination with \eqref{e.gk1}, to obtain similar gains in computational time.

%\smallskip

The second modification to the Green-Kubo formula we consider is meant to replace the integral over time by a sum over a moderate number of terms. The starting point is the resolvent formula, which states that for every $\lambda, \mu > 0$,
\begin{equation}
\label{e.resolvent}
R_\lambda  = R_\mu + (\mu - \lambda) R_\lambda R_\mu, \quad \text{where } R_\lambda := (\lambda  + \L)^{-1}.
\end{equation}
For any given sequence $(\mu_k)_{k \in \N}$ of real numbers in $(0,1]$, we thus apply \eqref{e.resolvent} recursively and obtain, at least formally,
\begin{equation}  
\label{e.sum.resolv}
R_0 = \sum_{k = 0}^{+\infty} \mu_0 \cdots \mu_{k-1} R_{\mu_0} \cdots R_{\mu_k}.
\end{equation}
As will be explained in more details below (see Theorem~\ref{t.pgk}), defining recursively
\begin{equation*}  %\label{e.}
f_{-1} := f \quad \text{and} \quad \forall k \in \N, \quad f_k := \mu_k R_{\mu_{k}} f_{k-1},
\end{equation*}
we can use the formula \eqref{e.sum.resolv} and reversibility to get
\begin{equation}  
\label{e.induc.Poisson}
\la f, \L^{-1} f \ra = \sum_{k = 0}^{+\infty} \mu_k^{-1} \Ll( \la f_{k-1}, f_k \ra + \la f_k, f_k \ra \Rr).
\end{equation}
This is the variant of the Green-Kubo formula that we will use. The choice of the parameters $\mu_k$ can be adjusted according to the problem at hand. Here we mostly have in mind dynamics that relax to equilibrium at a polynomial rate. In this case, it seems appropriate to choose $\mu_k$ to scale to $0$ geometrically, e.g.\ by fixing $\mu_k \equiv 2^{-k}$. While the series is infinite, we will show that the partial sums typically converge to their limit exponentially fast. 

\smallskip

In Section~\ref{s.1notation}, we drop the reversibility assumption and define our notation precisely. We discuss Monte-Carlo methods in Section~\ref{s.mcmc}. We then prove \eqref{e.induc.Poisson} and, under suitable assumptions, give estimates on the rate of convergence of the partial sums to their limit in Section~\ref{s.variants.gk}. We show how to adapt these considerations to discrete-time Markov chains in Section~\ref{s.dynsys}. Finally, in Section~\ref{s.examples}, we discuss briefly and at a heuristic level the comparative advantages of the different methods for interacting particle systems and Langevin dynamics in random potential.

%
%
%
%%%%%%%%%%%%%%%%%%%%%%%%%%%%
%%%%%%%%%%%%%%%%%%%%%%%%%%%%
%
%
%

\section{Notation and assumptions}
\label{s.1notation}

In this section, we drop the reversibility assumption and clarify some of the notation used in the previous section. 

\smallskip

From now on, we only assume that the measure $\P$ is invariant for the Markov process $X$. We still denote by $\L$ the infinitesimal generator of the process, and choose the sign convention so that $(e^{-t\L})_{t \ge 0}$ is the associated semigroup. That is, for every bounded and continuous function $f : \mcl X \to \R$ and every $t \ge 0$, we have
\begin{equation*}  %\label{e.}
(e^{-t \L} f)(x) = \EE_x \Ll[ f(X_t) \Rr] .
\end{equation*}
By Jensen's inequality, for each $t \ge 0$ and $p \in [1,\infty]$, we have that $e^{-t\L}$ is a contraction from $L^p(\P)$ to itself. It thus follows that for each $\lambda > 0$ and $p \in [1,\infty]$, the resolvent 
\begin{equation}  
\label{e.def.R}
R_\lambda  := \int_0^{+\infty} e^{-\lambda t} e^{-t \L} \, \d t
\end{equation}
is a well-defined operator from $L^p(\P)$ to itself, and moreover, $\lambda R_\lambda$ is a contraction from $L^p(\P)$ to itself. One can then verify that $R_\lambda = (\lambda + \L)^{-1}$ and that the resolvent formula \eqref{e.resolvent} holds (see e.g.\ \cite[Theorem~3.16 and (3.7)]{lig1}). In particular, for every $\lambda > 0$ and $\mu > 0$, the resolvents $R_\lambda$ and $R_\mu$ commute. 
We denote by $\L^*$ the adjoint of $\L$ in $L^2(\P)$, and by $(e^{-t\L^*})_{t \ge 0}$ and $(R_\lambda^*)_{\lambda > 0}$ the associated semigroup and resolvent. 
%The operator $\L^*$ is the infinitesimal generator of a Feller process which we denote by $(X^*(t))_{t \ge 0}$ (see \cite[Theorem~3.26]{lig1}), and we use the same notation $\PP_x$ and $\EE_x$  for the probability law and expectation of this process started from $x \in \mcl X$. The law $\P$ is also invariant for this process.

\smallskip

We fix a mean-zero function $f \in L^2(\P)$. In the non-reversible setting, it is more difficult to assert whether the additive functional 
\begin{equation}
\label{e.add.func2}
\frac 1 {\sqrt{t}} \int_{0}^t f(X(s)) \, \d s
\end{equation}
satisfies a central limit theorem. We refer to \cite[Chapter~2]{klo} for several results in this direction. We only record here that under relatively weak conditions, the limit 
\begin{equation}  
\label{e.def.sigma2}
\sigma^2(f) := 2 \lim_{\lambda \to 0} \la f, (\lambda + \L)^{-1} f \ra
\end{equation}
is well-defined, and as $t$ tends to infinity, the random variable in \eqref{e.add.func2} converges in law under $\PP_\P$ to a centered Gaussian of variance $\sigma^2(f)$ (see in particular \cite[Theorem~2.7 and (2.14)]{klo}). 

\smallskip

Part of the difficulty with non-reversible dynamics is that quantities such as $\la f, (\lambda + \L)^{-1} f \ra$ or $\la f, e^{-t\L} f \ra$ are no longer necessarily nonnegative. Since the emphasis of the present paper is on numerical methods for the computation of~$\sigma^2(f)$ rather than on subtleties that may arise in bordeline cases, we make the simplifying assumption that
\begin{equation}  
\label{e.hyp.int}
\int_0^{+\infty} \big| \la f, e^{-t \L} f \ra \big| \, \d t < \infty.
\end{equation}
Under this assumption, the existence and finiteness of the limit in the definition of $\sigma^2(f)$, see \eqref{e.def.sigma2}, are easily shown using \eqref{e.def.R}. The validity of the classical Green-Kubo formula~\eqref{e.gk1} is obtained similarly.

\smallskip

Whenever we want to go beyond the derivation of identities and want to discuss rates of convergence, we will assume that there exists $\al > 1$ such that for every $t \ge 0$,
\begin{equation}  
\label{e.assump.decay}
\big|\la f, e^{-t\L} f \ra\big| \le (1+t)^{-\alpha}.
\end{equation}
Note that a possible multiplicative constant in this inequality could be absorbed after a redefinition of $f$. We will indicate explicitly whenever \eqref{e.hyp.int} or \eqref{e.assump.decay} is assumed. 

%The non-reversible counterpart to the formula \eqref{e.gk2} is
%\begin{align}
%\label{e.gk2.irr}
%\sigma^2(f) & = 4 \int_0^{+\infty} \la e^{-t \L^*} f, e^{-t \L} f \ra \, \d t. %\\
%%\label{e.gk3.irr}
%%& = 4 \int_0^{+\infty} \E \big[ \EE_x\Ll[f(X^*(t))\Rr] \EE_x\Ll[f(X(t))\Rr]  %\big]  \, \d t.
%\end{align}
%The advantages we can expect when devising numerical approximations of $\sigma^2(f)$ based on \eqref{e.gk2.irr} are similar to those discussed for \eqref{e.gk2}, and we do not repeat them here. 

%
%
%
%%%%%%%%%%%%%%%%%%%%%%%%%%%%
%%%%%%%%%%%%%%%%%%%%%%%%%%%%
%
%
%

\section{Monte-Carlo methods}
\label{s.mcmc}

In this section, we show that a single-step extrapolation of the naive Monte-Carlo estimator for $\sigma^2(f)$ allows for a significant improvement in computational complexity. In addition to the quantitative assumption \eqref{e.assump.decay}, we also assume that
\begin{equation}
\label{e.fourth}
\sup_{t \ge 1} \EE_\P \Ll[ \Ll( \frac 1 {\sqrt{t}} \int_0^t f(X(s)) \, \d s \Rr)^4  \Rr] \le 1.
\end{equation}
The assumption of \eqref{e.hyp.int} implies that the supremum on the left side of \eqref{e.fourth}, with the fourth power replaced by the second power, is finite. The assumption of finiteness of the fourth moment in \eqref{e.fourth} (which can then normalized to be of unit size, up to a redefinition of $f$) will be used to control the fluctuating part of the Monte-Carlo estimator. 

\smallskip

In order to describe MCMC methods, we assume that we are given a family $X^{(0)}, X^{(1)}, \ldots$ of processes that are independent and are each distributed according to the law $\PP_\P$ (that is, the law of $X$ started from the measure $\P$). We denote their joint law by $\PP_\P^{\otimes}$ (expectation $\EE_\P^\otimes$). The next proposition quantifies the accuracy of the naive Monte-Carlo estimator. 

\begin{proposition}[naive MCMC method]
\label{p.naive}
For each $\al > 1$, there exists a constant $C < \infty$ such that the following holds. Let $f \in L^2(\P)$ satisfy \eqref{e.assump.decay} and~\eqref{e.fourth}, and for each $t \ge 2$ and positive integer $N$, let
\begin{equation}  
\label{e.def.hat0}
\hat{\sigma}_0^2(N,t,f) := \frac 1 N \sum_{k = 1}^{N} \Ll(\frac 1 {\sqrt{t}}\int_0^t f(X^{(k)}(s)) \, \d s\Rr)^2.
\end{equation}
We have
\begin{equation}  
\label{e.naive}
\EE_\P^{\otimes} \Ll[ \Ll( \hat{\sigma}_0^2(N,t,f) - \sigma^2(f) \Rr) ^2 \Rr] ^\frac 1 2 \le C 
\Ll|
\begin{array}{ll}
N^{-\frac 1 2} + t^{-(\al -1)} & \quad \text{if } \al < 2, \\
N^{-\frac 1 2} + t^{-1} \log(t) & \quad \text{if } \al = 2, \\
N^{-\frac 1 2} + t^{-1}  & \quad \text{if } \al > 2.
\end{array}
\Rr.
\end{equation}
\end{proposition}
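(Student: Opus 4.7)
The plan is a standard bias--variance decomposition of the mean square error. Set
\begin{equation*}
Y_k := \Ll( \frac{1}{\sqrt{t}} \int_0^t f(X^{(k)}(s))\,\d s \Rr)^2, \qquad \mu_t := \EE_\P[Y_1],
\end{equation*}
so that $\hat\sigma_0^2(N,t,f)$ is the empirical mean of $N$ i.i.d.\ copies of $Y_1$ under $\PP_\P^\otimes$. By the triangle inequality,
\begin{equation*}
\EE_\P^\otimes\Ll[ \Ll(\hat\sigma_0^2(N,t,f) - \sigma^2(f)\Rr)^2 \Rr]^{\frac 1 2} \le \sqrt{\tfrac{1}{N}\var(Y_1)} + |\mu_t - \sigma^2(f)|.
\end{equation*}
The first term is bounded by $N^{-1/2}$ directly from the fourth moment assumption \eqref{e.fourth}, which yields $\var(Y_1) \le \EE_\P[Y_1^2] \le 1$.

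All of the work is therefore in the bias term $|\mu_t - \sigma^2(f)|$. I would first use stationarity of $\P$ under the semigroup: for any $s > u$, conditioning at time $u$ gives $\EE_\P[f(X(s))f(X(u))] = \la f, e^{-(s-u)\L} f \ra$. Expanding the square in $\mu_t$, applying Fubini, and substituting $r = s-u$ yields the triangular-kernel representation
\begin{equation*}
\mu_t = \frac{2}{t}\int_0^t (t-r)\,\la f, e^{-r\L}f\ra \,\d r.
\end{equation*}
Subtracting the Green--Kubo identity $\sigma^2(f) = 2 \int_0^{+\infty} \la f, e^{-r\L}f\ra\,\d r$, whose validity under \eqref{e.hyp.int} is noted in the text, gives
\begin{equation*}
\mu_t - \sigma^2(f) = -\,2\int_t^{+\infty} \la f, e^{-r\L}f\ra \,\d r \;-\; \frac{2}{t}\int_0^t r\,\la f, e^{-r\L}f\ra\,\d r.
\end{equation*}

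From here the decay hypothesis \eqref{e.assump.decay} controls both pieces. The tail integral contributes at most a constant times $t^{-(\al-1)}$. The truncated first moment $\int_0^t r(1+r)^{-\al}\,\d r$ is bounded by a constant when $\al > 2$, behaves like $\log t$ when $\al = 2$, and like $t^{2-\al}$ when $1 < \al < 2$; dividing by $t$ reproduces the three regimes in the statement exactly. Combining the bias and fluctuation bounds gives \eqref{e.naive}. There is no real obstacle here: the step to keep an eye on is the casework in the bias, where the three regimes must be handled with uniform constants depending only on $\al$, but this reduces to an elementary computation once the triangular-kernel identity for $\mu_t$ is in hand.
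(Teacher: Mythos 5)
Your proof is correct and follows essentially the same route as the paper: bias--variance decomposition, the triangular-kernel identity for $\mu_t$ obtained from stationarity, subtraction of the Green--Kubo formula, and the same three-case elementary integral estimate driven by \eqref{e.assump.decay}. The only cosmetic difference is that you invoke the $L^2$ triangle inequality where the paper records the exact variance-plus-bias-squared identity, but these lead to the same bound.
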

\begin{proof}
We decompose the left side of \eqref{e.naive} into a variance and a bias terms:
\begin{multline}  
\label{e.varbias}
\EE_\P^{\otimes} \Ll[ \Ll( \hat{\sigma}_0^2(N,t,f) - \sigma^2(f) \Rr) ^2 \Rr] \\
= \EE_\P^{\otimes}  \Ll[ \Ll( \hat{\sigma}_0^2(N,t,f) - \EE_\P^{\otimes} \Ll[\hat{\sigma}_0^2(N,t,f) \Rr]\Rr)^2 \Rr] + |\EE_\P^{\otimes} \Ll[\hat{\sigma}_0^2(N,t,f) \Rr] - \sigma^2(f)|^2.
\end{multline}
By \eqref{e.fourth}, it is clear that the first term on the right side above is bounded by~$N^{-1}$. By stationarity, the expectation appearing in the second term can be rewritten as
\begin{align}  %\label{e.}
\notag
\EE_\P\Ll[\Ll(\frac 1 {\sqrt t} \int_0^{t} f(X(s)) \, \d s\Rr)^2 \Rr] & = \frac{2}{t} \int_{0 \le u < v \le t} \EE_\P \Ll[ f(X(u)) f(X(v)) \Rr]  \, \d u \, \d v \\
\notag
& = \frac{2}{t}\int_{0 \le u < v \le t} \EE_\P \Ll[ f(X(0)) f(X(v-u)) \Rr]   \, \d u \, \d v  \\
\label{e.naivebias}
& = \frac{2}{t} \int_0^t (t-s) \la f, e^{-s \L} f  \ra \, \d s.
\end{align}
Recalling the formula \eqref{e.gk1} for $\sigma^2(f)$ and using \eqref{e.assump.decay}, we can bound the difference between the integral above and $\sigma^2(f)$ by
\begin{equation*}  %\label{e.}
\frac 2 t \int_0^t s (1+s)^{-\al} \, \d s + 2 \int_t^{+\infty} (1+s)^{-\al} \, \d s \le 
C \Ll|
\begin{array}{ll}
t^{-(\al -1)} & \quad \text{if } \al < 2, \\
t^{-1} \log(t) & \quad \text{if } \al  = 2, \\
t^{-1} & \quad \text{if } \al > 2.
\end{array}
\Rr.
\end{equation*}
This completes the proof.
\end{proof}

The next proposition, which is the main result of this section, shows that a simple one-step extrapolation of the naive MCMC estimator yields an improved convergence rate in most cases.

\begin{proposition}[Improved MCMC method]
\label{p.improved}
For each $\al > 1$, there exists a constant $C < \infty$ such that the following holds. Let $f \in L^2(\P)$ satisfy \eqref{e.assump.decay} and~\eqref{e.fourth}, and for each $t \ge 2$ and positive integer $N$, let 
\begin{equation*}  
\hat{\sigma}_1^2(N,t,f) := 2\hat{\sigma}_0^2(N,2t,f) - \hat{\sigma}_0^2(N,t,f),
\end{equation*}
where $\hat{\sigma}_0^2(N,t,f)$ is defined in \eqref{e.def.hat0}. We have
\begin{equation*}  
%\label{e.naive}
\EE_\P^{\otimes} \Ll[ \Ll( \hat{\sigma}_1^2(N,t,f) - \sigma^2(f) \Rr) ^2 \Rr] ^\frac 1 2 \le C \Ll(N^{-\frac 1 2} + t^{-(\al - 1)}\Rr).
\end{equation*}
\end{proposition}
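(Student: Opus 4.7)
The plan is to mimic the bias-variance decomposition used in the proof of Proposition~\ref{p.naive}, and then to exploit cancellations between the two bias terms produced by the one-step extrapolation. Concretely, I would write
\begin{equation*}
\EE_\P^{\otimes}\Ll[\Ll(\hat{\sigma}_1^2(N,t,f) - \sigma^2(f)\Rr)^2\Rr] = \var(\hat{\sigma}_1^2(N,t,f)) + \Ll|\EE_\P^{\otimes}[\hat{\sigma}_1^2(N,t,f)] - \sigma^2(f)\Rr|^2,
\end{equation*}
and treat each piece separately.

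For the variance, I would observe that $\hat{\sigma}_1^2(N,t,f) = 2\hat\sigma_0^2(N,2t,f) - \hat\sigma_0^2(N,t,f)$ is a linear combination (with $t$-independent coefficients) of two empirical means of quantities with uniformly bounded second moment, by the assumption \eqref{e.fourth}. Therefore $\var(\hat\sigma_1^2(N,t,f)) \le C N^{-1}$, contributing the $N^{-1/2}$ term in the desired bound. (One can either use a single family of trajectories of length $2t$ to build both $\hat\sigma_0^2(N,t,f)$ and $\hat\sigma_0^2(N,2t,f)$, or independent families; either way the variance bound is immediate.)

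The heart of the argument lies in the bias. Writing $g(s) := \la f, e^{-s\L}f\ra$, the computation~\eqref{e.naivebias} yields
\begin{equation*}
\EE_\P^{\otimes}[\hat{\sigma}_0^2(N,t,f)] - \sigma^2(f) = -2\int_t^{+\infty} g(s)\,\d s - \frac{2}{t}\int_0^t s\, g(s)\,\d s =: b(t).
\end{equation*}
A direct computation then gives
\begin{equation*}
2b(2t) - b(t) = 2\int_t^{2t} g(s)\Ll(1 - \frac{s}{t}\Rr)\d s - 2\int_{2t}^{+\infty} g(s)\,\d s,
\end{equation*}
where the key cancellation is that the $\frac{1}{t}\int_0^t s\,g(s)\,\d s$ term, which decays only like $t^{-1}$ and dictates the rate in Proposition~\ref{p.naive} when $\al > 2$, is entirely cancelled.

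It then remains to bound these two residual terms using the decay assumption~\eqref{e.assump.decay}. For $s \in [t,2t]$ we have $|1-s/t| \le 1$, so the first integral is bounded by $\int_t^{2t}(1+s)^{-\al}\,\d s$, and the second is $\int_{2t}^{+\infty}(1+s)^{-\al}\,\d s$; both are $O(t^{-(\al-1)})$ since $\al > 1$, giving the claimed bias bound $|2b(2t)-b(t)| \le C t^{-(\al-1)}$ uniformly in $\al$. Combining this with the variance bound yields the proposition. There is no serious obstacle here — the proof is essentially just the algebraic identification of the cancellation produced by the extrapolation — but the step worth writing out carefully is the explicit computation of $2b(2t)-b(t)$, since it is precisely this identity that shows why a single extrapolation removes the dominant $t^{-1}$ bias term that plagued the naive estimator in the regime $\al \ge 2$.
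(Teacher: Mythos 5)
Your proof is correct and takes essentially the same route as the paper: bias–variance decomposition, the $O(N^{-1/2})$ variance bound via \eqref{e.fourth} exactly as in Proposition~\ref{p.naive}, and then an explicit algebraic simplification of the extrapolated bias showing that the $t^{-1}\int_0^t s\,g(s)\,\d s$ contribution cancels, leaving only tail integrals of size $O(t^{-(\al-1)})$. The paper organizes the bias computation slightly differently—writing the weight $2(1-\frac{s}{2t})_+ - (1-\frac{s}{t})_+ - 1$ directly from \eqref{e.naivebias} and simplifying it to $[(\frac{s}{t}-1)\wedge 1]\mathbf{1}_{\{s\ge t\}}$—but your expression $2\int_t^{2t}(1-\frac{s}{t})g(s)\,\d s - 2\int_{2t}^\infty g(s)\,\d s$ is the negative of the same integral, so the two computations are identical in substance.
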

\begin{proof}
We use the variance-bias decomposition \eqref{e.varbias} (with $\hat{\sigma}_0^2$ replaced by $\hat{\sigma}_1^2$) and estimate the variance as in the proof of Proposition~\ref{p.naive}. There remains to estimate the bias term, which takes the form
\begin{equation*}  %\label{e.}
\Ll|2\EE_\P\Ll[\Ll(\frac 1 {\sqrt {2 t}} \int_0^{2t} f(X(s)) \, \d s\Rr)^2 \Rr] - \EE_\P\Ll[\Ll(\frac 1 {\sqrt t} \int_0^{t} f(X(s)) \, \d s\Rr)^2 \Rr] - \sigma^2(f) \Rr|.
\end{equation*}
In view of \eqref{e.gk1} and \eqref{e.naivebias}, we can rewrite this quantity as
\begin{align*}  %\label{e.}
2\Ll|  \int_0^{+\infty} \Ll[2\Ll( 1 - \frac s {2t} \Rr)_+ - \Ll( 1 - \frac s t \Rr)_+ - 1 \Rr] \la f, e^{-s\L} f \ra \, \d s \Rr|,
\end{align*}
where we denote $x_+ = x \vee 0 = \max(x,0)$ for the positive part of $x$. We also write $x \wedge y = \min(x,y)$. 
Simplifying and using~\eqref{e.assump.decay}, we get that the expression above is
\begin{align}  
\label{e.extrap}
2\Ll| \int_t^{+\infty} \Ll[\Ll(\frac s t-1  \Rr) \wedge 1\Rr] \la f, e^{-s\L} f \ra \, \d s\Rr| \le 2 \int_t^{+\infty} s^{-\al} \, \d s \le C t^{-(\al - 1)},
\end{align}
as announced.
\end{proof}
The one-step extrapolated scheme described in Proposition~\ref{p.improved} is therefore more efficient than the naive scheme described in Proposition~\ref{p.naive} whenever $\al \ge 2$. To make this point more precise, we may measure the computational complexity of each method in terms of its ``total simulated time'': if an algorithm samples $n$ trajectories, each respectively up to time $t_1, \ldots, t_n$, then the total simulated time of the algorithm is $\sum_{i = 1}^n t_i$. For any fixed $\al > 2$, we compute the total simulated time required for each method to produce an output of the order of $\de \in (0, 1]$ away from $\sigma^2(f)$. For the naive method, we need to choose $N \simeq \de^{-2}$ and $t \simeq \de^{-1}$, so that the total simulated time is of the order of $\de^{-3}$. For the one-step extrapolated method, we choose instead $t \simeq \de^{-\frac 1 {\al  - 1}}$, still with $N \simeq \de^{-2}$, so that the total simulated time is of the order of $
\de^{-\Ll( 2 + \frac 1 {\al - 1} \Rr) } \ll \de^{-3}$. If we replace the polynomial convergence to equilibrium in \eqref{e.assump.decay} by an exponential one, then one can show that the one-step extrapolated method requires only of the order of $\de^{-2} \log(\de^{-1})$ of simulated time, while the naive method still requires a total simulated time of the order of $\de^{-3}$. It is striking that a single-step extrapolation suffices to get rid of ``saturation'' phenomena as seen in Proposition~\ref{p.naive} at all orders at once.

%
%
%
%%%%%%%%%%%%%%%%%%%%%%%%%%%%
%%%%%%%%%%%%%%%%%%%%%%%%%%%%
%
%
%

\section{Variants of the Green-Kubo formula}
\label{s.variants.gk}

In this section, we prove the non-reversible version of the variant of the Green-Kubo formula presented in \eqref{e.induc.Poisson}. We then discuss the convergence rate of the series under the additional assumption of \eqref{e.assump.decay}.

\begin{theorem}
\label{t.pgk}
Let $f \in L^2(\P)$ be such that \eqref{e.hyp.int} holds, let $(\mu_k)_{k \in \N}$ be a sequence of real numbers in $(0,1]$, and define recursively
\begin{equation}  
\label{e.def.fk}
\Ll\{
\begin{aligned} 
& f_{-1} = f^*_{-1} := f, \\
& \forall k \in \N, \quad f_{k} := \mu_{k} (\mu_k + \L)^{-1} f_{k-1}  \quad \text{and} \quad f^*_{k} := \mu_{k} (\mu_k + \L^*)^{-1} f^*_{k-1} .
\end{aligned}
\Rr.
\end{equation}
For every $n \in \N$, the quantity
\begin{equation}  
\label{e.def.rest}
\la f_n^*, \L^{-1} f_n \ra := \lim_{\lambda \to 0} \la f_n^*, (\lambda + \L)^{-1} f_n \ra
\end{equation}
is well-defined and finite, and 
\begin{equation}  
\label{e.lim.rest}
\lim_{n \to \infty} \la f_n^*, \L^{-1} f_n \ra = 0.
\end{equation}
Moreover, for every $n \in \N$, we have
\begin{equation}  
\label{e.pgk}
\la f, \L^{-1} f \ra = \sum_{k = 0}^{n} \mu_{k}^{-1} \Ll(\la f^*_{k-1}, f_k \ra + \la f^*_k, f_k \ra\Rr) + \la f_n^*, \L^{-1} f_n \ra.
\end{equation}
\end{theorem}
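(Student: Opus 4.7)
The plan is to establish, for each $n \ge 0$, the one-step telescoping identity
\begin{equation*}
\la f_n^*, \L^{-1} f_n \ra = \la f_{n-1}^*, \L^{-1} f_{n-1} \ra - \mu_n^{-1}\bigl(\la f_{n-1}^*, f_n \ra + \la f_n^*, f_n \ra\bigr),
\end{equation*}
from which \eqref{e.pgk} follows by summation from $k=0$ to $k=n$, using $f_{-1} = f_{-1}^* = f$. At the operator level, this reflects $\mu_n^2 R_0 R_{\mu_n}^2 = R_0 - R_{\mu_n} - \mu_n R_{\mu_n}^2$, obtained by applying the resolvent formula $R_0 R_\mu = \mu^{-1}(R_0 - R_\mu)$ twice and invoking the commutativity of $R_0$ and $R_\mu$ as functions of $\L$. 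Since $R_0$ is defined only as a limit, I would first carry out the analogous computation at positive $\lambda$ using $(\mu_n - \lambda)^2 R_\lambda R_{\mu_n}^2 = R_\lambda - R_{\mu_n} - (\mu_n - \lambda) R_{\mu_n}^2$, and then pass to $\lambda \to 0^+$ after establishing that the limit in \eqref{e.def.rest} exists and is finite.

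To prove this existence, set $\phi_n(t) := |\la f_n^*, e^{-t\L} f_n \ra|$. Using the commutation of $R_{\mu_n}$ and $e^{-t\L}$ together with the representation $R_{\mu_n}^2 = \int_0^{+\infty} v e^{-\mu_n v} e^{-v\L} \, dv$ (obtained either by differentiating $R_\mu$ in $\mu$, or by Fubini on $R_{\mu_n} \cdot R_{\mu_n}$), one gets
\begin{equation*}
\la f_n^*, e^{-t\L} f_n \ra = \mu_n^2 \int_0^{+\infty} v e^{-\mu_n v} \la f_{n-1}^*, e^{-(t+v)\L} f_{n-1} \ra \, dv,
\end{equation*}
and therefore $\phi_n(t) \le \EE[\phi_{n-1}(t + V_n)]$, where $V_n$ is an independent random variable with density $\mu_n^2 v e^{-\mu_n v}$ on $(0,+\infty)$, i.e.\ $V_n \sim \Gamma(2,\mu_n)$. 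Iterating down to $\phi_{-1}$ yields the key representation $\phi_n(t) \le \EE[\phi_{-1}(t+S_n)]$ with $S_n := V_0 + \cdots + V_n$ and the $V_k$ mutually independent. Integrating over $t$ and applying Fubini gives
\begin{equation*}
\int_0^{+\infty} \phi_n(t) \, dt \le \EE[\Phi(S_n)], \qquad \Phi(v) := \int_v^{+\infty} \phi_{-1}(s) \, ds,
\end{equation*}
with $\Phi(0) < +\infty$ by \eqref{e.hyp.int}. Dominated convergence then lets us pass to the limit in $\la f_n^*, R_\lambda f_n \ra = \int_0^{+\infty} e^{-\lambda t} \la f_n^*, e^{-t\L} f_n \ra \, dt$ as $\lambda \to 0^+$, which both defines \eqref{e.def.rest} and justifies passing to the limit in the $\lambda$-version of the telescoping identity to obtain \eqref{e.pgk}.

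For \eqref{e.lim.rest}, observe that $\Phi$ is nonincreasing, bounded by $\Phi(0) < +\infty$, and tends to $0$ at infinity. Since $\mu_k \le 1$, each $V_k$ stochastically dominates a $\Gamma(2,1)$ random variable, so $S_n$ stochastically dominates a sum of $n+1$ i.i.d.\ such variables and thus $S_n \to +\infty$ almost surely as $n \to \infty$. Bounded convergence then gives $\EE[\Phi(S_n)] \to 0$, which combined with $|\la f_n^*, \L^{-1} f_n \ra| \le \int_0^{+\infty} \phi_n(t) \, dt \le \EE[\Phi(S_n)]$ yields \eqref{e.lim.rest}. The algebraic heart of the argument is the telescoping identity, which is short once the resolvent formula is applied twice; the main technical obstacle lies in the second paragraph, since in the non-reversible setting $\la f_n^*, e^{-t\L} f_n \ra$ has no a priori sign, forcing the use of the absolute value $\phi_n$ and of the probabilistic representation via the convolution $S_n$ to exploit \eqref{e.hyp.int}.
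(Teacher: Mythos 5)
Your proposal is correct, and it uses the same ingredients as the paper: the resolvent formula, a probabilistic representation of $R_{\mu_k}$ via auxiliary random times, and a dominated-convergence/monotonicity argument to pass $\lambda \to 0$ and $n \to \infty$. The one genuine organizational difference is that you derive a clean one-step telescoping identity
$\la f_n^*, \L^{-1} f_n \ra = \la f_{n-1}^*, \L^{-1} f_{n-1} \ra - \mu_n^{-1}\bigl(\la f_{n-1}^*, f_n \ra + \la f_n^*, f_n \ra\bigr)$
from $(\mu-\lambda)^2 R_\lambda R_\mu^2 = R_\lambda - R_\mu - (\mu-\lambda)R_\mu^2$ and sum, while the paper instead iterates the resolvent identity directly against the ``doubled'' sequence $(\mu_0,\mu_0,\mu_1,\mu_1,\ldots)$ to reach the full decomposition \eqref{e.decomp.lambda} in one go before taking $\lambda \to 0$. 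Your telescoping reformulation makes the inductive structure and the limit-passage more transparent. Likewise, replacing the paper's pair of independent sums $S_n, S_n'$ of exponentials by a single $S_n$ with $V_k \sim \Gamma(2,\mu_k)$ summands is an equivalent bookkeeping device — $\Gamma(2,\mu_k)$ is exactly the sum of two independent $\mathrm{Exp}(\mu_k)$'s, so the representations have identical laws and the stochastic-domination/almost-sure divergence argument is the same. In short: correct, same method, slightly cleaner packaging of the algebraic step.
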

\begin{proof}
Let $\lambda > 0$. From the formula \eqref{e.def.R}, we see that if $T_\lambda$ is an exponential random variable of parameter $\lambda$, independent of any other quantity in the problem, then we can represent the resolvent operator $R_\lambda$ as
\begin{equation*}  %\label{e.}
\lambda R_\lambda g = E \Ll[ e^{-T_\lambda \L} g \Rr] ,
\end{equation*}
where $E$ denotes the expectation over the random variable $T_\lambda$. By induction, we see that if $S_n$ has the law of a sum of $(n+1)$ independent exponential random variables of parameter $\mu_0,\ldots, \mu_n$ respectively, then
\begin{equation*}  %\label{e.}
f_n = E \Ll[ e^{-S_n \L} f \Rr].
\end{equation*}
Here again the random variable $S_n$ is independent of any other quantity in the problem, and $E$ computes the average of this random variable. Denoting by $S_n'$ an independent copy of $S_n$, we have
\begin{align*}  %\label{e.}
\la f_n^*, (\lambda + \L)^{-1} f_n \ra & = \int_0^{+\infty} e^{-\lambda t} E \Ll[ \la e^{-S_n' \L^*} f, e^{-(S_n + t) \L} f \ra \Rr]  \, \d t \\
& = \int_0^{+\infty} e^{-\lambda t} E \Ll[ \la f, e^{-(S_n + S_n' + t)\L} f \ra \Rr] \, \d t.
\end{align*}
The assumption of \eqref{e.hyp.int} ensures that for each fixed $s \ge 0$,
\begin{equation*}  %\label{e.}
\int_0^{+\infty} \Ll| \la f, e^{-(s+t)\L} f \ra \Rr| \, \d t \le \int_0^{+\infty}  \Ll| \la f, e^{-t\L} f \ra \Rr| \, \d t < +\infty.
\end{equation*}
Hence, the quantity
\begin{equation*}  %\label{e.}
\int_0^{+\infty} e^{-\lambda t} \la f,e^{-(s+t)\L} f \ra \, \d t
\end{equation*}
is bounded uniformly over $s \ge 0$ and $\lambda > 0$, and converges to
\begin{equation*}  %\label{e.}
\int_0^{+\infty} \la f, e^{-(s+t) \L} f \ra \, \d t
\end{equation*}
as $\lambda$ tends to $0$. By the dominated convergence theorem, this implies that 
\begin{equation*}  %\label{e.}
\la f_n^*, (\lambda + \L)^{-1} f_n \ra
\end{equation*}
converges as $\lambda$ tends to infinity to 
\begin{equation*}  %\label{e.}
\int_0^{+\infty} E \Ll[ \la f, e^{-(S_n + S_n' + t)\L} f \ra \Rr] \, \d t \in \R.
\end{equation*}
We now verify that the integral above tends to $0$ as $n$ tends to infinity. It is clear that we can realize the sequences $S_n, S_n'$ such that $S_{n+1} \ge S_n$ and $S'_{n+1} \ge S_n'$, and moreover, since the sequence $(\mu_k)$ is bounded by $1$, we have that $S_n$ and $S_n'$ tend to infinity almost surely. For each $M \ge 1$, we can bound the absolute value of the integral above by
\begin{equation}  
\label{e.easy.decomp}
P \Ll[ S_n + S_n' \le M \Rr] \int_0^{+\infty} \Ll| \la f, e^{-t\L} f \ra \Rr| \, \d t + \int_M^{+\infty} \Ll| \la f, e^{-t\L} f \ra \Rr| \, \d t.
\end{equation}
By the assumption of \eqref{e.hyp.int}, we may fix $M$ sufficiently large to make the second integral in \eqref{e.easy.decomp} as small as desired. Then taking $n$ sufficiently large ensures that the probability on the left of \eqref{e.easy.decomp} is arbitarily small. This completes the proof of \eqref{e.lim.rest}.

\smallskip

There remains to show \eqref{e.pgk}. For each $\lambda > 0$, we introduce the shorthand notation
\begin{equation*}  %\label{e.}
\mu_k^\lambda := \mu_k-\lambda.
\end{equation*}
Applying the resolvent formula \eqref{e.resolvent} recursively, we see that
\begin{equation}  
\label{e.induc.resolv}
R_\lambda = \Ll(\sum_{k = 0}^n \mu^\lambda_0 \cdots \mu^\lambda_{k-1} R_{\mu_0} \cdots R_{\mu_k}\Rr) + \mu^\lambda_0\cdots \mu^\lambda_n \, R_{\mu_0} \cdots R_{\mu_n} R_\lambda.
\end{equation}
We define recursively
\begin{equation*}  %\label{e.}
f_{-1}^{\lambda} = f_{-1}^{\lambda, *} := f,
\end{equation*}
\begin{equation*}  %\label{e.}
\forall k \in \N, \quad f^{\lambda}_{k} := \mu^\lambda_{k} R_{\mu_k} f^\lambda_{k-1}  \quad \text{and} \quad f^{\lambda,*}_{k} := \mu^\lambda_{k} R_{\mu_k}^* f^{\lambda,*}_{k-1} .
\end{equation*}
Using the formula \eqref{e.induc.resolv} with the sequence $(\mu_k)_{k \in \N}$ replaced by $(\mu_0, \mu_0, \mu_1, \mu_1, \ldots)$, we get that
\begin{equation}  
\label{e.decomp.lambda}
\la f, R_\lambda f \ra = \sum_{k = 0}^n \frac{1}{\mu^\lambda_k} \Ll( \la f^{\lambda, *}_{k-1}, f^\lambda_k \ra  + \la f^{\lambda, *}_k, f^\lambda_k\ra \Rr)  + \la f^{\lambda, *}_n, R_\lambda f^\lambda_n \ra,
\end{equation}
where for simplicity, we may assume that $\lambda > 0$ is sufficiently small that $\mu_k^\lambda \neq 0$ for each $k \le n$. Note that $f_n^\lambda$ differs from $f_n$ by a scalar factor only, and this factor tends to $1$ as $\lambda$ tends to $0$; and similarly for $f_n^{\lambda, *}$. It is therefore clear that
\begin{equation*}  %\label{e.}
\lim_{\lambda \to 0} \la f^{\lambda, *}_n, R_\lambda f^\lambda_n \ra = \la f^*_n, \L^{-1} f_n \ra.
\end{equation*}
Passing to the limit $\lambda \to 0$ in the sum on the right side of \eqref{e.decomp.lambda} is similar (only simpler), and we obtain \eqref{e.pgk}.
\end{proof}

We now give some simple criteria for evaluating the size of the remainder term in the decomposition~\ref{e.pgk}. This will depend on the sequence $(\mu_k)$ we choose. The simplest case is when we choose this sequence to be constant.
\begin{proposition}[Remainder estimate, constant step size]
\label{p.constant.step}
For each $\al > 1$, there exists a constant $C < \infty$ such that the following holds. Let $f \in L^2(\P)$ satisfy \eqref{e.assump.decay}, and let $(f_k)_{k \in \N}$, $(f^*_k)_{k \in \N}$ be defined according to \eqref{e.def.fk} with the choice $\mu_k \equiv 1$. For every $n \in \N$, we have
\begin{equation}  
\label{e.constant.step}
|\la f_n^*, \L^{-1} f_n \ra | \le C (1+n)^{-(\al-1)}.
\end{equation}
\end{proposition}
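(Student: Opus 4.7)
The plan is to use the probabilistic representation of $f_n$ and $f_n^*$ that was already set up inside the proof of Theorem~\ref{t.pgk}, combine it with Fubini and the decay assumption \eqref{e.assump.decay}, and then reduce the estimate to a moment bound on a Gamma random variable.

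First I would recall from the proof of Theorem~\ref{t.pgk} that, with $\mu_k \equiv 1$, one has $f_n = E[e^{-S_n \L} f]$ and $f_n^* = E[e^{-S_n' \L^*} f]$ where $S_n$ and $S_n'$ are independent $\mathrm{Gamma}(n+1, 1)$ random variables (sums of $n+1$ i.i.d.\ unit exponentials), and that
\begin{equation*}
\la f_n^*, \L^{-1} f_n \ra = \int_0^{+\infty} E\Ll[ \la f, e^{-(S_n + S_n' + t) \L} f \ra \Rr] \, \d t.
\end{equation*}
Pulling the absolute value inside the expectation and applying Fubini (both justified by \eqref{e.hyp.int}, which is implied by \eqref{e.assump.decay}), a change of variables $u = S_n + S_n' + t$ combined with \eqref{e.assump.decay} gives
\begin{equation*}
\Ll| \la f_n^*, \L^{-1} f_n \ra \Rr| \le E\Ll[ \int_{S_n + S_n'}^{+\infty} (1+u)^{-\al} \, \d u \Rr] = \frac{1}{\al - 1}\, E\Ll[ (1 + S_n + S_n')^{-(\al - 1)} \Rr].
\end{equation*}

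Writing $Y_n := S_n + S_n'$, it remains to show $E[(1+Y_n)^{-(\al - 1)}] \le C(1+n)^{-(\al - 1)}$. The natural split is over $\{Y_n \ge n+1\}$ and its complement: on the former event $(1+Y_n)^{-(\al - 1)} \le (n+2)^{-(\al - 1)}$, while on the latter we use the trivial bound $(1+Y_n)^{-(\al - 1)} \le 1$ together with a Chernoff estimate for the left tail of $Y_n$. Since $Y_n$ is $\mathrm{Gamma}(2(n+1), 1)$, its moment generating function at $-\lambda$ equals $(1+\lambda)^{-2(n+1)}$, so for any $\lambda > 0$,
\begin{equation*}
P\Ll[ Y_n < n+1 \Rr] \le e^{\lambda(n+1)} (1+\lambda)^{-2(n+1)},
\end{equation*}
and choosing $\lambda = 1$ yields $P[Y_n < n+1] \le (e/4)^{n+1}$, which is exponentially small in $n$ and therefore dominated by $C(1+n)^{-(\al-1)}$.

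Combining the two contributions gives $E[(1+Y_n)^{-(\al - 1)}] \le C(1+n)^{-(\al - 1)}$, which yields \eqref{e.constant.step}. The only step that requires any care is the Fubini/dominated-convergence argument, but this is already established in the proof of Theorem~\ref{t.pgk}; beyond that, the estimate is a straightforward Chernoff/moment computation for Gamma variables, so I do not expect a serious obstacle.
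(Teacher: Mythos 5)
Your proof is correct and uses essentially the same strategy as the paper: the probabilistic representation $\la f_n^*, \L^{-1} f_n \ra = \int_0^\infty E[\la f, e^{-(t+S_n+S_n')\L} f \ra]\,\d t$, the decay assumption \eqref{e.assump.decay}, and a large-deviation (Chernoff) bound on the left tail of the Gamma variable $S_n+S_n'$. The only cosmetic difference is the order of operations — you integrate out $t$ first and then split the expectation $E[(1+S_n+S_n')^{-(\al-1)}]$ over $\{S_n+S_n' \ge n+1\}$ and its complement, whereas the paper splits first and then integrates — but this is the same estimate.
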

\begin{proof}
Denote by $S_n$ and $S_n'$ two idependent random variables, each having the law of a sum of $n$ independent exponential random variables of parameter $1$. We assume that these random variables are also independent of any other quantity discussed so far, and denote by $E$ the expectation with respect to these random variables. We have
\begin{equation*}  %\label{e.}
f_n = E \Ll[ e^{-S_n \L} f \Rr]  \quad \text{and} \quad f_n^* = E \Ll[ e^{-S_n' \L^*} f \Rr]  .
\end{equation*}
Moreover,
\begin{align*}  %\label{e.}
\la f_n^*, \L^{-1} f_n \ra & = \int_0^{+\infty} \la f^*_n, e^{-t\L} f_n \ra \, \d t \\
& = \int_0^{+\infty} E \Ll[ \la e^{-S_n'\L^*} f, e^{-(t+S_n) \L} f \ra \Rr]  \, \d t \\
& = \int_0^{+\infty} E \Ll[ \la f, e^{-(t+ S_n + S_n')\L} f \ra \Rr] \, \d t.
\end{align*}
By an elementary large deviation estimate, there exists a constant $C < \infty$ such that 
\begin{equation*}  %\label{e.}
P \Ll[ \Ll|S_n + S_n' - 2n\Rr| \le n \Rr] \le C \exp \Ll( - n/C \Rr) .
\end{equation*}
Using the assumption \eqref{e.assump.decay}, we thus get that 
\begin{align*}  %\label{e.}
\big| \la f_n^*, \L^{-1} f_n \ra \big| & \le \int_0^{+\infty} E\Ll[(1+t+S_n + S_n')^{-\al}\Rr] \, \d t \\
& \le C \exp(-n/C) + \int_0^{+\infty} (1+t+n)^{-\al} \, \d t \\
& \le C (1+n)^{-(\al - 1)},
\end{align*}
as announced.
\end{proof}
Under the assumption \eqref{e.assump.decay}, it is also possible to use geometrically increasing step sizes with no degradation of the convergence rate, as the next proposition demonstrates. This can be interesting, since it reduces considerably the number of Poisson equations one needs to solve: for a desired precision of $\de > 0$, using geometrically increasing step sizes requires only the resolution of a constant times  $\log \de^{-1}$ such equations, as opposed to $\de^{-1/(\al-1)}$ with the constant-step-size scheme of Proposition~\ref{p.constant.step}. (On the other hand, if the assumption \eqref{e.assump.decay} was replaced by exponentially fast convergence, then the constant-step-size scheme should be preferred.)
\begin{proposition}[Remainder estimate, geometric step size]
\label{p.geom.step}
For each $\al > 1$, there exists a constant $C < \infty$ such that the following holds. Let $f \in L^2(\P)$ satisfy \eqref{e.assump.decay}, and let $(f_k)_{k \in \N}$, $(f^*_k)_{k \in \N}$ be defined according to \eqref{e.def.fk} with the choice $\mu_k \equiv 2^{-k}$. For every $n \in \N$, we have
\begin{equation}  
\label{e.geom.step}
|\la f_n^*, \L^{-1} f_n \ra | \le C 2^{-n(\al-1)}.
\end{equation}
\end{proposition}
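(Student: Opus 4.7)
The plan is to follow the argument of Proposition~\ref{p.constant.step} almost verbatim, but adjust the final moment bound to reflect the very different concentration behaviour of $S_n$ when $\mu_k = 2^{-k}$. As in that proof, iterating the identity $\mu_k R_{\mu_k} g = E[e^{-T_k \L} g]$ (with $T_k$ an independent exponential of parameter $\mu_k$) shows that
\begin{equation*}
  f_n = E\bigl[e^{-S_n \L} f\bigr], \qquad f_n^* = E\bigl[e^{-S_n' \L^*} f\bigr],
\end{equation*}
where $S_n$ and $S_n'$ are independent copies of the sum of $n+1$ independent exponential random variables of parameters $1, 2^{-1}, \ldots, 2^{-n}$ respectively; in particular, $S_n$ is a sum of independent exponentials whose means are $2^0, 2^1, \ldots, 2^n$.

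Substituting these representations into the definition \eqref{e.def.rest} of $\la f_n^*, \L^{-1} f_n\ra$, applying Fubini (justified by \eqref{e.hyp.int}, which is implied by \eqref{e.assump.decay}), and using the decay hypothesis \eqref{e.assump.decay} as in the proof of Proposition~\ref{p.constant.step} yields
\begin{equation*}
  \bigl|\la f_n^*, \L^{-1} f_n\ra\bigr|
  \le E\!\left[\int_0^{+\infty}(1+t+S_n+S_n')^{-\al}\,\d t\right]
  = \frac{1}{\al-1}\, E\!\left[(1+S_n+S_n')^{-(\al-1)}\right].
\end{equation*}
Hence it suffices to prove that $E[(1+S_n+S_n')^{-(\al-1)}] \le C\,2^{-n(\al-1)}$.

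Unlike in Proposition~\ref{p.constant.step}, the random variable $S_n$ is not concentrated (it has mean and standard deviation both of order $2^n$), so we cannot use a large-deviation lower bound. Instead, we simply keep the last few summands, which alone already grow like $2^n$. Fix an integer $m$ with $2m > \al - 1$; for $n \ge m-1$, the last $m$ exponential summands defining $S_n$ have means $2^{n-m+1}, \ldots, 2^n$, all at least $2^{n-m+1}$, so
\begin{equation*}
  S_n + S_n' \ge 2^{n-m+1}\bigl(\Gamma_m + \Gamma_m'\bigr),
\end{equation*}
where $\Gamma_m, \Gamma_m'$ are independent Gamma$(m,1)$ random variables (obtained by rescaling). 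Since $\Gamma_m + \Gamma_m' \sim \mathrm{Gamma}(2m,1)$, the choice $2m > \al-1$ ensures that $E[(\Gamma_m+\Gamma_m')^{-(\al-1)}]$ is a finite constant depending only on $\al$, whence $E[(1+S_n+S_n')^{-(\al-1)}] \le C_\al\, 2^{-n(\al-1)}$. For the finitely many values $n < m-1$, the bound \eqref{e.geom.step} is absorbed into the constant by the trivial estimate $|\la f_n^*, \L^{-1} f_n\ra| \le (\al-1)^{-1}$ obtained from the above display with $S_n = S_n' = 0$.

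The only delicate point is the step-$m$ truncation: one must verify that no single summand of $S_n$ has a heavy enough atom near zero to ruin the moment, and that taking finitely many of them (independent of $n$) already restores the required negative moment. This is the essential new feature compared to Proposition~\ref{p.constant.step}, where concentration of $S_n$ around its mean did all the work.
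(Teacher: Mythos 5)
Your proof is correct, and the key step—establishing that $\E[(1+S_n+S_n')^{-(\al-1)}]\le C\,2^{-n(\al-1)}$—is argued quite differently from the paper. The paper first proves an exponential tail bound $P[S_n\le 2^k]\le C\exp(-K(n-k))$ by an exponential-moment (Chebyshev) estimate, and then obtains the negative-moment bound by a dyadic decomposition of $\int_1^\infty t^{-(\beta+1)}P[1+S_n\le t]\,\d t$. You instead discard all but the last $m$ summands of $S_n$, use that each is bounded below by $2^{n-m+1}$ times a standard exponential, and reduce to the finiteness of the negative $(\al-1)$-moment of a Gamma$(2m,1)$ variable, which holds as soon as $2m>\al-1$. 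Your version is shorter and more elementary: it trades the paper's quantitative tail estimate for a single explicit Gamma moment, at the small cost of having to pick $m$ as a function of $\al$ and absorb the finitely many small $n$ into the constant. The two arguments give the same rate, and both correctly exploit that $S_n$ is of order $2^n$ but (as you note) not concentrated, so the large-deviation device from Proposition~\ref{p.constant.step} is unavailable and one must control the mass of $S_n$ near zero directly. One minor observation: the paper drops $S_n'$ entirely after Fubini and works only with $E[(1+S_n)^{-(\al-1)}]$; keeping $S_n'$ as you do relaxes your requirement to $2m>\al-1$ rather than $m>\al-1$, but either choice works since $m$ is free.
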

\begin{remark}  %\label{}
In both \eqref{e.constant.step} and \eqref{e.geom.step}, the right-hand side is a multiple of $(\mu_0^{-1} + \cdots + \mu_n^{-1})^{-(\al-1)}$. 
\end{remark}
\begin{proof}
Let $(T_k)_{k \in \N}$ be independent exponential random variables of respective parameter $(2^{-k})_{k \in \N}$, and for each $n \in \N$, let
\begin{equation}  
\label{e.def.Sn}
S_n = \sum_{k = 0}^n T_k.
\end{equation}
We assume that the random variables $(T_k)$ are independent of the other quantities of our problem, and denote by $E$ the expectation with respect to these random variables. We also give ourselves $S_n'$ an independent copy of $S_n$, and recall that
\begin{equation*}  %\label{e.}
f_n = E \Ll[ e^{-S_n \L} f \Rr] \quad \text{and} \quad f_n^* = E \Ll[ e^{-S_n\L^*} f \Rr] .
\end{equation*}
We decompose  the rest of the proof into three steps.

\smallskip

\emph{Step 1.} In this first step, we show that for each $K < \infty$, there exists a constant $C < \infty$ such that for every $k \le n$,
\begin{equation}  
\label{e.bound.Sn}
P\Ll[S_n \le 2^k\Rr] \le C \exp \Ll( -K(n-k) \Rr) .
\end{equation}
By Chebyshev's inequality, we have
\begin{align*}  %\label{e.}
P\Ll[S_n \le 2^k\Rr] & \le \E \Ll[ \exp \Ll(1 -2^{-k} \Ll( T_1 + \cdots + T_n \Rr)  \Rr)   \Rr]  \\
& \le \exp \Ll( 1 - \sum_{j = 0}^n \log \Ll( 1+2^{j-k} \Rr)  \Rr).
\end{align*}
We select $k_0(K) < \infty$ such that 
\begin{equation*}  %\label{e.}
\log(1+2^{k_0}) \ge K,
\end{equation*}
and deduce that
\begin{equation*}  %\label{e.}
P\Ll[S_n \le 2^k\Rr] \le \exp \Ll( 1 - K (n-k-k_0)_+ \Rr) .
\end{equation*}
This shows \eqref{e.bound.Sn}.

\smallskip

\emph{Step 2.} We show that for every $\be \in (0,\infty)$, there exists a constant $C < \infty$ such that for ever $n$,
\begin{equation}  
\label{e.Sn.moment}
E \Ll[ (1+S_n)^{-\beta} \Rr] \le C 2^{-n\beta }.
\end{equation}
We rewrite the expectation on the left side above as
\begin{equation*}  %\label{e.}
E \Ll[ (1+S_n)^{-\beta} \Rr] = \beta \int_1^{+\infty} t^{-(\beta + 1)} P \Ll[ 1+S_n \le t \Rr] \, \d t,
\end{equation*}
and decompose the integral into several parts. The easiest part is 
\begin{equation*}  %\label{e.}
\beta \int_{2^n}^{\infty} t^{-(\beta + 1)} P \Ll[ 1+S_n \le t \Rr] \, \d t \le \beta \int_{2^n}^\infty t^{-(\beta + 1)} \, \d t \le 2^{-n\beta}.
\end{equation*}
We split the remaining part along a dyadic sequence:
\begin{align*}
\int_1^{2^n} s^{-(\beta+1)} P[1+S_n \le s] \, \d s & \le \sum_{k = 0}^{n-1} \int_{2^k}^{2^{k+1}} s^{-(\beta+1)} P[1+S_n \le s] \, \d s  \\
& \le \sum_{k = 0}^{n-1} 2^{-k\beta} P[S_n \le 2^{k+1}].
\end{align*}
Using the result of the first step with the choice of $K = (\beta+1) \log 2$, we obtain that the sum above is bounded by
\begin{equation*}  %\label{e.}
C \sum_{k = 0}^{n-1} 2^{-k\beta} \, 2^{-(n-k)(\beta+1)} \le C 2^{-n\beta}.
\end{equation*}
This completes the proof of \eqref{e.Sn.moment}.

\smallskip

\emph{Step 3.} We complete the proof. As in the proofs of Theorem~\ref{t.pgk} and Proposition~\ref{p.constant.step}, our starting point is to observe that
\begin{equation*}  %\label{e.}
\la f_n^*, \L^{-1} f_n \ra = \int_0^{+\infty} E \Ll[ \la f, e^{-(t+S_n + S_n') \L } f \ra \Rr] \, \d t.
\end{equation*}
Using \eqref{e.assump.decay}, Fubini's theorem and a change of variables, we infer that
\begin{align*}  %\label{e.}
\big|\la f_n^*, \L^{-1} f_n \ra \big| & \le \int_0^{+\infty} E\Ll[(1+t+S_n + S_n')^{-\al}\Rr] \, \d t \\
& \le \int_0^{+\infty} E\Ll[(1+t+S_n)^{-\al}\Rr] \, \d t  \\
& \le (\al - 1)^{-1} E \Ll[(1+S_n)^{-(\al - 1)}  \Rr] .
%& \le \int_0^{\infty} \int_0^\infty \frac{\al}{s^{\al+1}} P \Ll[ 1+t+S_n \le s \Rr] \, \d s \, \d t \\
%& \le \al \int_1^\infty {s^{-\al}} P[1+S_n \le s] \, \d s.
\end{align*}
The conclusion then follows from \eqref{e.Sn.moment}.
\end{proof}
Propositions~\ref{p.constant.step} and \ref{p.geom.step} provide us with guidelines for the choice of the terminal index $n$ in the series expansion of $\la f, \L^{-1} f \ra$ given in \eqref{e.pgk}. One central feature of this series expansion is the fact that as the index $k$ of the series increases and $f_k$ and $f_k^*$ become more difficult to compute, these quantities also become smaller, and we therefore need much fewer samples to evaluate the average $\la f_k^*, f_k \ra$ accurately. The smallness of $f_k$ is most clearly seen if we specialize the proposition below to the reversible case.
\begin{proposition}
\label{p.size.fk}
There exists a constant $C < \infty$ such that the following holds.  Let $f \in L^2(\P)$ satisfy \eqref{e.assump.decay}, and let $(f_k)_{k \in \N}$, $(f_k^*)_{k \in \N}$) be defined according to \eqref{e.def.fk} with the choice $\mu_k \equiv 1$. For every $k \in \N$, we have
\begin{equation*}  %\label{e.}
\Ll|\la f_{k-1}^*, f_k \ra \Rr| + \Ll|\la f_k^*, f_k \ra\Rr| \le C (1+k)^{-\al}.
\end{equation*}
If instead, the sequences $(f_k)_{k \in \N}$ and $(f_k^*)_{k \in \N}$ are defined according to \eqref{e.def.fk} with the choice $\mu_k \equiv 2^{-k}$, then for every $k \in \N$,
\begin{equation*}  %\label{e.}
\Ll|\la f_{k-1}^*, f_k \ra \Rr| + \Ll|\la f_k^*, f_k \ra\Rr|  \le C 2^{-k\al}.
\end{equation*}
\end{proposition}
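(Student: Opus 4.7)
The plan is to proceed exactly as in the proofs of Propositions~\ref{p.constant.step} and~\ref{p.geom.step}, by representing $f_k$ and $f_k^*$ as expectations of $e^{-S\L}f$ where $S$ is a sum of independent exponentials, reducing the problem to a moment estimate on $S$. More precisely, let $(T_k)_{k \in \N}$ be independent exponentials with $T_k$ of parameter $\mu_k$, let $S_k := T_0 + \cdots + T_k$, and let $(S_k')$ be an independent copy. Since the resolvent can be represented as averaging of $e^{-t\L}$ against an exponential weight, one obtains
\begin{equation*}
f_k = E\Ll[e^{-S_k \L} f \Rr], \qquad f_k^* = E\Ll[e^{-S_k \L^*} f\Rr],
\end{equation*}
and therefore, by independence of $S_k$ and $S_k'$,
\begin{equation*}
\la f_{k-1}^*, f_k \ra = E\Ll[\la f, e^{-(S_{k-1}' + S_k) \L} f \ra\Rr], \qquad \la f_k^*, f_k \ra = E\Ll[\la f, e^{-(S_k' + S_k)\L} f\ra\Rr].
\end{equation*}
Using the decay hypothesis~\eqref{e.assump.decay}, both inner products are bounded in absolute value by $E[(1+S_{k-1})^{-\al}]$ and $E[(1+S_k)^{-\al}]$ respectively (dropping the independent copy only helps).

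For the constant step-size case $\mu_k \equiv 1$, the variable $S_k$ is a sum of $k+1$ independent exponentials of parameter~$1$, and a standard large deviation estimate gives $P[S_k \le k/2] \le C \exp(-k/C)$. Splitting the expectation along the event $\{S_k \le k/2\}$ and its complement yields
\begin{equation*}
E\Ll[(1+S_k)^{-\al}\Rr] \le C\exp(-k/C) + C(1+k)^{-\al} \le C(1+k)^{-\al},
\end{equation*}
which is exactly the claimed bound. For the geometric step-size case $\mu_k \equiv 2^{-k}$, we directly invoke the already-established moment bound~\eqref{e.Sn.moment} (Step~2 of the proof of Proposition~\ref{p.geom.step}), applied with $\beta = \al$, to obtain $E[(1+S_k)^{-\al}] \le C 2^{-k\al}$; the term involving $S_{k-1}$ is handled identically, at the cost of replacing $k$ by $k-1$, which is absorbed into the multiplicative constant.

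There is no real obstacle here beyond the bookkeeping: the proposition is essentially a corollary of the same probabilistic representation and moment estimate used in the preceding two propositions, applied one order lower (one avoids integrating in $t$). The only mild subtlety is that, on the left-hand side, the two terms $|\la f_{k-1}^*, f_k\ra|$ and $|\la f_k^*, f_k\ra|$ differ in the index on one side, but both reduce to a bound of the form $E[(1+S_{k-1})^{-\al}]$ or $E[(1+S_k)^{-\al}]$, of the same order in $k$.
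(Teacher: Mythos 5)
Your proof is correct and follows exactly the route the paper indicates: the paper omits the argument, stating only that it is "almost identical to the proofs of Propositions~\ref{p.constant.step} and~\ref{p.geom.step}," and your write-up fills in precisely that omission, representing $f_k$ and $f_k^*$ via $E[e^{-S_k\L}f]$ and $E[e^{-S_k\L^*}f]$ and then applying the same large-deviation estimate (constant step) or the moment bound~\eqref{e.Sn.moment} (geometric step), one order lower since no integration in $t$ is required. The observation that dropping the independent copy $S'_k$ (or $S'_{k-1}$) only weakens the bound is exactly what makes the reduction to a single $E[(1+S_k)^{-\al}]$ clean.
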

The proof of this proposition is almost identical to the proofs of Propositions~\ref{p.constant.step} and \ref{p.geom.step}, so we omit it. The point we wish to stress here is that at least under the assumption of reversibility (that is, $\L^* = \L$ and thus $f_k^* = f_k$), the typical size of $f_k$ (as measured in $L^2(\P)$) is small: it is of the order of $k^{-\al/2}$ for constant step sizes, and of the order of $2^{-k\al/2}$ for geometrically increasing step sizes. 

\smallskip

In the non-reversible setting, the situation is more subtle. Indeed, the assumption of \eqref{e.assump.decay} no longer informs us on the size of $e^{-t\L} f$ in $L^2(\P)$; similarly, we can no longer sharply estimate the size of $f_k$ (whatever the choice of $\mu_k$) from this assumption. In Proposition~\ref{p.decay.discrete} below, we consider dynamical systems (which in some sense is the ``worst-case scenario'' for these bounds) and show, under an assumption comparable to \eqref{e.assump.decay}, that $f_k$ is of the order of $k^{-\frac 1 4}$ in the case of $\mu_k \equiv 1$, and that $f_k$ is of the order of $2^{-\frac k 2}$ in the case of geometrically increasing step sizes. This gives further motivation for using geometrically increasing step sizes.

\section{Discrete-time Markov chains}
\label{s.dynsys}

In this section, we explain how to adapt the arguments of the previous section to the setting of discrete-time Markov chains. We then discuss the rate of decay to zero of the remainder in the series expansion \eqref{e.pgk} in the ``extreme'' case of dynamical systems. 

\smallskip

Let $(X_n)_{n \in \N}$ be a discrete-time Markov chain on $\X$, and let $P$ be its transition operator. That is, with hopefully transparent notation,
\begin{equation*}  %\label{e.}
Pf(x) := \EE_x \Ll[ f(X_1) \Rr].
\end{equation*}
We define
\begin{equation}  
\label{e.newL}
\L := \mathrm{Id} - P.
\end{equation}
We assume that the measure $\P$ is invariant for $(X_n)$, and continue denoting by~$\la \cdot, \cdot \ra$ the scalar product in $L^2(\P)$. We fix $f \in L^2(\P)$ of mean zero. As in the continuous case, there is no simple and general theorem asserting whether the additive functional 
\begin{equation}  
\label{e.additive.discrete}
\frac 1 {\sqrt{n}} \sum_{k = 0}^n f(X_k)
\end{equation}
converges in law to a Gaussian as $n$ tends to infinity. Note that at this level of generality, the question covers dynamical systems as a particular case; we refer to \cite{liver, dolg, melbourne} and references therein for results in this context. In analogy with the continuous case, we assume throughout that
\begin{equation} 
\label{e.hyp.int.dynsys}
\sum_{k = 0}^{+\infty} \big| \la f, P^k f \ra \big| < \infty.
\end{equation}
Under some additional assumptions (e.g.\ reversibility is sufficient \cite{kipvar}), the random variable in \eqref{e.additive.discrete} converges in law to a centered Gaussian of variance 
\begin{align}  
%\label{e.}
\notag
\sigma^2(f) & :=  \la f, f \ra + 2 \lim_{\lambda \to 0} \la f, (\lambda + \L)^{-1} f \ra \\
\label{e.identity.resolv.discrete}
& = - \la f, f \ra + 2\sum_{k = 0}^{+\infty} \la f, P^k f \ra .
\end{align}
The analogue of the formula \eqref{e.gk2}, adapted to the non-reversible and discrete-time setting, reads
\begin{equation}  
\label{e.gk2.discrete}
\sigma^2(f) = -\la f, f \ra + 2 \sum_{k = 0}^{+\infty} \big( \la (P^*)^k f, P^k f \ra + \la (P^*)^k f, P^{k+1} f \ra \big)  ,
\end{equation}
where $P^*$ is the adjoint of $P$ in $L^2(\P)$. This formula can be used directly to evaluate $\sigma^2(f)$ (this is essentially the formula used in Section~\ref{s.discrete} below). We also have that the exact statement of Theorem~\ref{t.pgk}, with $\L$ now defined by \eqref{e.newL}, holds---and the proof given there applies with essentially no change besides a minor adaptation to the discrete-time setting. The minor adaptation boils down to replacing the formula for the resolvent in \eqref{e.def.R} with
\begin{equation}
\label{e.alt.R}
R_\lambda = \sum_{k = 0}^{+\infty} (\lambda + 1)^{-(k+1)} \, P^k.
\end{equation}

\smallskip

Strengthening \eqref{e.hyp.int.dynsys}, and in analogy with \eqref{e.assump.decay}, we may assume that there exists an exponent $\al > 1$ such that for every $k \in \N$, 
\begin{equation}  
\label{e.decay.disc}
\big| \la f, P^k f \ra \big| \le (1+k)^{-\al}.
\end{equation}
This assumption implies the validity of Propositions~\ref{p.constant.step}, \ref{p.geom.step} and \ref{p.size.fk}. The proofs are almost identical except for the use of \eqref{e.alt.R} in place of \eqref{e.def.R}. 

\smallskip 

With reversible dynamics, the assumption of \eqref{e.decay.disc} directly translates into information on the size of $P^k f$ itself. This is not so in general. We can illustrate this phenomenon most clearly in the particular case of dynamical systems. Indeed, assume that there exists a measure-preserving mapping $\T: \X \to \X$ such that for every $n$, $X_n = \T^n(X_0)$, or equivalently,
\begin{equation}  
\label{e.def.dynsys}
P f = f \circ \T. 
\end{equation}
Then the $L^2$ norm of $P^k f$ does not depend on $k$, since $\T$ is measure-preserving. In this case, we thus need many samples of $\T^k f$ and $(\T^*)^k f$ if we wish to estimate the average $\la (\T^*)^k f, \T^k f \ra$ using independent draws. On the other hand, we may expect the quantity $f_k$ appearing in Theorem~\ref{t.pgk} to display some cancellations that make it smaller. The next proposition quantifies this more precisely.
\begin{proposition}
\label{p.decay.discrete}
There exists a constant $C < \infty$ such that the following holds.  Let $P$ be the transition operator of a dynamical system, see \eqref{e.def.dynsys}, let $f \in L^2(\P)$ satisfy \eqref{e.decay.disc}, and let $(f_k)_{k \in \N}$, $(f_k^*)_{k \in \N}$ be defined according to \eqref{e.def.fk} with the choice $\mu_k \equiv 1$. For every $k \in \N$, we have
\begin{equation}  
\label{e.disc1}
\Ll|\la f_{k}^*, f_k^* \ra \Rr| + \Ll|\la f_k, f_k \ra\Rr| \le C (1+k)^{-\frac 1 2}.
\end{equation}
If instead, the sequences $(f_k)_{k \in \N}$ and $(f_k^*)_{k \in \N}$ are defined according to \eqref{e.def.fk} with the choice $\mu_k \equiv 2^{-k}$, then for every $k \in \N$,
\begin{equation}  
\label{e.disc2}
\Ll|\la f_{k}^*, f_k^* \ra \Rr| + \Ll|\la f_k, f_k \ra\Rr|  \le C 2^{-k}.
\end{equation}
\end{proposition}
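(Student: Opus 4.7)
The strategy is to apply the exponential-time representation used in the proofs of Propositions~\ref{p.constant.step} and~\ref{p.geom.step}. Since $\mu_k(\mu_k + \L)^{-1} = \sum_{j \ge 0} \mu_k (1+\mu_k)^{-(j+1)} P^j = E\bigl[P^{T_k}\bigr]$, where $T_k$ is geometric with $\P[T_k = j] = \mu_k(1+\mu_k)^{-(j+1)}$ (and hence mean $\mu_k^{-1}$), iterating gives
\[
f_k = E\bigl[P^{S_k} f\bigr], \qquad f_k^* = E\bigl[(P^*)^{S_k} f\bigr],
\]
with $S_k := T_0 + \cdots + T_k$ a sum of independent geometric random variables, and $E$ the expectation in the auxiliary randomness.

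To bound $\|f_k\|^2$, I would take an independent copy $S_k'$ and write $\|f_k\|^2 = E\bigl[\la P^{S_k}f, P^{S_k'}f\ra\bigr]$. The key algebraic input is that, because $\T$ is measure-preserving, $P$ is isometric on $L^2(\P)$, equivalently $(P^*)^c P^c = \mathrm{Id}$ for every $c \ge 0$. For $a \le b$,
\[
\la P^a f, P^b f \ra = \la f, (P^*)^a P^b f\ra = \la f, P^{b-a} f\ra,
\]
so $\|f_k\|^2 = E\bigl[\la f, P^{|S_k - S_k'|} f\ra\bigr]$, which by \eqref{e.decay.disc} is bounded by $E\bigl[(1+|S_k - S_k'|)^{-\al}\bigr]$. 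For $\|f_k^*\|^2$, one cannot directly use an isometry identity because $P^*$ is only a contraction in the non-invertible case; the cleanest route is to pass to the natural invertible extension $(\tilde{\X}, \tilde{\P}, \tilde{\T})$, in which $\tilde P$ becomes unitary, so $(\tilde P^*)^c$ is itself an isometry. Lifting $f$ to $\tilde f = f \circ \pi$ preserves all inner products $\la f, P^c f\ra = \la \tilde f, \tilde P^c \tilde f\ra$, and the same computation yields $\|f_k^*\|^2_{L^2(\P)} \le E\bigl[(1+|S_k - S_k'|)^{-\al}\bigr]$ as well.

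It then remains to estimate $E\bigl[(1+|S_k - S_k'|)^{-\al}\bigr] = \sum_{j \ge 0}(1+j)^{-\al}\,\P[|S_k-S_k'| = j]$ via a uniform anti-concentration bound on $S_k - S_k'$. For constant step sizes $\mu_k \equiv 1$, $S_k$ is a sum of $k+1$ i.i.d.\ geometric variables (mean $1$, variance $2$), so $S_k - S_k'$ has variance $\Theta(k)$; a local CLT (or a direct bound on the geometric characteristic function $|\phi(\theta)|^{2(k+1)} \le \exp(-c(k+1)\theta^2)$ for $|\theta| \le \pi$) gives $\sup_{j \in \Z}\P[S_k - S_k' = j] \le C(1+k)^{-1/2}$, and since $\sum_j (1+j)^{-\al} < \infty$ (using $\al > 1$), the estimate $\le C(1+k)^{-1/2}$ follows. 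For geometric step sizes $\mu_k = 2^{-k}$, the single summand $T_k$ is geometric with success probability $\mu_k/(1+\mu_k) \le 2^{-k}$, so $\sup_j \P[T_k = j] \le 2^{-k}$; convolving with the independent $T_k'$ and the remaining terms preserves this bound, giving $\sup_j \P[S_k - S_k' = j] \le 2^{-k}$ and hence the estimate $\le C \cdot 2^{-k}$.

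The main obstacle is the handling of $\|f_k^*\|^2$: in the absence of invertibility of $\T$, $P^*$ is only a contraction and $PP^* = \Pi_1$ is a nontrivial orthogonal projection, which prevents a direct application of the isometry identity. Either one passes to the natural extension as above, or one expands $\|f_k^*\|^2 = \la f, AA^* f\ra$ with $A = \sum_j p_j^{(k)} P^j$ and decomposes $P^a(P^*)^b = \Pi_{\min(a,b)} \cdot P^{(a-b)_+} (P^*)^{(b-a)_+}$, using the symmetry $\la f, P^c f\ra = \la f, (P^*)^c f\ra$ together with the decay \eqref{e.decay.disc} to recover the same bound. Everything else is routine, and $k = 0$ is absorbed into the constant $C$.
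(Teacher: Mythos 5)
Your argument for $\langle f_k, f_k\rangle$ is essentially the same as the paper's: represent $f_k = E[P^{S_k} f]$ with $S_k$ a sum of independent geometric variables, use the isometry of $P$ (composition with the measure-preserving map $\T$) to reduce $\langle P^{S_k} f, P^{S_k'} f\rangle$ to $\langle f, P^{|S_k - S_k'|} f\rangle$, invoke \eqref{e.decay.disc}, and then appeal to an anti-concentration estimate for $S_k - S_k'$. The technical route for the anti-concentration differs slightly: the paper uses Berry--Esseen on the CDF combined with a layer-cake formula, while you go through a local CLT / characteristic-function bound plus $\sum_j (1+j)^{-\al} < \infty$ — both are fine, and for the geometric step size both of you use the same uniform bound $\sup_j \P[T_k = j] \le 2^{-k}$.

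You are right to single out $\|f_k^*\|^2$ as the delicate point. In fact the paper's own proof only estimates $\langle f_k, f_k\rangle$ and is silent about $\langle f_k^*, f_k^*\rangle$; for an \emph{invertible} $\T$ (as is presumably the intended case), $P^*$ is unitary and the identical argument applies, but for non-invertible $\T$ something extra is needed, and the paper does not supply it. However, your proposed fix via the natural extension does not actually close that gap. The issue is that while lifting $f$ to $\tilde f = f \circ \pi$ intertwines $P$ with $\tilde P$ (so $\widetilde{P^j f} = \tilde P^j \tilde f$), it does \emph{not} intertwine $P^*$ with $\tilde P^*$: the factor subspace $L^2(\P) \cong \{g \circ \pi\} \subset L^2(\tilde\P)$ is invariant under $\tilde P$ but not under $\tilde P^* = \tilde P^{-1}$, so $\tilde P^* \tilde f$ lives outside the factor. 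On the factor, $P^*$ is realized as the compression $\Pi \tilde P^* \Pi$ (with $\Pi$ the conditional expectation onto $\sigma(\pi)$), not as $\tilde P^*$ itself. Hence the unitary computation in the extension bounds $\|\tilde f_k^*\|$, where $\tilde f_k^*$ is built from $\tilde P^*$ — a different quantity from the lift of $f_k^*$. Your alternative sketch via $P^a(P^*)^b = \Pi_{\min(a,b)} P^{(a-b)_+}(P^*)^{(b-a)_+}$ runs into the same wall: the projection $\Pi_{\min(a,b)}$ does not act trivially on $f$, and one cannot dispose of it using only \eqref{e.decay.disc}. In short, your identification of the obstacle is correct, but neither of the two patches you offer for it is rigorous as stated; if invertibility of $\T$ is assumed (which is the cleanest reading and is consistent with the paper), both your argument and the paper's go through with no issue.
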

We stress again that the exact statement of Proposition~\ref{p.size.fk} is true under the assumption of \eqref{e.decay.disc}. Proposition~\ref{p.decay.discrete} is a statement about the respective sizes of $f_k^*$ and $f_k$, \emph{not} about the size of the correlation between these quantities.
\begin{proof}[Proof of Proposition~\ref{p.decay.discrete}]
Let $T_1, T_2, \ldots$ be independent random variables following the same law of geometric type:
\begin{equation}
\label{e.def.geom}
\forall k \in \N, \quad P \Ll[ T_i = k \Rr] = \mu_i (\mu_i + 1)^{-(k+1)} .
\end{equation}
We assume that these random variables are independent of the other randomness of the problem, and denote by $E$ the expectation with respect to these variables. We also set, for each $k \in \N$,
\begin{equation*}  %\label{e.}
S_k := T_1 + \cdots + T_k,
\end{equation*}
and give ourselves an independent copy $S_k'$ of $S_k$. In view of the formula \eqref{e.alt.R}, we have the representation
\begin{equation*}  %\label{e.}
f_k = E \Ll[ P^{S_k} f \Rr] ,
\end{equation*}
and thus
\begin{equation*}  %\label{e.}
\la f_k, f_k \ra = E \Ll[ \la P^{S_k} f, P^{S_k'} f \ra \Rr] .
\end{equation*}
Since the mapping $\T$ is measure-preserving, we deduce that
\begin{equation*}  %\label{e.}
\la f_k, f_k \ra = E \Ll[ \la f, P^{|S_k - S_k'|} f \ra \Rr] ,
\end{equation*}
and by the assumption of \eqref{e.decay.disc} and the triangle inequality, we get
\begin{align*}  %\label{e.}
\big| \la f_k, f_k \ra \big| & \le E \Ll[ (1+|S_k - S_k'|)^{-\al} \Rr] \\
& \le C E \Ll[ (1+ |S_k - E[S_k]|)^{-\al} \Rr] .
\end{align*}
We now use the decomposition
\begin{equation}  
\label{e.fubini.friend}
E \Ll[ (1+ |S_k - E[S_k]|)^{-\al} \Rr] = \int_1^\infty \frac{\al}{s^{\al + 1}} \,  P \Ll[ 1 + |S_k - E[S_k]| \le s \Rr] \, \d s.
\end{equation}
We first analyze the case of constant step size, that is $\mu_k \equiv 1$. In this case, the random variable $S_k$ is a sum of independent and identically distributed random variables. Denote by $\tau^2$ the variance of $T_1$. By the Berry-Esseen theorem, see \cite[Theorem~5.5]{petrov}, there exists a constant $C < \infty$ such that for every $k \in \N$ and $s \in \R$,
\begin{equation*}  %\label{e.}
\Ll|P\Ll[ \Ll| S_k - E[S_k] \Rr| \le s\sqrt{k}\Rr] - \frac {2}{\tau\sqrt{2\pi}} \int_{0}^s \exp \Ll( -\frac{x^2}{2\tau^2}  \Rr) \, \d x   \Rr|  \le \frac{C}{\sqrt{k}}.
\end{equation*}
We thus obtain that 
\begin{align*}  %\label{e.}
E \Ll[ (1+ |S_k - E[S_k]|)^{-\al} \Rr] & \le \frac{C}{\sqrt k}  + C \int_1^\infty \frac{1}{s^{\al + 1}} \int_{0}^{\frac s {\sqrt{k}}} \exp \Ll( -\frac{x^2}{2\tau^2} \Rr) \, \d x \, \d s \\
& \le \frac{C}{\sqrt{k}} + C \int_1^{\infty} \frac 1 {s^{\al + 1}} \, \frac{s}{\sqrt{k}} \, \d s \\
& \le \frac{C}{\sqrt{k}},
\end{align*}
as announced.

\smallskip

We now turn to the case of geometrically increasing step sizes, that is, $\mu_k \equiv 2^{-k}$. We first show that there exists a constant $C < \infty$ such that for every $k \in \N$ and $s \ge 0$,
\begin{equation}  
\label{e.easy.decay}
P\Ll[|S_k - E[S_k]| \le s\Rr] \le C 2^{-k} \Ll({s+1}\Rr).
\end{equation}
In order to do so, it suffices to verify that there exists $C < \infty$ such that for every $k \in \N$ and $x \in \N$,
\begin{equation*}  %\label{e.}
P \Ll[ S_k = x \Rr] \le C 2^{-k}.
\end{equation*}
It follows from the definition of $T_k$ that this property is satisfied if $S_k$ is replaced by $T_k$. By independence between $T_k$ and $S_{k-1}$, we deduce
\begin{align*}  %\label{e.}
P \Ll[ S_k = x \Rr] & = E \Ll[ P \Ll[ T_k = x - S_{k-1} \ | \ S_{k-1} \Rr] 
 \Rr] \\
 & \le E \Ll[ \sup_{z \in \Z} P\Ll[T_k = z \ | \ S_{k-1} \Rr] \Rr] \\
 &  \le C 2^{-k}.
\end{align*}
This implies \eqref{e.easy.decay}. Combining this with \eqref{e.fubini.friend} yields
\begin{equation*}  %\label{e.}
E \Ll[ (1+ |S_k - E[S_k]|)^{-\al} \Rr]  \le C 2^{-k} \int_1^\infty \frac{1}{s^{\al }} \, \d s \le C 2^{-k},
\end{equation*}
completing the proof.
\end{proof}
\begin{remark}  %\label{}
One can verify that the estimates obtained in Proposition~\ref{p.decay.discrete} are sharp, in the sense that assuming the converse inequality to \eqref{e.decay.disc}, one can show the converse inequalities to \eqref{e.disc1} and \eqref{e.disc2}, up to multiplicative constants.
\end{remark}

\section{Examples}
\label{s.examples}

In the second part of the paper, we explore thoroughly the example of a reversible random walk in a random environment, or equivalently the homogenization of discrete divergence-form operators with random coefficients. Under some assumptions, the relaxation property \eqref{e.assump.decay} holds with $\al = \frac d 2 + 1$. We will explain how to implement numerical schemes derived from Theorem~\ref{t.pgk} by leveraging on the mixing properties of the functions $f_k$ under spatial translations. We will show that such numerical schemes achieve essentially optimal complexity, requiring $C \de^{-2} \log (\de^{-1})$ computations to compute the effective diffusivity at precision $\de > 0$.

\smallskip

An alternative route to solve the same problem would be to use Monte-Carlo methods. Computing the effective diffusivity at precision $\delta > 0$ using the method described in Proposition~\ref{p.improved} would call for the simulation of $\delta^{-2}$ random walk trajectories that run up to time $\delta^{-\frac 2 d}$, bringing the total cost of the method to a suboptimal $\delta^{-2-\frac 2 d}$. (See \cite[Theorem~4.1]{MCdisc} for the verification that a large deviation estimate much stronger than \eqref{e.fourth} holds.)

\smallskip

In this section, we discuss briefly other examples of interest where the methods described here can be applied. 

\subsection{Tagged particle in exclusion process}
One example where the approach based on Theorem~\ref{t.pgk} seems very costly to implement is the problem of computing the effective diffusivity of a tagged particle in a system of particles evolving according to, say, exclusion dynamics on $\Z^d$. We refer to \cite{lig2} for the definition of this and other models of interacting particle systems. Indeed, the Poisson equations appearing in \eqref{e.def.fk} are posed in $\{0,1\}^{\Z^d}$. Even if we restrict the dynamics to a finite box $B \subset \Z^d$, the size of each ergodic component of the dynamics is exponentially large as a function of the volume of $B$, which seems to preclude any computationally efficient approach to the resolution of these equations. In contrast, the simulation of the dynamics on $B$ is feasible, and by exchangeability, the trajectory of each of the simulated particles can be used as a sample of the trajectory of a tagged particle. One would then need to assert precisely the independence properties of these trajectories, but it is clear that the diffusivity can be computed in polynomial time using an MCMC scheme, and possibly in about $\de^{-2-\frac 2 d}$ operations.

\subsection{Langevin dynamics}

Hypoelliptic dynamics can also be considered. We explore briefly here the case of Langevin dynamics in a random potential. Let $\phi$ be a smooth stationary and ergodic function over $\Rd$. We consider the Markov process $(X_t, V_t)_{t \ge 0}$, taking values in $\Rd \times \Rd$,  whose infinitesimal generator is given by
\begin{equation}
\label{e.def.LL}
L f := \Delta_vf - v \cdot \nabla_vf - \nabla_x \phi \cdot \nabla_v f + v \cdot \nabla_x.
\end{equation}
Note that in this case, the process $(X_t)$ itself is an additive functional, since 
\begin{equation*}  %\label{e.}
X_t = \int_0^t V_s \, \d s,
\end{equation*}
while for random walks or tagged particles the connection between the dynamics and additive functionals is less immediate (but see \cite{kipvar}).
The fact that $(\eps X_{\eps^{-2} t})_{t \ge 0}$ converges in law to Brownian motion as $\eps$ tends to zero was proved in \cite{papvar}. Moreover, the invariant measure for the process of the environment as seen from $X$ is explicit. Under suitable assumptions on the mixing properties of $\phi$, we may expect, in analogy with the simpler setting explore in the second part of the paper, that the polynomial relaxation \eqref{e.assump.decay} holds with $\al = \frac d 2 + 1$. 

\smallskip

It can be relatively costly (but not as prohibitive as for interacting particle systems) to compute approximate solutions to the Poisson equations appearing in \eqref{e.def.fk}. Compared with the situation encountered for random walks, the additional difficulty stems in part from the fact that the operator is hypoelliptic, but also simply from the fact that there are more variables to solve for (due to the addition of velocity coordinates). The superiority of this approach compared to that based on Monte-Carlo sampling is thus unclear, and probably depends on the situation. In one extreme case, if the dimension is small and~$\phi$ is a deterministic periodic function (whose periods need not be known), then an approach similar to that exposed in the second part of the paper is most probably best (and it should be preferable to choose $\mu_k \equiv 1$ there, since the convergence to equilibrium is exponentially rapid in the periodic setting). At the other extreme, in the random case and in high dimension, the Monte-Carlo approach probably becomes superior.

\chapter{Stochastic homogenization}
\label{part.two}

\section{Notation and basic properties}
\label{s.defs}

In this second part of the paper, we study the problem of approximating the homogenized matrix of divergence-form operators with random coefficients, as stated in the introduction. In the present section, we set up notation and assumptions and recall basic facts of homogenization. We then proceed to prove the error estimate \eqref{e.main} in Section~\ref{s.error}. In Section~\ref{s.compl}, we complete the proof of Theorem~\ref{t.main} by estimating the computational complexity of the method, and prove the complexity lower bound given by Proposition~\ref{p.lower.bound}. We report on the results of numerical tests in Section~\ref{s.numer}. Finally, we explore a variant of the method based on discrete-time parabolic equations in Section~\ref{s.discrete}.

\smallskip

We focus on the discrete space $\Z^d$, $d \ge 2$. We say that $x, y \in \Zd$ are neighbors, and write $x \sim y$, if $|x-y| = 1$. This turns $\Z^d$ into a graph, and we denote by~$\B$ the corresponding set of undirected edges. For a fixed ellipticity parameter $\Lambda \in (1,\infty)$, we let $\Omega = [1,\Lambda]^{\B}$ denote the set of coefficient fields, equipped with the product $\sigma$-algebra. We denote by $\a = (\a_e)_{e \in \B}$ the canonical random variable on $\Omega$. We let $\P$ be a probability measure on $\Omega$ under which the random variables $(\a_e)_{e \in \B}$ are independent and identically distributed (i.i.d.), and denote by $\E$ the associated expectation. Besides uniform ellipticity, this independence assumption is our main quantitative hypothesis. 
%
%As a rule of thumb, every inequality derived in this paper requires this assumption, while every equality would actually hold under the weaker assumption of stationarity and ergodicity.
%
The group $\Z^d$ acts on $\Omega$ by translations via
\begin{equation*}  %\label{e.}
\Ll(\theta_x \a\Rr)_{e} = \a_{x + e},
\end{equation*}
where $x + e$ denotes the edge $e$ translated by the vector $x$. 
Let $(\e_1,\ldots,\e_d)$ be the canonical basis of $\Z^d$. We may identify each vector $\e_i$ with the corresponding edge $\{0,\e_i\}$ when convenient. The discrete gradient $\nabla$ acts on functions $f : \Zd \to \R$ according to
\begin{equation*}  %\label{e.}
\nabla f(x) := \Ll( f(x + \e_1) - f(x), \ldots, f(x+\e_d) - f(x)\Rr) ,
\end{equation*}
while the discrete divergence acts on function $F = (F_1,\ldots,F_d) : \Omega \to \Rd$ via
\begin{equation*}  %\label{e.}
\nabla \cdot F(x) := \sum_{i = 1}^d \Ll( F_i(x) - F_i(x-\e_i) \Rr) .
\end{equation*}
One can check that up to a minus sign, the gradient and divergence operators are formal adjoints of one another with respect to the counting measure on $\Zd$. The operator of interest to us is defined by
\begin{equation*}  %\label{e.}
L := -\nabla \cdot \a \nabla,
\end{equation*}
where here we interpret $\a(x)$ as the diagonal matrix $\diag(\a_{x+\e_1}, \ldots,\a_{x+\e_d})$. More explicitly, for every $f : \Zd \to \R$,
\begin{equation*}  %\label{e.}
Lf(x) = \sum_{y \sim x} \a_{xy} \Ll( f(x) - f(y) \Rr) .
\end{equation*}
It is useful to lift this operator to stationary functions on $\Omega$. We first define the corresponding discrete gradient $D$, which acts on functions $f : \Omega \to \R$ via
\begin{equation*}  %\label{e.}
D f(\a) := \Ll(  %\label{}
f(\theta_{\e_1}\a) - f(\a), \ldots, f(\theta_{\e_d}\a) - f(\a) 
\Rr),
\end{equation*}
and the discrete divergence, which acts on functions $F = (F_1, \ldots, F_d) : \Omega \to \Rd$ via
\begin{equation*}  %\label{e.}
D \cdot F(\a) := \sum_{i =1}^d \Ll( F_i(\a) - F_i(\theta_{-\e_i} \a) \Rr) .
\end{equation*}
One can verify that up to a minus sign, the gradient and divergence operators are formal adjoints of one another under the measure $\P$. The ``lifted''  operator is 
\begin{equation}  
\label{e.def.mclL}
\L := -D \cdot \a D,
\end{equation}
where we understand here that $\a = \diag(\a_{\e_1}, \ldots,\a_{\e_d})$. In a more explicit form, for every $f : \Omega \to \R$,
\begin{equation*}  %\label{e.}
\L f(\a) = \sum_{z \sim 0} \a_{0z} \Ll( f(\a) - f(\theta_z \a) \Rr) .
\end{equation*}
The operator $\L$ is self-adjoint on $L^2(\Omega)$. 
With each function $f : \Omega \to \R$, one can associate a stationary extension $\tilde f : \Omega \times \Zd \to \R$ defined by
\begin{equation*}  %\label{e.}
\tilde f(\a,x) := f(\theta_x \a). 
\end{equation*}
Any function on $\Omega \times \Zd$ that can be written in this way is said to be stationary. 
%We will often abuse notation and write $f(x)$ instead of $\tilde f(\a,x)$. %No confusion should arise since the difference operators $\nabla$ and $D$ can only act on one of these two functions; and if needs be, we may also indicate which function we consider by writing more explicitly $f(\a)$ for the function on $\Omega$, and $f(\a,x)$ or simply $f(x)$ for the stationary extension. 

We let $\Lpot(\Omega)$ denote the closure in $L^2(\Omega,\Rd)$ of the set of gradient fields $\{D f \ : \ f \in L^2(\Omega,\Rd)\}$. Each element of $\Lpot(\Omega)$ can be realized as the gradient of a possibly non-stationary function $\Omega \times \Zd \to \R$. Naturally, the gradient determines the function up to an additive constant. We denote by $D f$ any element of $\Lpot(\Omega)$, keeping in mind that $f$ may be a non-stationary function. We fix once and for all a unit vector $\xi \in \Rd$, and define the gradient of the corrector in the direction of $\xi$ as the unique $D\phi \in \Lpot(\Omega)$ solution to
\begin{equation}
\label{e.def.corr}
- D \cdot  \a (\xi + D \phi) = 0.
\end{equation}
The equation \eqref{e.def.corr} is interpreted in the weak sense of
\begin{equation}  \label{e.weak.corr}
\forall \, D f \in \Lpot(\Omega), \quad \E \Ll[ D f \cdot \a \Ll( \xi + D \phi \Rr)  \Rr] = 0.
\end{equation}
The existence of $D \phi$ can be showed by considering first the approximate correctors $(\phi_\mu)_{\mu > 0}$. For each $\mu > 0$, the approximate corrector $\phi_\mu \in L^2(\Omega)$ solves an equation with an additional massive term:
\begin{equation}  
\label{e.def.phimu}
\mu \phi_\mu - D \cdot \a \Ll(\xi + D \phi_\mu\Rr) = 0,
\end{equation}
and its gradient converges to $D\phi$:
\begin{equation}
\label{e.conv.phimu}
D \phi_\mu \xrightarrow[\mu \to 0]{L^2(\Omega)} D \phi.
\end{equation}
Indeed, energy estimates show that $D\phi_\mu$ is bounded in $L^2(\Omega)$, and therefore converges weakly to some element of $\Lpot(\Omega)$ up to extraction. Using also that $\mu^\frac 1 2 \phi_\mu$ is bounded in $L^2(\Omega)$, we infer that the convergence in \eqref{e.conv.phimu} holds in the weak sense. In order to justify strong convergence in $L^2(\Omega)$, it then suffices to verify that 
\begin{equation*}  %\label{e.}
\limsup_{\mu \to 0} \E\Ll[D \phi_\mu \cdot \a D \phi_\mu\Rr] \le \E\Ll[D \phi \cdot \a D \phi\Rr].
\end{equation*}
This follows from 
\begin{equation*}  %\label{e.}
\E \Ll[  \mu \phi_\mu^2 + D \phi_\mu \cdot \a D \phi_\mu \Rr] = -\E[D \phi_\mu \cdot \a \xi]
\end{equation*}
and the fact that the right side converges to 
\begin{equation*}  %\label{e.}
-\E[D \phi \cdot \a \xi] = \E[D \phi \cdot \a D \phi].
\end{equation*}
As is well-known, the discrete operator $-\nabla \cdot \a \nabla$ homogenizes to the continuous differential operator $-\nabla \cdot \ahom \nabla$, where $\ahom$ is the constant-in-space, deterministic matrix characterized by the relation
\begin{equation}  \label{e.charact.ahom}
\xi \cdot \ahom \xi = \E \Ll[ (\xi + D \phi) \cdot \a (\xi + D \phi) \Rr] .
\end{equation}
Using the weak formulation \eqref{e.weak.corr}, we can rewrite this formula as
\begin{equation}
\label{e.basis}
\xi \cdot \ahom \xi = \E[\xi \cdot \a \xi] - \E\Ll[D \phi \cdot \a D \phi\Rr].
\end{equation}
As was stated above, it is easy to compute the first term on the right side of~\eqref{e.basis}. This term is the linear part of the mapping going from coefficient fields to homogenized matrices. One way to understand why formula \eqref{e.basis} may be better suited than \eqref{e.charact.ahom} for the development of numerical methods is by considering the case of small ellipticity contrast. If we assume, say, that $1 \le \a \le 1 + \eta$ for some small constant $\eta > 0$, then the gradient $|D\phi|$ will be of the order of~$\eta$, and thus enters as a perturbation of the order of $\eta$ in the computation of $(\xi + D \phi) \cdot \a (\xi + D \phi)$. It is only after we take expectations that we can discover that the difference $|\ahom - \E[\a]|$ is in fact of the order of $\eta^2$. In contrast, this property is directly apparent in \eqref{e.basis}. The method presented in Theorem~\ref{t.main} can be understood as a way to leverage on this phenomenon in general, through a coarsening of the problem which progressively inherits the properties of the small-ellipticity-contrast regime.

\smallskip

Denoting by 
\begin{equation}
\label{e.def.f}
f(\a) := D \cdot \a \xi,
\end{equation}
and by $\la \cdot, \cdot \ra$ the scalar product in $L^2(\P)$, we can rewrite the second term on the right side of \eqref{e.basis} as
\begin{equation}  
\label{e.Dphi.flf}
\E\Ll[D\phi \cdot \a D \phi \Rr] = \lim_{\mu \to 0} \la f, (\mu + \L)^{-1} f \ra 
= \la f, \L^{-1} f \ra.
\end{equation}
Indeed, we have that $(\mu + \L)^{-1} f = \phi_\mu$ and 
\begin{equation*}  %\label{e.}
\la f, \phi_\mu \ra = - \E \Ll[ D \phi_\mu \cdot \a \xi \Rr] \xrightarrow[\mu \to 0]{} -\E \Ll[ D \phi \cdot \a \xi \Rr] = \E \Ll[ D \phi \cdot \a D \phi \Rr] .
\end{equation*}
The problem of computing $\la f, \L^{-1} f \ra$ is precisely of the type discussed in the first part of the paper. Since the homogenization of the operator $-\nabla \cdot \a \nabla$ is essentially equivalent to the large-scale diffusive behavior of the corresponding random walk (which itself can be reduced to the study of an additive functional~\cite{kipvar}), this is not surprising. We point out that no reference to random walks is necessary to prove the results of Section~\ref{s.variants.gk} there, which are the only results of the first part we will need here. Indeed, all the arguments in Section~\ref{s.variants.gk} only rely on the classical resolvent formula \eqref{e.resolvent} and on the property~\eqref{e.hyp.int}. That the latter is true follows from \eqref{e.Dphi.flf} and the fact that, by reversibility, $\la f, e^{-t\L} f \ra \ge 0$. Hence, the reader who may have skipped the first part of the paper can read Section~\ref{s.variants.gk} independently of the rest of the first part, with the understanding that $\L$ and $f$ are defined by \eqref{e.def.mclL} and \eqref{e.def.f} respectively.

\smallskip

In order to compute $\la f, \L^{-1} f \ra$, we  employ the stragegy described in Theorem~\ref{t.pgk} with $\mu_k \equiv 2^{-k}$. That is, we set $f_{-1} = f$ and, for each $k \in \N$, denote by $f_k \in L^2(\Omega)$ the unique solution to
\begin{equation}  
\label{e.def.fk.homog}
(2^{-k} + \L) f_k = 2^{-k} f_{k-1}.
\end{equation}
%The integral in \eqref{e.hyp.int} is equal to $\la f, \L^{-1} f \ra = \E \Ll[ D\phi \cdot \a D \phi \Rr]$, and is therefore finite. 
We thus infer from Theorem~\ref{t.pgk} that, for each $n \in \N$,
\begin{equation}  
\label{e.series.homog}
\la f, \L^{-1} f \ra = \sum_{k = 0}^n 2^k \Ll( \la f_{k-1}, f_k \ra + \la f_k, f_k \ra \Rr) + \la f_n, \L^{-1} f_n \ra.
\end{equation}
For each $k \ge -1$, we denote by $(v_k(x), x \in \Z^d)$ the stationary extension of $f_k$, that is,
\begin{equation*}  %\label{e.}
v_k(x) := f_k(\theta_x \, \a).
\end{equation*}
The dependence of $v_k$ on the coefficient field $\a$ is left implicit in the notation. Using \eqref{e.def.fk.homog}, one can check that $v_k$ solves \eqref{e.def.vk}.

\section{Error analysis}
\label{s.error}
The goal of this section is to prove the first part of Theorem~\ref{t.main}, which gives an estimate of the error in the approximation provided by $\hat{\sigma}^2_n$. Recall that $f$ is defined in \eqref{e.def.f} and the identities \eqref{e.basis} and \eqref{e.Dphi.flf}.
\begin{theorem}[Error estimate]
\label{t.eff} 
For each $\eps \in (0,\frac 1 2)$, there exists a constant $C < \infty$ such that the following holds. Fix $n \in \N$, and for each $k \le n$, denote
\begin{equation}  
\label{e.def.rk}
r_{k} := 2^{n-\Ll( \frac 1 2 - \eps \Rr) k}.
\end{equation}
We have
\begin{multline}
\label{e.eff}
\E\Ll[\Ll(\la f, \L^{-1} f \ra - \sum_{k = 0}^n \frac{1}{|B_{r_k}|} \sum_{x \in B_{r_k}} 2^k \Ll( v_{k-1}(x) v_k(x) + v_k^2(x) \Rr)\Rr)^2 \Rr]^\frac 1 2 \\\
\le  C 2^{-\frac{nd}{2}} . 
\end{multline}
\end{theorem}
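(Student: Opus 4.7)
The plan is to split the error into a deterministic truncation term and a mean-zero random sampling term. Applying Theorem~\ref{t.pgk} with $\mu_k = 2^{-k}$ (valid here because reversibility and the identity~\eqref{e.Dphi.flf} ensure~\eqref{e.hyp.int}), one rewrites
\begin{equation*}
\la f, \L^{-1} f \ra - \sum_{k=0}^n \frac{1}{|B_{r_k}|}\sum_{x \in B_{r_k}} 2^k \bigl(v_{k-1}(x) v_k(x) + v_k^2(x)\bigr) = R_n + S_n,
\end{equation*}
where $R_n := \la f_n, \L^{-1} f_n\ra$ is deterministic and $S_n := \sum_{k=0}^n 2^k \bigl(\E Y_k - \frac{1}{|B_{r_k}|}\sum_{x \in B_{r_k}} Y_k(x)\bigr)$ is centered in $\P$, with $Y_k(x) := v_{k-1}(x) v_k(x) + v_k^2(x)$. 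It then suffices to bound $|R_n|$ and $\|S_n\|_{L^2(\P)}$ separately by $C 2^{-nd/2}$.

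For $R_n$, I invoke Proposition~\ref{p.geom.step} with $\alpha = d/2 + 1$, giving $|R_n| \le C 2^{-n(\alpha - 1)} = C 2^{-nd/2}$. The hypothesis \eqref{e.assump.decay} with this exponent must be checked for $f = D \cdot \a\xi$: since reversibility gives $\la f, e^{-t\L} f\ra \ge 0$, and since $f$ is a discrete divergence, the quantity $\la f, e^{-t\L} f\ra$ is essentially a weighted discrete second-difference of the heat kernel of $-\nabla\cdot\a\nabla$ on $\Z^d$; Nash--Aronson Gaussian upper bounds for that heat kernel (equivalently, the dispersion relation yielding spectral density $\lambda^{d/2}$ at $f$ near $\lambda = 0$) then produce the decay $t^{-(d/2+1)}$.

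The substance of the proof is the scale-by-scale variance bound
\begin{equation}
\label{e.keyvar.plan}
\var\Bigl(\frac{1}{|B_{r_k}|}\sum_{x \in B_{r_k}} Y_k(x)\Bigr) \le C\, \frac{2^{kd/2}}{r_k^d}\, \|f_k\|_{L^2(\P)}^4.
\end{equation}
Two ingredients go into \eqref{e.keyvar.plan}. First, the stationary field $x\mapsto Y_k(x)$ has correlation length of order $2^{k/2}$: because $v_k$ is obtained from $v_{k-1}$ by applying the massive resolvent $2^{-k}(2^{-k}+\L)^{-1}$, whose kernel enjoys Nash--Aronson off-diagonal decay past scale $2^{k/2}$, a sensitivity analysis against single-edge resampling combined with the spectral gap of the product measure $\P$ yields $\sum_z |\cov(Y_k(0), Y_k(z))| \le C\, 2^{kd/2}\, \var(Y_k(0))$. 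Second, $Y_k(0)$ is \emph{quadratic} in the mean-zero pair $(v_{k-1}(0), v_k(0))$, so the sub-Gaussian concentration $\|v_k\|_{L^4(\P)}^4 \le C \|v_k\|_{L^2(\P)}^4$ upgrades the naive $L^\infty$ bound $\var(Y_k(0)) \le C \|f_k\|^2$ to the quadratic-response estimate $\var(Y_k(0)) \le C \|f_k\|^4$. The sub-Gaussian bound itself follows from iterated Efron--Stein on $\P$, using that the sensitivities $\partial_e v_k$ are controlled by the massive Green function of $2^{-k} - \nabla\cdot\a\nabla$ together with the uniform $L^\infty$ bound $\|v_k\|_{L^\infty(\Omega)} \le C$.

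Substituting $\|f_k\|^2 \le C\, 2^{-k(d/2+1)}$ from Proposition~\ref{p.size.fk} and $r_k^d = 2^{dn - d(1/2-\eps)k}$ into \eqref{e.keyvar.plan}, a direct computation yields $\var\bigl(2^k \fint_{B_{r_k}} Y_k\bigr) \le C\, 2^{-dn}\, 2^{-dk\eps}$. Minkowski's inequality then bounds $\|S_n\|_{L^2(\P)} \le \sum_{k=0}^n C\, 2^{-dn/2}\, 2^{-dk\eps/2} \le C(1-2^{-d\eps/2})^{-1}\, 2^{-dn/2}$, which is $O(2^{-nd/2})$ uniformly in $n$ as soon as $\eps > 0$. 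The main obstacle is plainly the variance bound~\eqref{e.keyvar.plan}: both its ingredients---the correlation-length estimate and the quadratic/sub-Gaussian improvement from $\|f_k\|^2$ to $\|f_k\|^4$---require a careful quantitative sensitivity analysis of the massive corrector equation $(2^{-k} + \L) v_k = 2^{-k} v_{k-1}$ against single-edge resampling, building on off-diagonal decay of the massive Green function at scale $2^{k/2}$ and concentration inequalities on product measures in the style of Gloria--Otto.
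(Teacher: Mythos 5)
Your decomposition is the paper's: write the error as the deterministic bias $R_n = \la f_n, \L^{-1} f_n\ra$ plus the mean-zero fluctuation, bound $R_n$ via Proposition~\ref{p.geom.step}, and bound the fluctuation by a scale-by-scale variance estimate; your target variance bound coincides with Proposition~\ref{p.spat.mix} after inserting $\|f_k\|_{L^2}^4 \simeq 2^{-2k(d/2+1)}$, and your final accounting (including the role of $\eps > 0$ in making the sum over $k$ geometric) is exactly the paper's.

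The gap is in how you justify the input hypothesis \eqref{e.assump.decay} with $\al = \tfrac d2 + 1$. You attribute the decay $\la f, e^{-t\L} f\ra \lesssim t^{-(d/2+1)}$ to Nash--Aronson bounds. That is not correct, and the issue is not merely one of rigor. Nash--Aronson and Delmotte--Deuschel gradient estimates are deterministic and pointwise: writing $u(t,0) = -\sum_y \nabla_y G(t,0,y)\cdot\a(y)\xi$ and using $\sum_y|\nabla_y G(t,0,y)|\lesssim t^{-1/2}$ gives $|u(t,0)|\lesssim t^{-1/2}$ uniformly, hence only $\la f, e^{-t\L}f\ra = \E[u(t,0)^2]\lesssim t^{-1}$. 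The extra factor $t^{-d/2}$ requires CLT-type cancellations in the spatial sum defining $u(t,0)$, which rely on the i.i.d.\ structure of the coefficients; this is precisely the Gloria--Neukamm--Otto theorem that the paper cites as Proposition~\ref{p.moment.bound}, and it is a deep result, not a consequence of Gaussian heat kernel upper bounds (your ``spectral density $\lambda^{d/2}$'' heuristic is exactly the statement of that theorem, not a derivation of it). The same machinery underlies the sensitivity analysis in the variance bound: the paper uses the annealed weighted gradient Green function bounds of Proposition~\ref{p.gradient.green} together with Lemma~\ref{l.gradient} to control $\partial_e v_k$, so your proposal to obtain sub-Gaussian concentration from iterated Efron--Stein plus massive Green function decay carries the same missing ingredient. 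You rightly single out the variance bound as the main technical obstacle, but the bias bound shares the \emph{same} non-trivial input and cannot be dispatched with classical heat kernel theory alone.
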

\begin{remark}  
\label{e.sqrt.log}
The proof given below in fact gives the existence of a constant $C < \infty$ such that for any choice of $(r_k)_{0 \le k \le n}$, we have the inequality \eqref{e.eff} with the right side replaced by
\begin{equation*}  %\label{e.}
C 2^{-\frac {dn}{2}} + C  \sum_{k = 0}^n 2^{-\frac {dk} 4} r_k^{-\frac d 2}. 
\end{equation*}
Fixing $r_k$ according to \eqref{e.def.rk} for some $\eps > 0$ indeed allows to bound the sum above by a constant times $2^{-\frac{dn}2}$, but other choices are possible, such as, say,
\begin{equation*}  %\label{e.}
r_k = 2^{n - \frac k 2} \Ll(1+ \frac{k^\frac 3 d}{100}\Rr).
\end{equation*}
For $\eps = 0$, we obtain an upper bound of the form $C n 2^{-\frac{nd}{2}}$. I conjecture that the sharp upper bound in \eqref{e.eff} is
\begin{equation}
\label{e.cond.rk}
C 2^{-\frac {dn}{2}} + C 2^{-\frac{nd}{2}} \Ll(\sum_{k = 0}^n 2^{d \Ll(\frac k 2 - n\Rr)} r_k^d \Rr)^{\frac 1 2}, 
\end{equation}
which for the choice of $r_k$ as in \eqref{e.def.rk} with $\eps = 0$ gives the upper bound $C n^\frac 1 2 2^{-\frac{nd}{2}}$. 
As will be seen in the next section, for $r_k$ chosen as in \eqref{e.def.rk}, the asymptotic complexity of the method described in Theorem~\ref{t.main} does not depend on the choice of $\eps \in [0,\frac{d-1}{2d})$. 
\end{remark}

In order to prove Theorem~\ref{t.main}, our first task is to estimate the size of the remainder in the series expansion displayed in \eqref{e.series.homog}. 

\begin{proposition}[Remainder estimate]
\label{p.rest}
There exists a constant $C < \infty$ such that for every $n \in \N$,
\begin{equation*}  %\label{e.}
0 \le \la f_n,  f_n \ra \le C 2^{-n \Ll( \frac d 2 + 1 \Rr) }
\end{equation*}
and 
\begin{equation*}  %\label{e.}
0 \le \la f_n, \L^{-1} f_n \ra \le  C 2^{-\frac{nd}2}.
\end{equation*}
\end{proposition}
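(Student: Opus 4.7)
My plan is to derive both inequalities from the abstract estimates of Section~\ref{s.variants.gk}, specialised to the reversible operator $\L$ defined in \eqref{e.def.mclL} and the function $f = D \cdot \a \xi$, with the key input being the polynomial decay \eqref{e.assump.decay} at the exponent $\al = \frac{d}{2}+1$. Reversibility means $\L^* = \L$, so in particular $f_k^* = f_k$ for every $k$ in \eqref{e.def.fk.homog}, and Propositions~\ref{p.geom.step} and \ref{p.size.fk} apply directly.

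The main step is to verify \eqref{e.assump.decay} with $\al = \tfrac d 2 + 1$: for every $t \ge 0$,
$$ 0 \le \la f, e^{-t \L} f \ra \le C(1+t)^{-(d/2+1)}.$$
The lower bound is immediate from reversibility since $\la f, e^{-t\L} f \ra = \|e^{-t\L/2} f\|_{L^2(\P)}^2$. For the upper bound I would use two standard ingredients. First, the Nash--Aronson-type on-diagonal heat-kernel bound $p_t(x,y;\a) \le C t^{-d/2}$ for the random walk in environment $\a$, uniform in $\a$, together with the accompanying discrete gradient estimate $|\nabla_y p_t(x,y;\a)| \le C t^{-(d+1)/2}$. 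Second, the fact that $f$ is a stationary divergence: a discrete summation by parts rewrites the stationary extension of $e^{-t\L/2} f$ at the origin as a contraction of $\nabla_y p_{t/2}(0,y;\a)$ against $\a(y)\xi$. Squaring, taking expectation, and exploiting stationarity converts each $\nabla_y p$ into the desired factor $t^{-1/2}$, giving the total decay $t^{-(d/2+1)}$. This is the classical $H^{-1}$-type decay estimate for stationary divergence functions; it is implicit in \cite{kipvar} and made quantitative in \cite{vardecay}.

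With \eqref{e.assump.decay} established at $\al = \tfrac d 2 + 1$, the second bound of the proposition follows at once from Proposition~\ref{p.geom.step}, which gives $|\la f_n, \L^{-1} f_n \ra| \le C 2^{-n(\al-1)} = C 2^{-nd/2}$. The upper bound in the first statement follows from Proposition~\ref{p.size.fk} (geometric step-size case), yielding $\la f_n, f_n \ra = |\la f_n^*, f_n \ra| \le C 2^{-n(d/2+1)}$. Non-negativity is automatic: since $\L$ is self-adjoint and non-negative, each $R_{\mu_k}$ is a positive self-adjoint contraction, so $f_n \in L^2(\P)$ is well-defined and $\la f_n, f_n \ra = \|f_n\|_{L^2(\P)}^2 \ge 0$; for the other quantity, combining \eqref{e.def.rest}, reversibility and monotone convergence under \eqref{e.hyp.int} yields the representation
$$ \la f_n, \L^{-1} f_n \ra = \int_0^{+\infty} \|e^{-t\L/2} f_n\|_{L^2(\P)}^2 \, \d t \ge 0, $$
which is manifestly non-negative. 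The main obstacle is genuinely the first step: although the exponent $\tfrac d 2 + 1$ is classical and is explicitly announced by the author at the beginning of Section~\ref{s.examples}, proving it with the sharp constant in the discrete i.i.d.\ setting requires a careful interplay between the Nash--Aronson gradient estimate and the divergence structure of $f$; without it, the rest of the argument collapses to the weaker bounds one would obtain from \eqref{e.hyp.int} alone.
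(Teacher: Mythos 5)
Your overall architecture matches the paper's: establish the decay hypothesis \eqref{e.assump.decay} with exponent $\al = \frac d 2 + 1$, then invoke Propositions~\ref{p.geom.step} and \ref{p.size.fk} (geometric step-size case) plus the two non-negativity observations, which are handled correctly. The paper's proof of Proposition~\ref{p.rest} is indeed a one-liner once Proposition~\ref{p.moment.bound} (i.e.\ \cite[Theorem~1]{GNO}) is available, because reversibility gives $\la f, e^{-t\L}f\ra = \|e^{-t\L/2}f\|_{L^2(\P)}^2 \le C(1+t)^{-(\frac d 2 + 1)}$ directly from the $p=2$ case of that theorem.

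The genuine gap is in your sketch of how to \emph{prove} the decay with exponent $\al = \frac d 2 + 1$. You propose combining the Nash--Aronson on-diagonal bound $p_t \lesssim t^{-d/2}$ with a ``discrete gradient estimate $|\nabla_y p_t(x,y;\a)| \lesssim t^{-(d+1)/2}$'', as if this were a uniform-in-$\a$ analytic fact. It is not: for merely uniformly elliptic discrete coefficients there is no deterministic pointwise bound of this form, and the quantity $\E\bigl[|\nabla G(t,x,e)|^2\bigr]^{1/2} \lesssim t^{-(d+2)/2}$ is a stochastic result (part of \cite[Theorems~1 and 3]{GNO}, recalled as Proposition~\ref{p.gradient.green} and Lemma~\ref{l.gradient}), not a by-product of Nash--Aronson. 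More importantly, even granting such a gradient bound, your proposed ``square, take expectation, exploit stationarity'' step would only yield $\la f, e^{-t\L}f\ra \lesssim t^{-1}$: after summation by parts you get a double sum over $y,z$ in a ball of radius $\sqrt t$, and the crude bound $\sum_{y,z}\,t^{-(d+1)}\,e^{-c(|y|^2+|z|^2)/t}\sim t^{-1}$ gives $\al=1$, far short of $\frac d 2 + 1$. The correct exponent hinges on CLT-type cancellations across the off-diagonal terms $y\neq z$, which are extracted from the i.i.d.\ structure of the coefficients via spectral-gap/Efron--Stein arguments -- this is precisely the deep content of \cite[Theorem~1]{GNO}, and is not recoverable from ``stationarity.'' Your concluding remark that this first step is the real obstacle is right; what is wrong is the mechanism you propose for overcoming it. The clean fix is to replace the heat-kernel heuristic with a direct citation of Proposition~\ref{p.moment.bound}, which is exactly what the paper does.
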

Proposition~\ref{p.rest} is an immediate consequence of Propositions~\ref{p.geom.step} and \ref{p.size.fk}, together with the second moment bound proved in \cite[Theorem~1]{GNO} and recalled in the next proposition.
\begin{proposition}[\cite{GNO}]
\label{p.moment.bound}
Recall that $f(\a) = D \cdot \a \xi$. For each $p \in [1,\infty)$, there exists a constant $C < \infty$ such that for every $t \ge 0$,
\begin{equation*}  %\label{e.}
\E \Ll[ \Ll|e^{-t\L} f\Rr|^p \Rr] ^\frac 1 p \le C (1 + t)^{- \Ll( \frac d 4 + \frac 1 2 \Rr) }.
\end{equation*}
\end{proposition}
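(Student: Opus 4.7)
The plan is to prove the sharp $L^p(\P)$ decay of $g_t := e^{-t\L} f$ for $f = D \cdot \a \xi$ by implementing the Gloria--Neu--Otto strategy: combine an Efron--Stein spectral-gap inequality for the product measure $\P$ with sharp deterministic heat-kernel estimates for the random parabolic operator $\partial_t - \nabla\cdot \a\nabla$, then bootstrap to higher moments. Note first that $\E[g_t] = 0$, since $\E[f] = 0$ (as $f$ is a discrete divergence) and the semigroup preserves integrals through its adjoint acting on constants.

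The first step is to control $\var(g_t)$ by computing the discrete vertical derivative $\partial_e g_t$ for each edge $e \in \B$. Lifting to the stationary extension, $\tilde g(t,y) := g_t(\theta_y \a)$ solves $\partial_t \tilde g - \nabla \cdot \a \nabla \tilde g = 0$ on $\Z^d$ with initial datum $\nabla \cdot \a \xi$, so by discrete integration by parts $\tilde g(t,0) = -\sum_{y \in \Z^d} \nabla_y P_t^\a(0,y) \cdot \a(y)\xi$, where $P_t^\a$ denotes the parabolic Green function of $-\nabla \cdot \a \nabla$. Differentiating in $\a_e$ yields two contributions: a direct piece coming from the explicit appearance of $\a(y)$ at the endpoints of $e$, and a Duhamel piece of the form $\int_0^t \nabla P_{t-s}^\a(0,\cdot) \nabla\nabla P_s^\a(\cdot,y)\, ds$ localized at the edge $e$.

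The second step is purely deterministic: by Nash--Aronson Gaussian upper bounds together with Caccioppoli and Meyers gradient estimates, uniform over $\a \in [1,\Lambda]^\B$, one has $\sum_{y \in \Z^d} |\nabla_y P_s^\a(0,y)|^2 \le C(1+s)^{-d/2-1}$ and analogous $L^2$ control of second gradients. Summing $|\partial_e g_t|^2$ over $e \in \B$ and using the semigroup composition property then yields $\sum_{e \in \B} (\partial_e g_t)^2 \le C (1+t)^{-d/2-1}$ pointwise in $\a$. Applying Efron--Stein, $\var(g_t) \le \sum_{e} \E[(\partial_e g_t)^2]$, gives the case $p = 2$ with the claimed exponent $d/4 + 1/2$.

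For $p > 2$, order the edges arbitrarily and run the Doob martingale of $g_t$ against the filtration generated by $(\a_e)_{e \in \B}$; the martingale increments are dominated by the vertical derivatives controlled above. Burkholder--Davis--Gundy (or equivalently, the modified log-Sobolev inequality for bounded i.i.d.\ product measures) yields $\|g_t\|_{L^p(\P)} \le C_p \|(\sum_e (\partial_e g_t)^2)^{1/2}\|_{L^p(\P)}$, and the pathwise bound of the preceding step turns this into the desired decay with only a $p$-dependent prefactor. The main technical obstacle is the deterministic input: pointwise Gaussian heat-kernel bounds and their gradient counterparts for $P_s^\a$, uniformly over $\a \in [1,\Lambda]^\B$ on the discrete lattice, which require a quantitative form of the De Giorgi--Nash--Moser regularity theory. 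This work is precisely what is assembled in \cite{GNO}, whose Theorem~1 we invoke directly.
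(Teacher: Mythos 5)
The paper does not prove this proposition at all: it is recalled verbatim from \cite[Theorem~1]{GNO}, and your closing sentence (``we invoke \cite{GNO} directly'') is therefore exactly what the paper does, so the statement is covered. The problem is that the proof sketch you present as a justification of that theorem contains a step that would fail, and it should not be advertised as a proof. The flaw is in your second step: the claim that
\begin{equation*}
\sum_{e \in \B} \left(\partial_e\, e^{-t\L}f\right)^2 \le C(1+t)^{-\frac d 2 - 1} \qquad \text{pointwise in } \a,
\end{equation*}
follows from \emph{deterministic} Nash--Aronson, Caccioppoli and Meyers estimates is not correct. The direct term $|\nabla G(t,\cdot,e)|$ can indeed be handled deterministically, but the Duhamel term has the form $\int_0^t |\nabla G(t-s,x,e)|\,|\nabla u(s,e)|\,\d s$ (this is precisely \eqref{e.decomp.deu} in the paper), and the only decay of $\nabla u(s,e)$ available uniformly over $\a \in [1,\Lambda]^\B$ is the dimension-independent rate coming from $\|u(s,\cdot)\|_\infty \lesssim (1+s)^{-1/2}$ and Caccioppoli. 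Plugging that into the Duhamel integral caps the bound at roughly $(1+t)^{-2}$ (up to logarithms), which for $d \ge 3$ is far larger than the target $(1+t)^{-\frac d2 -1}$. The rate $t^{-(\frac d4+\frac12)}$ is a CLT-type stochastic cancellation effect and cannot be captured by any quenched, coefficient-uniform estimate: deterministically $e^{-t\L}f$ itself only decays like $t^{-1/2}$, independently of $d$.

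What actually closes the argument in \cite{GNO} is a bootstrap that your sketch omits. The sharp decay $\E[|\nabla u(s,e)|^2] \lesssim (1+s)^{-\frac d2-2}$ needed in the Duhamel term is essentially equivalent to the statement being proved --- note that the paper's Lemma~\ref{l.gradient} \emph{derives} it from Proposition~\ref{p.moment.bound}, so using it as an input without a buckling/Gronwall-type argument is circular. Moreover, the weighted gradient bound on the parabolic Green function that makes the spatial sums converge at the optimal rate is an \emph{annealed} estimate (the paper's Proposition~\ref{p.gradient.green}, i.e.\ \cite[Theorem~3]{GNO}), a genuinely stochastic result that does not follow from quenched De Giorgi--Nash--Moser theory; pointwise-in-space gradient bounds of Gaussian type simply fail for general uniformly elliptic coefficients. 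Your treatment of higher moments (a $p$-version of Efron--Stein via the martingale decomposition or Boucheron--Bousquet--Lugosi--Massart-type inequalities) is fine in principle, but it rests on the pathwise bound above, which is the part that breaks. So either cite \cite[Theorem~1]{GNO} and stop, as the paper does, or acknowledge that reproducing its proof requires the annealed Green function estimates together with the bootstrap, not a deterministic plug-in.
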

We rely on the spatial mixing property of the random field in \eqref{e.mixing.field} to compute the expected values in the series expansion in \eqref{e.series.homog}. This mixing property is quantified by the next proposition. For every random variable $X$, we denote by $\var[X] := \E[(X-\E[X])^2]$ the variance of $X$.

\begin{proposition}[Spatial mixing of $v_k$]
\label{p.spat.mix}
There exists $C(d,\Lambda) < \infty$ such that for every $k \in \N$ and $r \ge 0$,
\begin{equation}  
\label{e.spat.mix}
\var \Ll[ \frac{1}{|B_r|} \sum_{x \in B_r} \Ll(v_{k-1}(x) v_k(x) + v^2_k(x) \Rr)\Rr] \le C 2^{-2k \Ll( \frac d 2 + 1 \Rr) } \Ll( 1+2^{-\frac k 2} r  \Rr)^{-d}.
%\Ll( \frac{r}{2^{\frac k 2}} \wedge 1 \Rr) ^{-\frac d 2}.
\end{equation}
\end{proposition}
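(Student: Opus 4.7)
My plan is to combine two ingredients: (i) $L^p$ moment bounds on $v_k$ that control the typical size of the summand $Y(x) := v_{k-1}(x) v_k(x) + v_k^2(x)$, and (ii) a spatial decorrelation of $Y$ reflecting the effective correlation length $\mu_k^{-1/2} = 2^{k/2}$, which is the decay scale of the Green's function of the massive operator $\mu_k + \L$. The announced bound \eqref{e.spat.mix} then emerges as an interpolation between $\var[Y(0)]$ (relevant for $r \ls 2^{k/2}$) and the CLT-type gain $\var[Y(0)] \cdot (2^{k/2}/r)^d$ (relevant for $r \gg 2^{k/2}$), which is exactly what one would expect from a spatial average of a field with correlation length $2^{k/2}$.

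\emph{Step~1: moment bound.} Using the representation $v_k = E\Ll[e^{-S_k \L} f\Rr]$ exploited in the proofs of Propositions~\ref{p.geom.step} and~\ref{p.size.fk}, where $S_k = T_0 + \cdots + T_k$ and the $T_j$ are independent exponentials of parameter $\mu_j = 2^{-j}$, Minkowski's inequality in $L^p(\P)$ combined with Proposition~\ref{p.moment.bound} and \eqref{e.Sn.moment} gives
\[
\|v_k\|_{L^p(\P)} \le C_p \, E\Ll[(1+S_k)^{-(d/4+1/2)}\Rr] \le C_p \, 2^{-k(d/4+1/2)} \qquad (p \in [1,\infty)).
\]
A Cauchy--Schwarz bound on $Y(0)$ then yields $\|Y(0)\|_{L^2(\P)} \le C \, 2^{-k(d/2+1)}$, which already implies \eqref{e.spat.mix} in the regime $r \ls 2^{k/2}$, where the right-hand side is of order $2^{-2k(d/2+1)}$.

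\emph{Step~2: decorrelation.} In the complementary regime $r \gg 2^{k/2}$, I would apply the spectral-gap (Efron--Stein) inequality for the product measure $\P$ to the centered spatial average $\bar Y := |B_r|^{-1} \sum_{x \in B_r} Y(x)$. Differentiating the equation $(\mu_k + \L) f_k = \mu_k f_{k-1}$ with respect to a single coefficient $\a_e$ expresses the Glauber derivative $\partial_{\a_e} v_k(x)$ as $(\mu_k + \L)^{-1}$ applied to a quantity localized near the edge $e$, so the stationary Green's function $G_{\mu_k}$ of $\mu_k + \L$ enters the picture; this function is known to decay exponentially at scale $\mu_k^{-1/2} = 2^{k/2}$. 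Combined with the moment bounds of Step~1, summing the resulting Efron--Stein terms over edges produces
\[
\var[\bar Y] \ls \frac{2^{kd/2}}{|B_r|} \, 2^{-2k(d/2+1)}.
\]
Combining this with $\var[\bar Y] \le \|Y(0)\|_{L^2(\P)}^2 \ls 2^{-2k(d/2+1)}$ from Step~1, via the elementary inequality $\min(a,b) \le 2ab/(a+b)$ for $a,b>0$, together with $|B_r| \simeq r^d$, yields \eqref{e.spat.mix}.

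\emph{Main obstacle.} The decisive ingredient is the sensitivity analysis of Step~2: one has to estimate $\partial_{\a_e} v_k$ in a way that simultaneously captures its localization at scale $2^{k/2}$ \emph{and} preserves the sharp $L^p$ scaling $2^{-k(d/4+1/2)}$ obtained for $v_k$ itself. A crude pointwise Yukawa-type bound on $G_{\mu_k}$ gives the right exponential decay but loses the fine scaling in $k$, so some care is needed. The cleanest route is probably to argue directly on the representation $v_k = E[e^{-S_k \L} f]$: the derivative $\partial_{\a_e}$ commutes with the expectation over $S_k$, and one can then integrate the heat-kernel sensitivity estimates of \cite{GNO} against the density of $S_k$, leveraging the same time-averaging mechanism that was used in Step~1.
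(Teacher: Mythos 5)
Your proposal follows essentially the same route as the paper: Step~1's moment bound matches the paper's Steps~1--2a (fourth moments via the representation $v_k = E[u(S_k,\cdot)]$ and Proposition~\ref{p.moment.bound}, covering $r \lesssim 2^{k/2}$), and Step~2's Efron--Stein sensitivity argument, carried out via the heat-kernel decomposition $\partial_e u$ from \cite{GNO} integrated against the law of $S_k$, is precisely the paper's Steps~3--4. The only caveat is that your Step~2 remains a sketch: the decisive weighted fourth-moment estimate $\E\big[\big(\sum_x w_e^\alpha(t,x)|\partial_e u(t,x)|^2\big)^2\big]^{1/2} \le C(1+t)^{-(d/2+1)}$ (the paper's \eqref{e.bound1.ut}) is exactly where the real work lies, requiring the decomposition $|\partial_e u| \lesssim |\nabla G| + \int_0^t |\nabla G(t-s,\cdot,e)\, g(s,e)|\,\d s$ from \cite[Lemma~3]{GNO}, the weighted Green-function bound of Proposition~\ref{p.gradient.green}, and a H\"older split with a carefully chosen weight exponent $\beta$; you correctly identify this as the obstacle and point to the right tools, but the argument would need to be carried out.
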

As will be seen shortly, we have 
\begin{equation}  
\label{e.announce.fourth}
\E[v_k^4(x)] = \E[f_k^4] \le C 2^{-2k \Ll( \frac d 2 + 1 \Rr) },
\end{equation}
which is the same scaling as the square of the second moment, see Proposition~\ref{p.rest}. Roughly speaking, Proposition~\ref{p.spat.mix} shows that the correlation length of the random field $x \mapsto v_{k-1}(x) v_k(x) + v_k^2(x)$ is of the order of $2^{\frac k 2}$. Indeed, the fact that the left side of \eqref{e.spat.mix} is bounded by $C 2^{-2k \Ll( \frac d 2 + 1 \Rr)}$ is a direct consequence of the bound \eqref{e.announce.fourth}. The additional term on the right side of \eqref{e.spat.mix} witnesses CLT-type cancellations at length scale $2^{\frac k 2}$. 

\smallskip

In order to prove Proposition~\ref{p.spat.mix}, we define, for each $t \ge 0$ and $x \in \Z^d$,
\begin{equation*}  %\label{e.}
u(t,x) := (e^{-t\L} f)(\theta_x \a).
\end{equation*}
The function $u$ solves the parabolic equation
\begin{equation}
\label{e.space.parab}
\Ll\{
\renewcommand{\arraystretch}{1.5}
\begin{array}{ll}  %\label{}
\partial_t u   = \nabla \cdot \a \nabla u \quad & \text{ on } \R_+ \times \Zd, \\
u(0,\cdot) = \nabla \cdot \a \xi \quad & \text { on }  \Zd.
\end{array}
\Rr.
%\label{e.parab}
%\Ll\{
%\begin{array}{ll}  %\label{}
%\partial_t u = D \cdot \a D u \\
%u(\a,0) = D \cdot \a \xi.
%\end{array}
%\Rr.
\end{equation}
We also denote by $(G(t,x,y), t \ge 0, x,y \in \Zd)$ the parabolic Green function associated with the operator $-\nabla \cdot \a\nabla$.
%The next proposition quantifies the mixing properties of the random field $x \mapsto u(t,x)$, for each fixed $t \ge 0$. We then deduce Proposition~\ref{p.spat.mix} using the representation of $v_k$ as a suitable time average of $u$. 
%\begin{proposition}[Spatial mixing of $u$]
%\label{p.mixing}
%There exists $C(d,\Lambda) < \infty$ such that for every $r, t \ge 0$,
%\begin{equation}  
%\label{e.mixing}
%\var \Ll[\frac 1 {|B_r|} \sum_{x \in B_r} u^2(t,x)\Rr] \le C (1+t)^{-(d+2)} \Ll( 1 + t^{-\frac 1 2} r \Rr)^{-d}.
%\end{equation}
%\end{proposition}
%\begin{remark}
%A weaker version of Proposition~\ref{p.mixing}, with \eqref{e.mixing} replaced by
%\begin{equation}  
%\label{e.alt.rhs}
%\var \Ll[\frac 1 {|B_r|} \sum_{x \in B_r} u^2(t,x)\Rr]  \le C (1 + t)^{-(d+2)} \Ll( 1 + \frac{r}{t^{\frac 1 2} \log (2+t)}\Rr) ^{-d},
%\end{equation}
%can be obtained as an easy consequence of Proposition~\ref{p.moment.bound}. Indeed, bounding the variance by the second moment, using Jensen's inequality and then Proposition~\ref{p.moment.bound} with $p = 4$, we get
%\begin{equation}  \label{e.jensen}
%\var \Ll[\frac 1 {|B_r|} \sum_{x \in B_r} u^2(t,x)\Rr] \le \E[u^4(t,0)] \le C (1 + t)^{-(d + 2)}.
%\end{equation}
%Moreover, standard heat kernel bounds allow to show that $u(t,x)$ depends only on the coefficient field in a ball of radius $C t^\frac 1 2 \log (2 + t)$ centered around $x$, up to an error of $t^{-100d}$, say, for a constant $C(d,\Lambda) < \infty$. This allows to leverage on CLT cancellations on scale $t^\frac 1 2 \log(2+t)$ and obtain \eqref{e.alt.rhs}.
%\end{remark}
We will make use of \cite[Theorem~3]{GNO}, which we now recall. For each $e \in \B$, $x \in \Zd$ and $t \ge 0$, we introduce the weight function
\begin{equation*}  %\label{e.}
w_e(t,x) := 1 + \frac{|x-e|}{(1+t)^\frac 1 2},
\end{equation*}
where we use the notation $|x-e|$ to denote the distance between $x$ and the endpoint of the edge $e$ closest to $x$. 
\begin{proposition}[\cite{GNO}]
\label{p.gradient.green}
There exists an exponent $p_0(d,\Lambda) > 1$ and, for every $\al < \infty$, a constant $C(d,\Lambda,\al) < \infty$ such that for every $1 \le p \le p_0$, $t \ge 0$ and $e \in \B$,
\begin{equation*}  %\label{e.}
\Ll(\sum_{x \in \Zd} \Ll(w_e^\al(t,x) |\nabla G(t,x,e)|^2 \Rr)^p \Rr)^\frac 1 {2p} \le C (1 + t)^{- \Ll( \frac d 2 + \frac 1 2 \Rr) + \frac d 2 \frac 1 {2p}}.
\end{equation*}
\end{proposition}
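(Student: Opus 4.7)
My plan is to combine three purely deterministic ingredients: an energy-based unweighted $\ell^2$ decay estimate for $\nabla G(t,\cdot,e)$, Davies' exponential-weight trick to introduce Gaussian spatial decay, and Meyers' self-improvement of integrability to pass from $\ell^2$ to $\ell^{2p}$ for some $p_0(d,\Lambda)>1$. Since the statement is deterministic in $\a$ (no expectation appears), no probability or mixing input is required.

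First I would establish the baseline bound $\sum_{x}|\nabla G(t,x,e)|^2 \le C(1+t)^{-(d/2+1)}$, which is the case $p=1$, $\al=0$. Splitting $G(t,\cdot,e) = e^{-(t/2)L}\,G(t/2,\cdot,e)$ and combining the $\ell^1 \to \ell^2$ Nash--Aronson estimate $\|G(t/2,\cdot,e)\|_{\ell^2} \le C t^{-d/4}$ with the spatial regularization bound $\|\nabla e^{-sL}v\|_{\ell^2} \le C s^{-1/2}\|v\|_{\ell^2}$ (an immediate consequence of the spectral theorem applied to the self-adjoint nonnegative operator $L$ together with uniform ellipticity) yields this estimate. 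To bring in the weight $w_e^\al$, I would then apply Davies' method: for a bounded Lipschitz $\zeta : \Z^d \to \R$ with $\|\nabla \zeta\|_\infty \le \beta$, set $\tilde u := e^\zeta G(\cdot,\cdot,e)$ and compute the modified parabolic equation; the commutator contributes only $C\beta^2 \|\tilde u\|_2^2$ to the energy inequality. Choosing $\zeta(x) = \beta(|x-e| \wedge K)$ with $\beta \sim (1+t)^{-1/2}$ and letting $K \to \infty$ produces the Gaussian-weighted bound $\sum_x e^{c|x-e|/(1+t)^{1/2}}|\nabla G(t,x,e)|^2 \le C(1+t)^{-(d/2+1)}$. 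Any polynomial weight $w_e^{\gamma}$ is dominated by this exponential weight up to a $\gamma$-dependent multiplicative constant.

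To pass from $\ell^2$ to $\ell^{2p}$ for some $p_0(d,\Lambda)>1$, I would invoke the parabolic Meyers self-improvement: on any parabolic cylinder $Q_r = (t_0 - r^2, t_0) \times B_r(x_0)$ not containing the singularity $(0,e)$, the Green function solves the homogeneous equation $\partial_t G = \nabla \cdot \a \nabla G$, and there exists an exponent $p_0(d,\Lambda)>1$ such that
\begin{equation*}
\Ll(\fint_{Q_r}|\nabla G|^{2p_0}\Rr)^{1/(2p_0)} \le C \Ll(\fint_{Q_{2r}}|\nabla G|^{2}\Rr)^{1/2}.
\end{equation*}
Covering $\Z^d \setminus \{e\}$ by a dyadic family of parabolic cylinders of radius $r \sim (1+t)^{1/2}$, applying Meyers on each, and using that $w_e^\al$ is essentially constant on each such cylinder, one assembles the desired global weighted $\ell^{2p_0}$ bound by summing local estimates against the weighted $\ell^2$ estimate of the previous step. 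The exponent $-(d/2+1/2) + d/(4p)$ is precisely what the parabolic volume factor $r^{d+2}$ at scale $r \sim (1+t)^{1/2}$ produces when converting an averaged $L^{2p}$ norm on cylinders into an unaveraged sum over $\Z^d$.

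The main obstacle is the Meyers step: $p_0 - 1$ is only a small self-improvement whose size degenerates as $\Lambda \to \infty$, so $p_0$ cannot be prescribed a priori, and inserting the weight $w_e^\al$ into the Meyers inequality requires either a weighted Meyers argument (treating $w_e^\al$ as a Muckenhoupt-type weight that is locally nearly constant at the parabolic scale) or, more robustly, the dyadic covering sketched above with careful bookkeeping of the weight on each piece. The small-time regime $t \le 1$ should be handled separately using standard elliptic regularity for the discrete resolvent $(1 + L)^{-1}$, where the inequality reduces to direct summation against the pointwise exponential decay of the discrete Green kernel at $O(1)$ time.
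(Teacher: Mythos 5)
A preliminary remark: the paper does not prove this statement at all --- it is quoted (in square-root form) from \cite[Theorem~3]{GNO}, so the only comparison available is with that external proof, whose general architecture (quenched energy estimates, exponential weights, and a Meyers/Gehring self-improvement producing an unprescribable exponent $p_0(d,\Lambda)>1$) your sketch reproduces in broad strokes. Your observation that the bound is deterministic in $\a$ is correct and is exactly how the paper uses it (``deterministically bounded'' in Step~3 of the proof of Proposition~\ref{p.spat.mix}). Your baseline step ($p=1$, $\al=0$) via $\nabla G(t,\cdot,e)=\nabla e^{-(t/2)L}G(t/2,\cdot,e)$, Nash's on-diagonal bound and the spectral estimate $\|\nabla e^{-sL}\|_{\ell^2\to\ell^2}\lesssim s^{-1/2}$ is fine, as is the domination of the polynomial weight $w_e^\al$ by an exponential weight at rate $(1+t)^{-1/2}$.

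The genuine gap is that both of your remaining steps control time-averaged quantities, whereas the proposition is a single-time-slice estimate, and it is the fixed-time version that is needed inside the Duhamel integral in the proof of Proposition~\ref{p.spat.mix}. Concretely: (i) the Davies energy inequality for $\tilde u=e^\zeta G$ bounds $\sup_{s\le t}\|\tilde u(s)\|_{\ell^2}^2$ and $\int_0^t\sum_x e^{2\zeta}|\nabla G(s,x,e)|^2\,\d s$, not $\sum_x e^{2\zeta}|\nabla G(t,x,e)|^2$; an additional (standard but necessary) argument is required, e.g.\ differentiating the weighted Dirichlet energy $\sum_x e^{2\zeta}\nabla G\cdot\a\nabla G$ in time and absorbing the commutator by Gronwall (possible since $\beta^2 t\lesssim1$), a weighted, quenched analogue of Lemma~\ref{l.gradient}, so that this energy is nearly nonincreasing and can be compared with its time average. (ii) More seriously, interior parabolic Meyers improves the integrability of $\nabla G$ only as a space-time average over cylinders $(t-cr^2,t)\times B_r$; an $L^{2p}$ space-time bound cannot be evaluated on the slice $\{t\}\times\Zd$, so your covering argument as written yields a time-averaged version of the proposition, not the proposition itself. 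A workable repair is to regard the slice as the elliptic problem $-\nabla\cdot\a\nabla G(t,\cdot,e)=-\partial_t G(t,\cdot,e)$ and apply an elliptic Meyers estimate with right-hand side at scale $r\sim(1+t)^{1/2}$: since $\|\partial_t G(t,\cdot,e)\|_{\ell^2}\lesssim(1+t)^{-(\frac d4+1)}$ (with a weighted version available by the same Davies argument applied to $Le^{-tL}\delta_e$), this extra term contributes of order $r^{\frac d{2p}+1-\frac d2}\|\partial_t G(t,\cdot,e)\|_{\ell^2}\lesssim(1+t)^{-\Ll(\frac d2+\frac12\Rr)+\frac d2\frac1{2p}}$, which is exactly the claimed rate, so the exponent arithmetic closes. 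With these two repairs (your dyadic bookkeeping of the weight is fine, since $w_e^\al$ is comparable to a constant on each cylinder of scale $\sqrt{1+t}$), the plan is sound; the regime $t\le1$ is trivial in the discrete setting from the super-exponential off-diagonal decay of the kernel, with no need for resolvent estimates.
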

We will also rely on the following observation.
\begin{lemma}
\label{l.gradient}
There exists $C(d,\Lambda) < \infty$ such that for every $t \ge 0$ and $e \in \mathbb{B}$,
\begin{equation*}  %\label{e.}
\E\Ll[|\nabla u(t,e)|^2\Rr] \le C (1 + t)^{- \Ll( \frac d 2 + 2 \Rr)} .
\end{equation*}
\end{lemma}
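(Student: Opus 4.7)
The plan is to reduce $\E[|\nabla u(t,e)|^2]$ to an $L^2(\Omega)$-norm of an environmental gradient of a semigroup iterate, and then combine a standard analytic-semigroup estimate with Proposition~\ref{p.moment.bound}.

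First I would observe that, by the intertwining between the ``physical'' operator $L=-\nabla\cdot\a\nabla$ on stationary functions and the environmental operator $\L=-D\cdot\a D$ on $L^2(\Omega)$, the stationary extension of $e^{-t\L}f$ solves the parabolic equation \eqref{e.space.parab}. Hence $u(t,x)=(e^{-t\L}f)(\theta_x\a)$, so for $e=\{x,x+\e_j\}$,
\[
\nabla u(t,e)=\bigl(D_j\,e^{-t\L}f\bigr)(\theta_x\a),
\]
and the invariance of $\P$ under $\theta_x$ yields $\E[|\nabla u(t,e)|^2]=\|D_j e^{-t\L}f\|_{L^2(\Omega)}^2\le \|D e^{-t\L}f\|_{L^2(\Omega)}^2$.

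Next I would prove the analytic-semigroup bound
\[
\|D e^{-s\L}g\|_{L^2(\Omega)}\le C\,s^{-1/2}\,\|g\|_{L^2(\Omega)}\qquad(s>0).
\]
Uniform ellipticity ($\a\ge 1$) gives $\|D h\|_{L^2}^2\le \E[\a D h\cdot D h]=\la h,\L h\ra$; applying this to $h=e^{-s\L}g$ and using the semigroup property on the right-hand side produces
\[
\|D e^{-s\L}g\|_{L^2}^2\le \la e^{-s\L}g,\L e^{-s\L}g\ra=\la g,\L e^{-2s\L}g\ra.
\]
Since $\L$ is self-adjoint and nonnegative and $\sup_{\lambda\ge 0}\lambda e^{-2s\lambda}=(2se)^{-1}$, spectral calculus bounds the right-hand side by $(2se)^{-1}\|g\|_{L^2}^2$.

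Finally, I would split $e^{-t\L}=e^{-(t/2)\L}\,e^{-(t/2)\L}$, apply the semigroup estimate above to the outer factor, and apply Proposition~\ref{p.moment.bound} with $p=2$ to the inner factor. For $t\ge 1$ this gives
\[
\|D e^{-t\L}f\|_{L^2}\le C\,t^{-1/2}\|e^{-(t/2)\L}f\|_{L^2}\le C\,t^{-1/2}(1+t)^{-(d/4+1/2)},
\]
and squaring produces the claimed exponent $-(d/2+2)$. For $t\in[0,1]$ the bound is immediate from the trivial inequality $\|D_j e^{-t\L}f\|_{L^2}\le 2\|e^{-t\L}f\|_{L^2}\le C$ supplied again by Proposition~\ref{p.moment.bound}. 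I do not foresee any substantive obstacle; the only step that deserves a little care is the first one, namely the identification that converts the spatial gradient $\nabla u$ into the environmental gradient $D\,e^{-t\L}f$ and thereby allows us to work in $L^2(\Omega)$ rather than pointwise in $x$.
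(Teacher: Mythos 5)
Your proof is correct and has the same skeleton as the paper's: reduce to a semigroup smoothing estimate combined with the $L^2$ decay of Proposition~\ref{p.moment.bound} ($p=2$), treating $t\le 1$ and $t\ge 1$ separately. The one genuine difference is in how the smoothing bound $\|D e^{-s\L}g\|_{L^2}\lesssim s^{-1/2}\|g\|_{L^2}$ is obtained: you invoke spectral calculus for the self-adjoint nonnegative generator $\L$ via $\sup_{\lambda\ge 0}\lambda e^{-2s\lambda}=(2se)^{-1}$, whereas the paper works on the stationary extension directly, first observing that $t\mapsto\E[\nabla u\cdot\a\nabla u(t,x)]$ is nonincreasing (since its derivative is $-2\E[(\nabla\cdot\a\nabla u)^2]\le 0$), then averaging over $s\in[t/2,t]$ and using the dissipation identity $\partial_t\E[u^2]=-2\E[\nabla u\cdot\a\nabla u]$ to evaluate the average as $t^{-1}\bigl(\E[u^2(t/2)]-\E[u^2(t)]\bigr)$. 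Both routes are standard and of comparable effort; the spectral-calculus version is slightly more compact for a self-adjoint generator, while the paper's monotonicity-and-averaging argument stays entirely at the level of the energy identity and does not presuppose the spectral theorem.
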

\begin{proof}
We first observe that the mapping $t \mapsto \E[(\nabla u \cdot \a \nabla u)(t,x)]$ is nonincreasing. Indeed, by stationarity, and dropping the variables $(t,x)$ from the notation,
\begin{align*}  %\label{e.}
\partial_t \E[\nabla u \cdot \a \nabla u] & = 2 \E \Ll[ \nabla  \Ll( \nabla \cdot \a \nabla u \Rr) \cdot \a \nabla u \Rr] \\
& = - 2 \E \Ll[ \Ll( \nabla \cdot \a \nabla u \Rr) ^2 \Rr] \le 0.
\end{align*}
Moreover,
\begin{equation*}  %\label{e.}
\partial_t \E[u^2] = -2  \E \Ll[ \nabla u \cdot \a \nabla u \Rr].
\end{equation*}
Using also Proposition~\ref{p.moment.bound} with $p = 2$, we obtain
\begin{align*}  %\label{e.}
\E[(\nabla u \cdot \a \nabla u)(t,x)] & \le \frac 2 t \int_{\frac t 2}^t \E[(\nabla u \cdot \a \nabla u)(s,x)] \, \d s \\
& = t^{-1} \Ll( \E \Ll[ u^2\Ll(\tfrac t 2, x\Rr) \Rr] - \E\Ll[u^2(t,x)\Rr] \Rr) \\
& \le C (1+t)^{-\Ll( \frac d 2 + 2 \Rr) }.
\end{align*}
The announced result then follows by the assumption of uniform ellipticity of the coefficient field. 
\end{proof}

\begin{proof}[Proof of Proposition~\ref{p.spat.mix}]
For notational convenience, we focus on showing that 
\begin{equation}  
\label{e.spat.mix2}
\var \Ll[ \frac 1 {|B_r|} \sum_{x \in B_r} v_k^2(x) \Rr] \le C 2^{-2k \Ll( \frac d 2 + 1 \Rr) } \Ll( 1 + 2^{-\frac k 2} r \Rr) ^{-d}.
\end{equation}
The proof that
\begin{equation*}  %\label{e.}
\var \Ll[ \frac 1 {|B_r|} \sum_{x \in B_r} v_{k-1}(x) v_k(x) \Rr] \le C 2^{-2k \Ll( \frac d 2 + 1 \Rr) } \Ll( 1 + 2^{-\frac k 2} r \Rr) ^{-d}
\end{equation*}
is essentially identical, and we therefore omit it. We decompose the proof into four steps. 

\smallskip

\emph{Step 1.}
We show that there exists a constant $C < \infty$ such that for every $k \in \N$ and $x \in \Zd$,
\begin{equation}  
\label{e.fourth.moment}
\E \Ll[ v_k^4(x) \Rr] \le C 2^{-k \Ll( d + 2 \Rr) }.
\end{equation}
Denote by $S_k$ the random variable in \eqref{e.def.Sn}. We assume that this random variable is independent of the coefficient field $\a$, and denote the expectation with respect to this random variable by $E$. We recall that we have the representation
\begin{equation}  
\label{e.recall.rep}
v_k(x) = E \Ll[ u(S_k,x) \Rr] .
\end{equation}
By Jensen's inequality and Proposition~\ref{p.moment.bound} (with $p = 4$), we have
\begin{equation*}  %\label{e.}
\E[v_k^4(x)] \le E \E  \Ll[ u^4(S_k,x) \Rr] \le E \Ll[ (1+S_k)^{-(d+2)} \Rr] .
\end{equation*}
The conclusion \eqref{e.fourth.moment} then follows from \eqref{e.Sn.moment}.

\smallskip

\emph{Step 2.}
By Jensen's inequality and \eqref{e.fourth.moment}, we obtain the inequality \eqref{e.spat.mix2} provided that $r \le 2^{\frac k 2}$. From now on, we therefore assume that $r \ge 2^{\frac k 2}$. We start by recalling the Efron-Stein (or spectral gap) inequality. Let $(\a'_e)_{e \in \B}$ be an independent copy of the environment $(\a_e)_{e \in \B}$, defined on the same probability space. For each edge $e \in \B$, we let $\a^e$ be the environment defined by
\begin{equation}
\label{e.def.ae}
\a^e_b := 
\Ll|
\begin{array}{ll}
\a_b & \text{if } b \neq e, \\
\a'_e & \text{if } b = e.
\end{array}
\Rr.
\end{equation}
In other words, the environment $\a^e$ is obtained from the environment $\a$ by resampling the conductance at the edge $e$. By the independence assumption on the conductances~$(\a_e)$, every random variable $X$ satisfies the Efron-Stein inequality
\begin{align}\label{SG}
\var[X] \le \frac 1 2 \sum_{e \in \B} \E \Ll[  \Ll(\partial_e X \Rr)^2 \Rr],
\end{align}
where $\partial_e X$ denotes the Glauber derivative:
$$
\partial_eX(a):= X(\a^e)-X(\a).
$$
Applying this inequality to our problem yields that
\begin{equation*}  %\label{e.}
\var \Ll[ \sum_{x \in B_r} v_k^2(x) \Rr] \le \frac 1 2 \sum_{e \in \B} \E \Ll[ \Ll( \sum_{x \in B_r} \partial_{e} \Ll( v_k^2(x)\Rr) \Rr) ^2 \Rr] .
\end{equation*}
For each edge $e$, we write
\begin{align*}  
%\label{e.def.u+}
v_{k}^{(e)}(x) & := |v_k(\a^e,x)| + |v_k(\a,x)|.
%u^{(e)}(t,x) & := |u(\a^e,t,x)| + |u(\a,t,x)|.
\end{align*}
where we stressed the dependency on the environment $\a$ or $\a^e$ in the notation. Note that the environment $\a^e$ has the same law as $\a$. Moreover,
\begin{equation*}  %\label{e.}
\Ll|\partial_e \Ll( v_k^2(x) \Rr) \Rr|\le v_{k}^{(e)}(x) \, \Ll|\partial_e v_k(x)\Rr|.
\end{equation*}
We fix an exponent $\al > d$. By H\"older's inequality,
\begin{align*}  %\label{e.}
\sum_{x \in B_r} \partial_{e} \Ll( v_k^2(x)\Rr) & \le \sum_{x \in B_r} v_k^{(e)}(x) \Ll| \partial_e v_k(x) \Rr| \\
& \le \Ll( \sum_{x \in B_r} w_e^{-\al}(2^k,x) |v_k^{(e)}(x)|^2 \Rr)^\frac 1 2 \Ll( \sum_{x \in B_r} w_e^\al(2^k,x) |\partial_e v_k(x)|^2 \Rr)^\frac 1 2.
\end{align*} 
Therefore,
\begin{multline}  
\label{e.full.var}
\var \Ll[ \sum_{x \in B_r} v_k^2(x) \Rr] \\
 \le \sum_{e \in \B}\E \Ll[ \Ll(\sum_{x \in B_r} w_e^{-\al}(2^k,x) |v_k^{(e)}(x)|^2 \Rr)^2\Rr]^\frac 1 2 \E \Ll[ \Ll(\sum_{x \in B_r} w_e^\al(2^k,x) |\partial_e v_k(x)|^2 \Rr)^2\Rr]^\frac 1 2.
\end{multline}
By H\"older's inequality,
\begin{equation*}  %\label{e.}
\Ll(\sum_{x \in B_r} w_e^{-\al}(2^k,x) |v_k^{(e)}(x)|^2 \Rr)^2\le \Ll(\sum_{x \in B_r} w_e^{-\al}(2^k,x)\Rr)  \Ll( \sum_{x \in B_r} w_e^{-\al}(2^k,x) |v_k^{(e)}(x)|^4 \Rr).
\end{equation*}
Hence, by the definition of $v_k^{(e)}$ and \eqref{e.fourth.moment}, 
\begin{equation}  
\label{e.bound2}
\E \Ll[ \Ll(\sum_{x \in B_r} w_e^{-\al}(2^k,x) |v_k^{(e)}(x)|^2 \Rr)^2\Rr]^\frac 1 2\le C 2^{-k} \Ll( 1 + 2^{-\frac k 2} \dist(e,B_r) \Rr) ^{-\al}.
\end{equation}
If we temporarily admit that there exists a constant $C(d,\Lambda,\alpha) < \infty$ such that for every $e \in \B$,
\begin{equation}
\label{e.bound1}
\E \Ll[ \Ll( \sum_{x \in B_r} w_e^\al(2^k,x) |\partial_e v_k(x)|^2 \Rr) ^2 \Rr]^\frac 1 2 \le C 2^{-k \Ll( \frac d 2 + 1 \Rr) },
\end{equation}
then we can combine the two bounds \eqref{e.bound2} and \eqref{e.bound1} into \eqref{e.full.var} and thus complete the proof of the inequality \eqref{e.spat.mix2}. It therefore only remains to show that \eqref{e.bound1} holds. We decompose the proof of this latter statement into two steps. 

\smallskip

\emph{Step 3.} We show that there exists a constant $C < \infty$ such that for every $r,t \ge 0$ and $e \in \B$, 
\begin{equation}
\label{e.bound1.ut}
\E \Ll[ \Ll( \sum_{x \in B_r} w_e^\al(t,x) |\partial_e u(t,x)|^2 \Rr) ^2 \Rr]^\frac 1 2 \le C (1 + t)^{- \Ll( \frac d 2 + 1 \Rr) }.
\end{equation}
We recall from \cite[Lemma~3]{GNO} that there exists $C(d,\Lambda) < \infty$ such that
\begin{equation}  
\label{e.decomp.deu}
|\partial_e u(t,x)| \le C\Ll|\nabla G(t,x,e)\Rr| + \int_0^t \Ll| \nabla G(t-s,x,e) g(s,e) \Rr| \d s ,
\end{equation}
where $g(s,e)$ is a random variable that satisfies, for each $p < \infty$,
\begin{equation}
\label{e.boundg}
\E \Ll[ |g(s,e)|^p \Rr]^\frac 1 p \le C \E \Ll[ |\nabla u(s,e)|^p \Rr]^\frac 1 p .
\end{equation}
By Proposition~\ref{p.gradient.green} with $p = 1$, the first term on the right side of \eqref{e.decomp.deu} is handled without difficulty. 
It therefore suffices to show that
\begin{multline}  
\label{e.bound11}
\E \Ll[ \Ll( \sum_{x \in \Z^d} w_e^\al(t,x) \Ll| \int_0^t \Ll| \nabla G(t-s,x,e) g(s,e) \Rr| \, \d s  \Rr|^2  \Rr)^2  \Rr]^\frac 1 2 \\
\le C (1 + t)^{- \Ll( \frac d 2 + 1 \Rr) }.
\end{multline}
We first note for future reference that by H\"older's inequality, \eqref{e.boundg}, Lemma~\ref{l.gradient} and Proposition~\ref{p.moment.bound}, there exists a constant $C(d,\Lambda) < \infty$ such that
\begin{equation}  
\label{e.boundg2}
\E \Ll[ |g(s,e)|^4 \Rr]^\frac 1 4 \le \E[|g(s,e)|^2]^\frac 1 2 \, \E[|g(s,e)|^6]^\frac 1 6 \le C t^{- \Ll( \frac d 4 + 1 \Rr) }.
\end{equation}
We now proceed to estimate the sum on the left side of \eqref{e.bound11}.
By the integral triangle inequality,
\begin{multline*}  %\label{e.}
\Ll(\sum_{x \in \Z^d} w_e^\al(t,x) \Ll| \int_0^t \Ll| \nabla G(t-s,x,e) g(s,e) \Rr| \, \d s  \Rr|^2\Rr)^\frac 1 2 \\
\le \int_0^t  \Ll( \sum_{x \in \Z^d} w_e^\al(t,x) \Ll| \nabla G(t-s,x,e) g(s,e) \Rr|^2 \Rr)^\frac 1 2 \, \d s .
\end{multline*}
Applying the triangle inequality again, we arrive at
\begin{multline}  \label{e.bound.11.int}
\E \Ll[ \Ll( \sum_{x \in \Z^d} w_e^\al(t,x) \Ll| \int_0^t \Ll| \nabla G(t-s,x,e) g(s,e) \Rr| \, \d s  \Rr|^2  \Rr)^2  \Rr]^\frac 1 {4} \\
\le \int_0^t \E \Ll[ \Ll(  \sum_{x \in \Z^d} w_e^\al(t,x) \Ll| \nabla G(t-s,x,e) g(s,e) \Rr|^2\Rr)^{2}  \Rr]^{\frac 1 {4}} \, \d s.
\end{multline}
We fix $p = p_0(d,\Lambda) > 1$ as given by Proposition~\ref{p.gradient.green}, and denote by $q =\frac{p}{p-1}$ its conjugate exponent. We also give ourselves an exponent $\be > 0$ to be specified later. By H\"older's inequality,
\begin{align*}  %\label{e.}
& \sum_{x \in \Z^d} w_e^\al(t,x) \Ll| \nabla G(t-s,x,e) g(s,e) \Rr|^2 
\\&  \quad \le \Ll(\sum_{x \in \Z^d} w_e^\al(t,x) w_e^{\be p}(t-s,x) \Ll| \nabla G(t-s,x,e) \Rr|^{2p}  \Rr)^\frac 1 {p}
 \\ & \qquad \times  \Ll(\sum_{x \in \Z^d} w_e^\al(t,x) w_e^{-\be q}(t-s,x) \Ll| g(s,e) \Rr|^{2q}  \Rr)^\frac 1 {q}
\end{align*}
By Proposition~\ref{p.gradient.green}, the first term is deterministically bounded by a constant times
\begin{equation*}  %\label{e.}
(1 + |t-s|)^{- \Ll( d + 1 \Rr)  + \frac d 2 \frac {1}{p}}.
\end{equation*}
Moreover, using the triangle inequality once more, we get
\begin{align*}  %\label{e.}
& \E \Ll[ \Ll(\sum_{x \in \Z^d} w_e^\al(t,x) w_e^{-\be q}(t-s,x) \Ll| g(s,e) \Rr|^{2q}  \Rr)^\frac 2 {q} \Rr]^\frac {q} 2 \\
& \quad \le \sum_{x \in \Z^d} \E \Ll[ \Ll( w_e^\al(t,x) w_e^{-\be q}(t-s,x) \Ll| g(s,e) \Rr|^{2q}  \Rr)^\frac 2 {q}  \Rr]^\frac{q} 2 \\
& \quad \le C \Ll(1 + |t-s|\Rr)^{\frac d 2} \, \Ll(  1 + s \Rr)^{-2q \Ll( \frac d 4 + 1 \Rr)} ,
\end{align*}
where we chose $\be < \infty$ sufficiently large and used \eqref{e.boundg2} and Proposition~\ref{p.moment.bound} in the last step. 
The quantity on the left of \eqref{e.bound.11.int} is therefore bounded by a constant times
\begin{align*}  %\label{e.}
& \int_0^t (1 + |t-s|)^{- \Ll( \frac d 2  + \frac 1 2 \Rr)  + \frac d 2 \frac {1}{2p}} \, (1 + |t-s|)^{\frac d 2 \frac 1 {2q}} \, \Ll( 1 + s \Rr)^{-\Ll( \frac d 4 + 1 \Rr)}  \, \d s \\
& \qquad = \int_0^t (1 + |t-s|)^{- \Ll( \frac d 4 + \frac 1 2 \Rr) } \, (1 + s)^{- \Ll( \frac d 4 + 1 \Rr)} \, \d s \\
& \qquad \le C (1 + t)^{-\Ll(\frac d 4 + \frac 1 2\Rr)}.
\end{align*}
This completes the proof of the inequality \eqref{e.bound11}.

\smallskip

\emph{Step 4.} In order to complete the proof, there remains to show that \eqref{e.bound1.ut} implies \eqref{e.bound1}. We use the representation~\eqref{e.recall.rep} and Jensen's and the triangle inequalities to bound
\begin{align*}  %\label{e.}
& \E \Ll[ \Ll( \sum_{x \in B_r} w_e^\al(2^k,x) |\partial_e v_k(x)|^2 \Rr) ^2 \Rr]^\frac 1 2 
\\
& \qquad  \le \E \Ll[ \Ll(E\Ll[\sum_{x \in B_r} w_e^\al(2^k,x) |\partial_e u(S_k,x)|^2 \Rr] \Rr)^2 \Rr]^\frac 1 2 \\
& \qquad \le E \Ll[ \E \Ll[ \Ll(\sum_{x \in B_r} w_e^\al(2^k,x) |\partial_e u(S_k,x)|^2 \Rr)^2 \Rr]^\frac 1 2  \Rr] .
\end{align*}
By the result of the previous step and \eqref{e.Sn.moment}, this last term is bounded by a constant times 
\begin{equation*}  %\label{e.}
 E \Ll[ (1+S_k)^{-\Ll( \frac d 2 + 1 \Rr) } \Rr] \le C 2^{-k \Ll( \frac d 2 + 1 \Rr) }. 
\end{equation*}
This completes the proof.
\end{proof}
\begin{proof}[Proof of Theorem~\ref{t.eff}]
We decompose the left side of \eqref{e.eff} into bias and variance. Using Theorem~\ref{t.pgk} and Proposition~\ref{p.rest}, we can bound the bias by
\begin{align*}  %\label{e.}
 \Ll| \la f, \L^{-1} f  \ra - \sum_{k = 0}^{n}  \frac{1}{|B_{r_k}|} \sum_{x \in B_{r_k}} 2^k \E \Ll[ v_{k-1}(x) v_k(x) + v_k^2(x) \Rr]  \Rr| & = \Ll| \la f_n, \L^{-1} f_n \ra \Rr| \\
 & \le C 2^{-\frac {dn}{2}}.
\end{align*}
By Proposition~\ref{p.spat.mix}, we have
\begin{align*}  %\label{e.}
\var \Ll[ \frac{1}{|B_{r_k}|} \sum_{x \in B_{r_k}} 2^k \Ll( v_{k-1}(x) v_k(x) + v_k^2(x) \Rr) \Rr] 
& \le C 2^{-\frac{dk}{2}} r_k^{-d} \\
& \le C 2^{-dn-\eps d k}.
\end{align*}
By the triangle inequality, we deduce that
\begin{align*}  %\label{e.}
\var \Ll[ \sum_{k = 0}^n \frac{1}{|B_{r_k}|} \sum_{x \in B_{r_k}} 2^k \Ll( v_{k-1}(x) v_k(x) + v_k^2(x) \Rr)\Rr] 
& \le C \Ll( \sum_{k = 0}^{n} 2^{-\frac 1 2 \Ll(dn + \eps d k\Rr)}  \Rr)^2 \\
& \le C 2^{-dn}.
\end{align*}
This completes the proof.
\end{proof}

\section{Complexity analysis}
\label{s.compl}

In this section, we first complete the proof of Theorem~\ref{t.main} by evaluating the computational complexity of the method described there. We then prove the complexity lower bound announced in Proposition~\ref{p.lower.bound}. 

\smallskip

We recall that the method described in Theorem~\ref{t.main} is a family of approximations of the quantity of interest given, for each $n \in \N$, by
\begin{equation}  
\label{e.this.is.the.method}
\sum_{k = 0}^n \frac{1}{|B_{r_k}|} \sum_{x \in B_{r_k}} 2^k \Ll( v_{k-1}(x) v_k(x) + v_k^2(x) \Rr),
\end{equation}
where $v_k$ are functions on $\Z^d$ defined by \eqref{e.def.vk}, and $r_k$ is defined in \eqref{e.def.main.rk} and depends implicitly on $n$ and on an exponent $\eps \in [0,\frac 1 2)$. (In view of Remark~\ref{e.sqrt.log}, we will include the case $\eps = 0$ in the complexity analysis.) We investigate the complexity of this method under the additional assumption
\begin{equation}  
\label{e.assumption.epsilon}
\eps < \frac{d-1}{2d}. 
\end{equation}
The question concerns the cost of computing the functions $(v_k, k \in \{0,\ldots n\})$ on $B_{r_k}$. These functions are solutions of elliptic equations. As we now argue, direct conjugate gradient methods, without any preconditioning, already yield the optimal scaling for the complexity of the method. Indeed, by standard Green function upper bounds (see e.g.\ \cite[Proposition~3.6]{dl-diff}), if we solve the equation for $v_k$ with Dirichlet boundary condition outside of $B_{r_k + C n 2^{\frac k 2}}$ for a sufficiently large constant $C < \infty$ (instead of the full space), then we can ensure that the solution thus obtained is $2^{-100dn}$ away from the true solution $v_k$. The problem of computing \eqref{e.this.is.the.method} therefore boils down to the resolution of a series of elliptic problems, indexed by $k \in \{0,\ldots, n\}$, where for each $k$, we need to invert the operator $(2^{-k} + \nabla \cdot \a \nabla)$ on a domain of side length $r_k + C n 2^{\frac k 2}$. The condition number for this problem is of the order of $2^k$. Recall that using the conjugate gradient method, the computational cost of computing the solution to an elliptic problem with $N$ unknowns and condition number $\kappa$ at precision $\delta$ is of the order of $N \sqrt{\kappa} \log(\delta^{-1})$ (see e.g.\ \cite[Theorem~4.12]{quart}). Hence, for the problem of computing \eqref{e.this.is.the.method}, the total complexity using conjugate gradient methods without preconditioning, and with a precision fixed at, say, $2^{-100dn}$, is bounded by a constant times
\begin{align*}  %\label{e.}
n\sum_{k = 0}^n \Ll(r_k + C n 2^{\frac k 2}\Rr)^d 2^{\frac k 2} & = n\sum_{k = 0}^n \Ll(2^{n-\frac k 2 + \eps k} + C n 2^{\frac k 2}\Rr)^d 2^{\frac k 2} \\
& \le C n\sum_{k = 0}^n 2^{nd - \Ll( \frac {d-1} 2 - \eps d \Rr) k} \\
& \le C n2^{nd},
\end{align*}
where we used the assumption of \eqref{e.assumption.epsilon} in the last step. This completes the proof of Theorem~\ref{t.main}.

\smallskip

One may argue that, under the specific i.i.d.\ assumption we make on the coefficient field, periodization would allow to simply get rid of any boundary layer. However, the idea of periodizing the medium is in general not suitable for solving the problem as stated at the onset of the introduction to this paper. Indeed, as soon as some short range of correlation is present in the medium, a periodization of the medium will create boundary defects that cause significant errors. This is discussed in more details in \cite{cemracs}, see in particular Figure~9 there.

\smallskip

In the method described in Theorem~\ref{t.main}, the computational overhead caused by the necessity of boundary layers is an asymptotically small fraction of the total computational cost. More precisely, if we compare, say for $\eps = 0$, the actual cost of the method with the fictitious cost one would face if boundary layers were not computed (i.e.\ where only values that enter into the formula \eqref{e.this.is.the.method} are computed), we find that the difference is bounded by a constant times
\begin{equation*}  %\label{e.}
n^2 \sum_{k = 0}^n 2^{\frac k 2 + (d-1) \Ll( n - \frac k 2 \Rr)  + \frac k 2} \le C n^2 2^{(d-1)n} \times
\Ll|
\begin{array}{ll}  %\label{}
2^{\frac n 2} & \quad \text{if } d = 2, \\
n & \quad \text{if } d = 3, \\
1  &\quad \text{if } d \ge 4.
\end{array}
\Rr.
\end{equation*}
Hence, up to logarithmic corrections, the computational overhead of boundary layers is only of the order of $2^{(d-1)n}$ in dimension $d \ge 3$. As we will argue shortly, no method can compute the homogenized coefficients at precision $2^{-\frac{nd}{2}}$ without evaluating the coefficients in a region of space of volume $2^{nd}$, and in this case a boundary layer of unit size already gives a computational overhead of $2^{(d-1)n}$, so this estimate is optimal.

\smallskip

We now turn to proving the complexity lower bound announced in Proposition~\ref{p.lower.bound}. Since this proposition was stated relatively loosely, we first make it more precise. The number of operations of a given algorithm is a random variable; at precision $\de > 0$, we call this random variable $\mcl N_\de$. The assumption we wish to contradict is that $\de^2 \mcl N_\de$ tends to $0$ in probability, as $\de$ tends to $0$. We will contradict it for a specific choice of the law of the random environment. However, it will be clear that the proof is very generic, and that the specific choice we make is only a matter of convenient notation and computation. Indeed, the only key point is to observe that ``generically'', the Kullback-Leibler divergence (a.k.a.\ the relative entropy) responds quadratically to perturbations.

\begin{proof}[Proof of Proposition~\ref{p.lower.bound}]
For each $p \in [0,1]$, let $\P_p$ denote the product measure
\begin{equation*}  %\label{e.}
\P_p = \bigotimes_{e \in \B}\,  (p \de_1 + (1-p) \de_2),
\end{equation*}
where $\de_x$ denotes the Dirac probability measure at $x \in \R$. We denote by $\E_p$ the associated expectation, and take $(\a_e)_{e \in \B}$ to be the canonical random variable on $\R^\B$. Let $\ahom(p)$ be the homogenized matrix associated with the law $\P_p$. By \cite{dl-diff}, the mapping $p \mapsto \ahom(p)$ is $C^1$ on $[0,1]$, and it is not constant since $\ahom(0) \neq \ahom(1)$. We will argue that in order for $\ahom(p)$ to be determined within an error of $\de$, the algorithm must query the values of at least $\simeq \de^{-2}$ conductances, and therefore perform at least $\simeq \de^{-2}$ operations. Since the law is of product form, we may assume without loss of generality that the algorithm queries $\a_{e_1}, \a_{e_2},\ldots$ in order, where $e_1,e_2,\ldots$ is an arbitrary sequence of distinct elements of $\B$. 

For every $p,q \in (0,1)$, every $n \in \N$ and every event $A \in \sigma \Ll( \a_{e_1},\ldots,\a_{e_n}, e \in \mcl E \Rr) $,
\begin{align}  %\label{e.}
\P_q \Ll[ A \Rr] & = \E_p \Ll[ \1_A \prod_{i = 1}^n \Ll(\frac q p \1_{\a_{e_i} = 1} + \frac{1-q}{1-p} \1_{\a_{e_i} = 2} \Rr)\Rr] \notag\\
& = \E_p \Ll[ \1_A \exp \Ll( \hat{\mathrm{kl}}(q,p,n) \Rr)  \Rr] ,
\label{e.hatkl}
\end{align}
where we introduced the ``empirical'' Kullback-Leibler divergence
\begin{equation*}  %\label{e.}
\hat{\mathrm{kl}}(q,p,n) := \sum_{i = 1}^n \log\Ll(\frac q p \1_{\a_{e_i} = 1} + \frac{1-q}{1-p} \1_{\a_{e_i} = 2} \Rr).
\end{equation*}
The ``true'' Kullback-Leibler divergence between Bernoulli random variables with parameters $q$ and $p$ is the expectation of each of these summands, that is,
\begin{equation*}  %\label{e.}
\mathrm{kl}(q,p) := p \log \frac q p + (1-p) \log \frac{1-q}{1-p} \ge 0.
\end{equation*}
Differentiating with respect to $q$, we verify that for each $\eta \in (0,1/2)$, there exists $C(\eta) < \infty$ such that 
\begin{equation*}  %\label{e.}
\forall p,q \in (\eta,1-\eta), \quad \mathrm{kl}(q,p) \le C(q-p)^2.
\end{equation*}
We also verify that 
\begin{equation*}  %\label{e.}
\forall p,q \in (\eta,1-\eta), \quad \log\Ll(\frac q p \1_{\a_{e_i} = 1} + \frac{1-q}{1-p} \1_{\a_{e_i} = 2} \Rr) \le C|q-p|.
\end{equation*}
In particular, uniformly over $p,q \in (\eta,1-\eta)$ and $n \ge 1$,
\begin{equation}  
\label{e.control.hatkl}
\E_p \Ll[ \Ll(\hat{\mathrm{kl}}(q,p,n) \Rr)^2\Rr] \le C n |q-p|^2 + C n^2 |q-p|^4.
\end{equation}

If an algorithm can compute $\had$ in $o(\de^{-2})$ operations, we can define a deterministic $N_\de = o(\de^{-2})$ such that the algorithm queries only the  edges $e_1,\ldots,e_{N_\de}$ with probability at least $1/2$. Denoting by $\mcl E_\de$ the event that only edges $e_1,\ldots,e_{N_\de}$ are queried, we have
\begin{align*}  %\label{e.}
\E_{q} \Ll[ (\had - \ahom(q))^2 \Rr] & \ge \E_{p} \Ll[ (\had - \ahom(q))^2   \, \1_{\mcl E_\de}  \, \exp \Ll( \hat{\mathrm{kl}}(q,p,N_\de)\Rr) \Rr].
\end{align*}
We choose $q_\de$ in such a way that
\begin{equation}
\label{e.def.qde}
\de \ll q_\de-p \ll N_\de^{-1/2},
\end{equation}
in the sense that the ratio of the right to the left side of both inequalities diverge to infinity as $\de \to 0$. Let $\mcl E'_\de$ be the conjunction of $\mcl E_\de$ with the event that $\hat{\mathrm{kl}}(q_\de,p,N_\de) \ge -1$. 
Combining the three previous displays, we obtain, for $\de > 0$ sufficiently small,
\begin{equation*}  %\label{e.}
\E_{q_\de} \Ll[ |\had - \ahom(q_\de)|^2 \Rr] \ge \E_p\Ll[ |\had - \ahom(q_\de)|^2   \, \1_{\mcl E'_\de}   \Rr] \, \exp(-1) , \qquad \P_p \Ll[ \mcl E'_\de \Rr] \ge 1/4.
\end{equation*}
Since $\E_p \Ll[ \Ll| \had - \ahom(p) \Rr| ^2 \Rr] \le \de^2$, we deduce that
\begin{align*}  %\label{e.}
|\ahom(p) - \ahom(q_\de)| & \le 4 \E_p[|\ahom(p) - \ahom(q_\de)| \1_{\mcl E'_\de}] \\
& \le 4 \E_p \Ll[ |\had - \ahom(q_\de)|^2 \1_{\mcl E'_\de}\Rr]^{1/2} + 4 \E_p \Ll[ |\had - \ahom(p)|^2 \1_{\mcl E'_\de}\Rr]^{1/2} \\
& \le 8 \E_{q_\de} \Ll[ |\had - \ahom(q_\de)|^2 \Rr]^{1/2}  + 4\de \\
& \le 12 \de.
\end{align*}
Since our choice of $p \in (0,1)$ was arbitrary, and $p \mapsto \ahom(p)$ is $C^1$ and non-constant over the interval $[0,1]$, this contradicts \eqref{e.def.qde}.
\end{proof}

\section{Numerical tests}
\label{s.numer}

In this section, we report on numerical results for the method presented in Theorem~\ref{t.main}. For comparison, we also discuss the performance of the previously used method. The code was written in the Julia language, and can be downloaded as part of the source files of the arXiv posting of this paper. 

\smallskip

We focus on a two-dimensional example, with conductances taking the values $c_- = 1$ and $c_+ = 9$ with probability $1/2$ each. This is a rare example for which the homogenized matrix is know analytically: it is $\ahom = \sqrt{c_- c_+} \, \mathrm{Id} = 3 \, \mathrm{Id}$ (see e.g.\ \cite[Appendix~A]{Gloria} or \cite[Exercise~2.3]{AKMBook}). In this case, we have $\E[\a] = 5 \, \mathrm{Id}$, and we thus expect that $\hat{\sigma}_n^2$ converges to $2$. We fix $\xi = (1,0)$.

\smallskip

The best numerical method should probably optimize the parameter $r_k$ in a dynamic way rather than rely on an a length scale obtained a priori from theoretical arguments. Here, I strived instead to avoid any fine-tuning and simply chose $\eps = 0$ in the method described in Theorem~\ref{t.eff}. Whenever the operator $(\mu - \nabla \cdot \a \nabla)$ needs to be inverted, I chose the size of the boundary layer to be
\begin{equation}  
\label{e.boundary.size}
5 \Ll(1 \vee \mu^{-\frac 1 2}\Rr) \Ll(1 \vee \frac{\log \Ll(\mu^{-\frac 1 2}\Rr)}{\log(2)}\Rr) ,
\end{equation}
where we recall that $x \vee y := \max(x,y)$. 

\smallskip

The base-2 logarithm of the left side of \eqref{e.main} (with $\eps = 0$), as a function of $n$, is represented on Figure~\ref{f.error.itresolv}. The slope of the regression line there is $-1.0$, in full agreement with the theoretical prediction of Theorem~\ref{t.main}. Moreover, the intercept of the regression line is $1.0$, which indicates that the prefactor constant in \eqref{e.main} can be chosen as $C \simeq 2.0$. 

\smallskip

\begin{figure}
\centering
\includegraphics[scale=0.5,trim=0.5cm 2cm 0cm 1cm, clip=true]{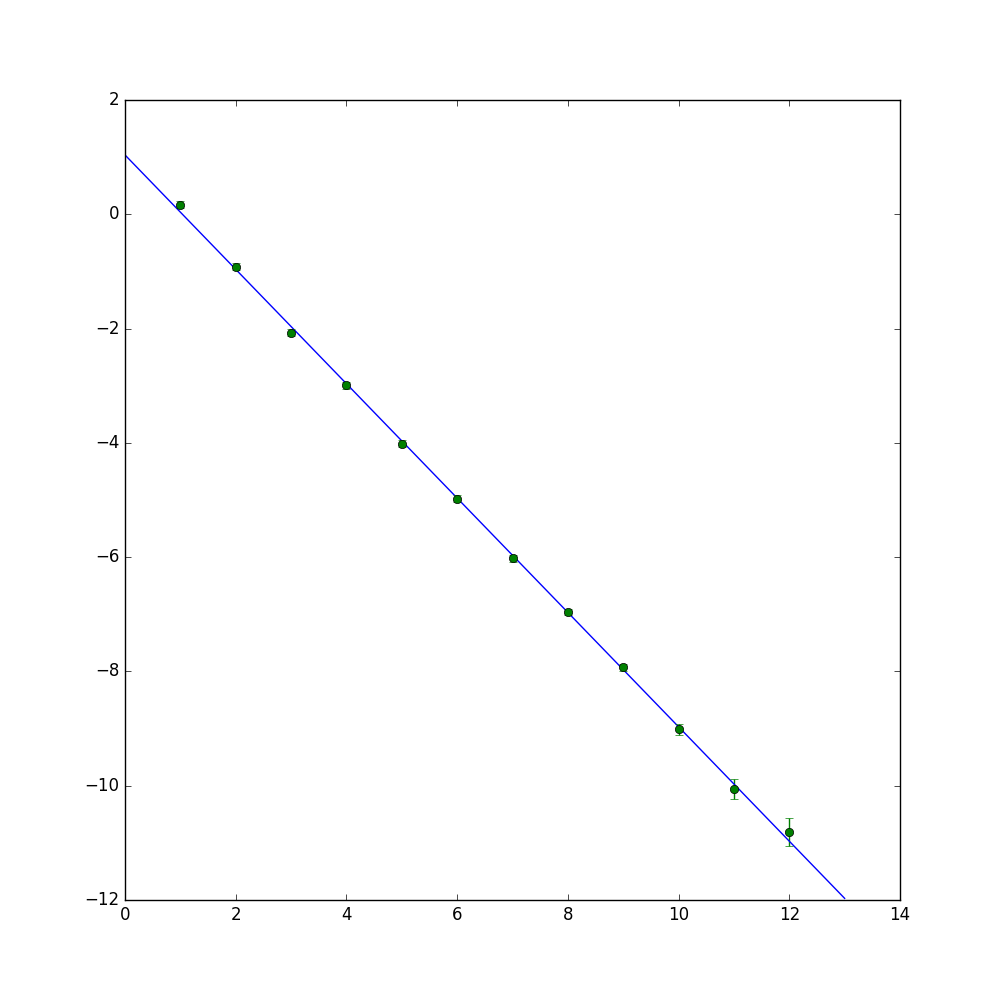}
\caption{
% l2-error: slope is -1.007 and intercept is 1.04
\small{
The base-2 logarithm of the left side of \eqref{e.main} (with $\eps = 0$) as a function of $n$. The error bars are $95\%$ confidence intervals. The slope of the regression line is $-1.0$ and the intercept is $1.0$.
}
}
\label{f.error.itresolv}
\end{figure}

The code was run on a laptop computer with $16$ Go of memory and a processor clocking at $2.40$ GHz. The base-2 logarithm of the time it takes to compute $\hat{\sigma}^2_n$ on this machine, as a function of $n$, is reported on Figure~\ref{f.cpu.itresolv}. 
The slope of the regression line there is $1.9$, very closely agreeing with the prediction of Theorem~\ref{t.main}. The intercept is $-15.2$. It takes less than $3$ minutes to compute~$\hat{\sigma}^2_{12}$.

\begin{figure}
\centering
\includegraphics[scale=0.5,trim=0.5cm 2cm 0cm 1cm, clip=true]{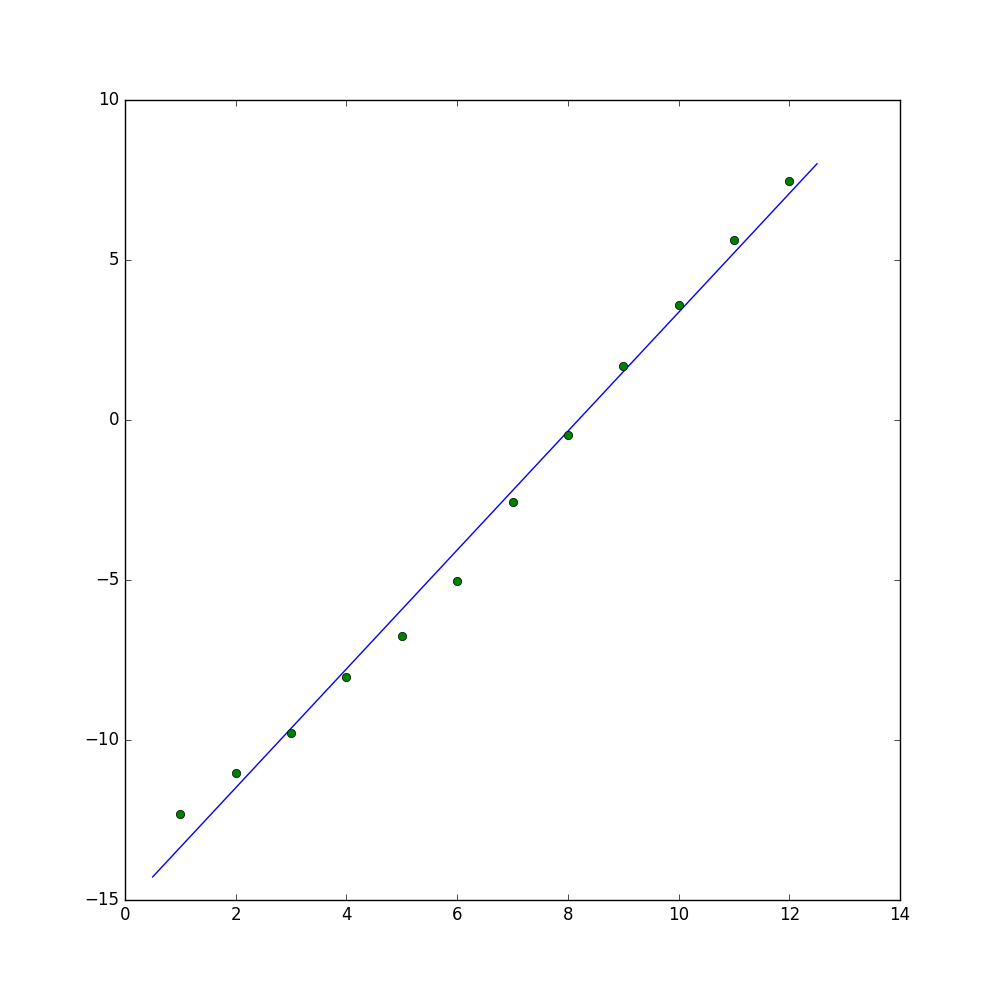}
\caption{
% CPU-time: slope is 1.857 and intercept is -15.21
\small{
The base-2 logarithm of the CPU time used to compute $\hat{\sigma}^2_n$, in seconds, as a function of $n$. The slope of the regression line is $1.9$ and the intercept is $-15.2$. 
}
}
\label{f.cpu.itresolv}
\end{figure}
% times = [ 0.000196164 ,    0.000475216 ,   0.00112983 ,    0.00381722 ,   0.00924334 ,    0.0307751  ,    0.169386   ,    0.723075   ,    3.19749    ,   12.0596     ,   48.9146     ,  177.735    ]

% for itresolv:

\smallskip

We compare these results with the following more classical approach. Recall the definition of the approximate corrector $\phi_\mu$ in \eqref{e.def.phimu}. For each $n \in \N$, we denote $\mu_n := 2^{-n}$ and compute $2^n$ independent copies $\phi_{\mu_n}^{(1)}, \ldots, \phi_{\mu_n}^{(2^n)}$ of $\phi_{\mu_n}$ on a (two-dimensional) box of side length $2^\frac n 2$. (This involves the management of boundary layers of side length $C n 2^\frac n 2$.) The algorithm then outputs
\begin{equation*}  %\label{e.}
\tilde \sigma_n^2 := \frac 1 {k_n} \sum_{i = 1}^{k_n} \frac 1 {|B_{2^{\frac n 2}}|} \sum_{x \in B_{2^{\frac n 2}}} (\xi + \nabla \phi_{\mu_n}^{(i)}) \cdot \a (\xi + \nabla \phi_{\mu_n}^{(i)})) (x).
\end{equation*}
It was shown in \cite{GO1, GO2} that there exists a constant $C < \infty$ such that, for every $n \in \N$,
\begin{equation}  
\label{e.l2.classical}
\E \Ll[ \Ll(\xi \cdot \ahom \xi - \tilde \sigma_n^2 \Rr)^2 \Rr] ^\frac 1 2 \le C 2^{-n}.
\end{equation}
(Recall that we assume $d = 2$ here.)  We stick with the choice of \eqref{e.boundary.size} for the size of the boundary layer. 

\smallskip

Figure~\ref{f.error.classical} displays the base-2 logarithm of the left side of \eqref{e.l2.classical}, as a function of $n$. The slope of the regression line is $-0.9$, in very good agreement with the theoretical prediction. Figure~\ref{f.cpu.classical} reports on the cost of computing $\tilde \sigma_n^2$, as a function of $n$. Two implementations were tried. The first uses no preconditioning, and in this case the slope of the regression line in Figure~\ref{f.cpu.classical} is about $2.7$, in close agreement with the theoretical prediction of $\frac 5 2$. I also implemented a version which uses an incomplete Cholesky factorization as a preconditioner. While this does reduce the number of iterations of the conjugate gradient method, this gain was not sufficient to offset the overhead caused by the preconditioning. The slope of the regression line on Figure~\ref{f.cpu.classical} in this case is $2.6$, but the intercept is $-12.1$, while the regression line for the non-preconditioned method has an intercept of $-15.0$. It takes more than $2$ minutes to compute $\tilde \sigma_8^2$ using the non-preconditioned method, while computing the quantity $\hat{\sigma}_8^2$ introduced in the present paper takes less than $1$~second and yields a better approximation.

\smallskip

\begin{figure}
\centering
\includegraphics[scale=0.5,trim=0.5cm 2cm 0cm 1cm, clip=true]{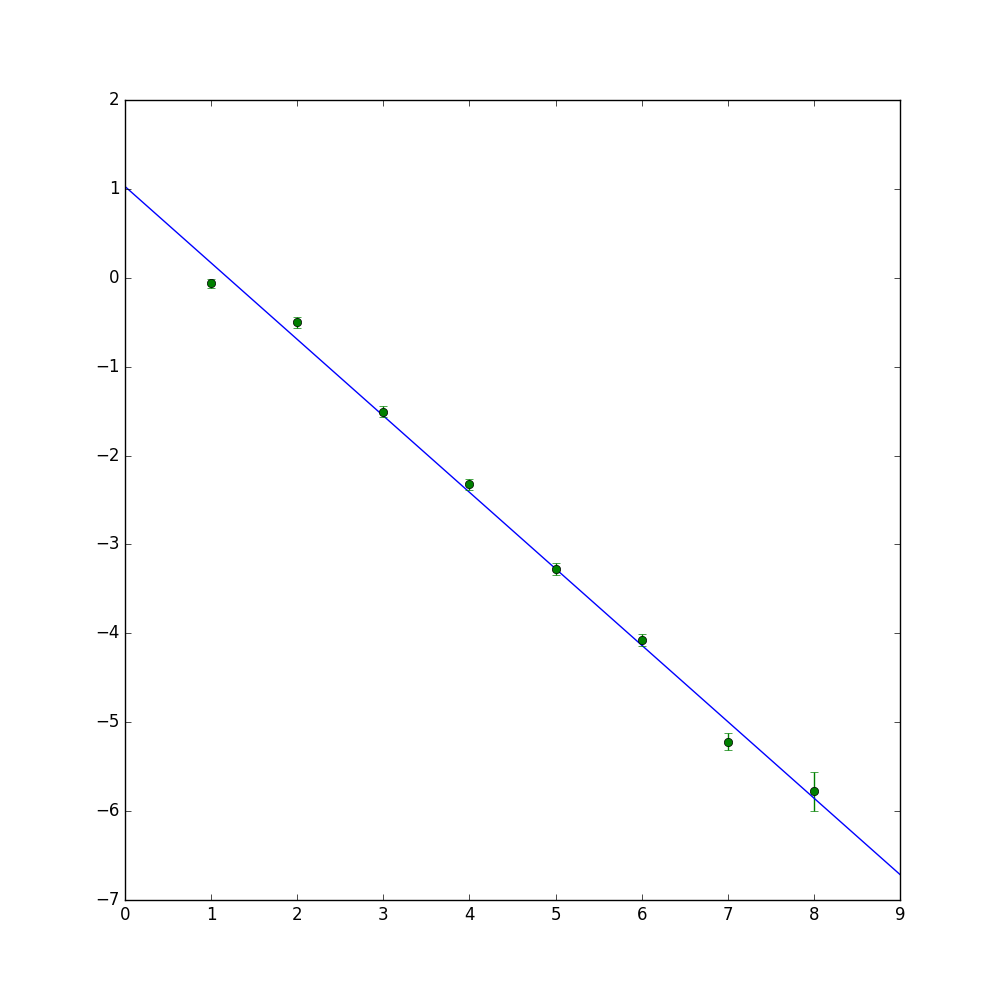}
\caption{
% l2-error: slope is -0.860 and intercept is 1.03
\small{
The base-2 logarithm of the left side of \eqref{e.l2.classical} as a function of $n$. The error bars are $95\%$ confidence intervals. The slope of the regression line is $-0.9$ and the intercept is $1.0$.
}
}
\label{f.error.classical}
\end{figure}

\begin{figure}
\centering
\includegraphics[scale=0.5,trim=0.5cm 2cm 0cm 1cm, clip=true]{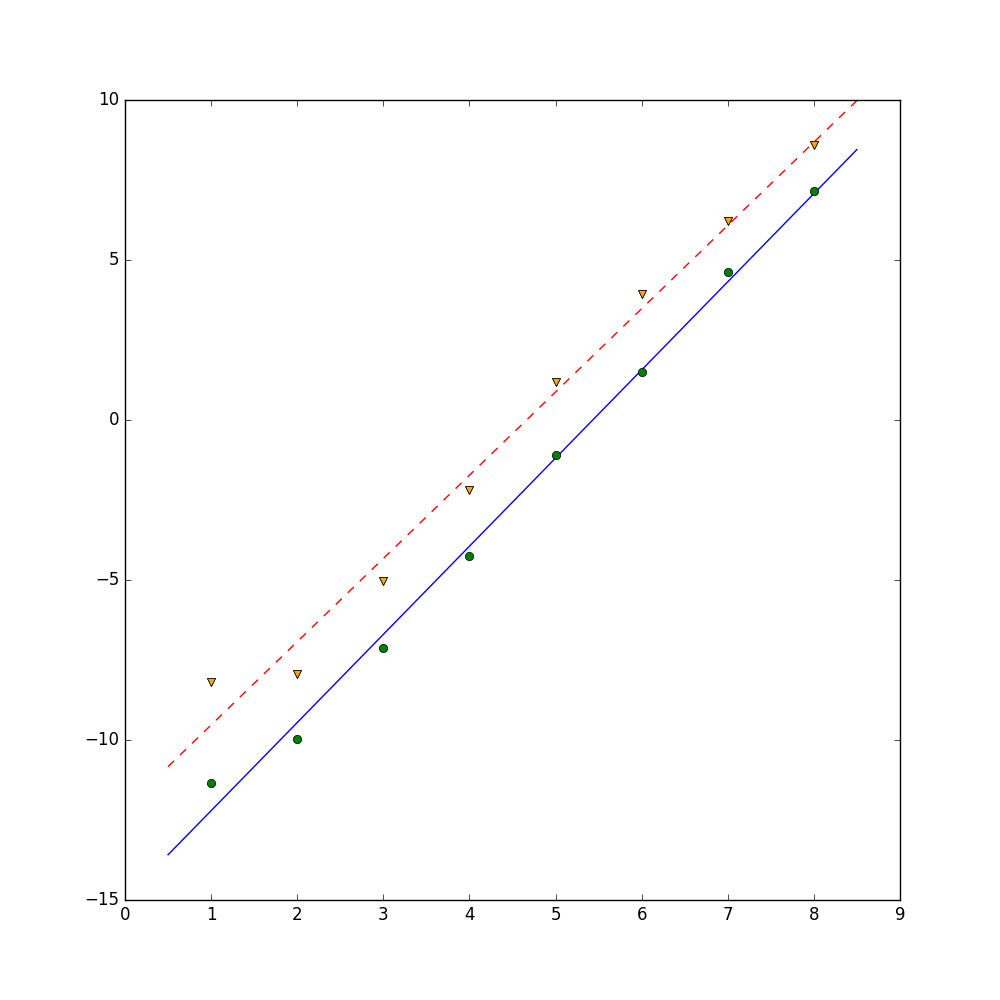}
\caption{
% CPU-time for non-pre: slope is 2.755 and intercept is -14.965
% CPU-time for precond: slope is 2.602 and intercept is -12.132
\small{
The base-2 logarithm of the CPU time used to compute $\hat{\sigma}^2_n$, in seconds, as a function of $n$, for the non-preconditioned (green dots and solid regression line) and the preconditioned (orange triangles and dashed regression line) methods. The slope and intercept of the regression line are $2.7$ and $-15.0$ respectively for the non-preconditioned method, and $2.6$ and $-12.1$ for the preconditioned method.
}
}
\label{f.cpu.classical}
\end{figure}
%times for classical:   0.000381059    0.00100008    0.00720716    0.0523752     0.465399      2.83153      24.5302      141.221   
% slope 2.755, intercept -14.965

% times for classical_pre:       0.00339753    0.00408246   0.0308707    0.221608     2.27531     15.2641      75.2415     383.825    
% slope 2.602, intercept -12.132.

The computational complexity reported here for the older method seems to be consistent with the numerical findings presented in \cite{Gloria}. Indeed, \cite[Figure~14]{Gloria} displays the computational cost of the method as a function of the error, and suggests that the computational time to reach precision $\delta$ scales like $\de^{-2.6}$, in close agreement with the results reported on Figures~\ref{f.error.classical} and~\ref{f.cpu.classical}\footnote{In order to estimate the slope of the regression line of the set of blue dots in \cite[Figure~14]{Gloria}, I first drew the straight line going through the extremal blue dots, and observed that all the other dots are very close to this line. I then measured the coordinates of the extremal dots to be $(3.59, -0.62)$ and $(9.89,-3.06)$ respectively, which yields that the~$dx/dy$ slope of the line is about $-2.58$. Moreover, the seven dots closest to the leftmost blue point are below this line, while the three dots closest to the rightmost blue point are above this line, and thus essentially any other reasonable choice of pair of of points to draw a line and measure the slope from would yield a larger absolute slope.}.

%
%
%
%%%%%%%%%%%%%%%%%%%%%%%%%%%%
%%%%%%%%%%%%%%%%%%%%%%%%%%%%
%
%
%

\section{Discrete-time approach}
\label{s.discrete}

In this final section, we explore an alternative method based on a discrete-time dynamics asociated with the operator $\L$. Contrary to the approach presented in Theorem~\ref{t.main}, the method explored here does not seem to generalize to the continuous-space setting. For discrete-space problems, it provides with an approach that does not involve the resolution of linear equations, and which I conjecture to have a computational cost for a precision of $\de > 0$ of the order of $C \de^{-2}$ (with no logarithmic correction) in dimension $d \ge 3$. This would match exactly the complexity lower bound of Proposition~\ref{p.lower.bound}. In dimension $2$, I expect the number of operations to scale like $\de^{-2} \log(\de^{-1})$ to yield a precision of $\de > 0$. 
%The presentation of this method also gives an opportunity to spell out more precisely how to adapt the ideas of the first part of the paper to the setting of discrete-time Markov chains.

We set
\begin{equation*}  %\label{e.}
\pi(\a) := \sum_{x \sim 0} \a_{0x},
\end{equation*}
and consider the operator 
\begin{equation*}  %\label{e.}
\mathcal L_1 :=  -\pi^{-1} D \cdot \a D.
\end{equation*}
This operator is the generator of a discrete-time Markov chain on $\Omega$, and is self-adjoint on $L^2(\Omega, \pi(\a) \, \d \P(\a))$. We denote by $\la \cdot, \cdot \ra_\pi$ the scalar product in this $L^2$ space and set
\begin{equation}  
\label{e.def.g}
g(\a) := \pi^{-1} D \cdot \a \xi.
\end{equation}
\begin{proposition}
\label{p.discrete.stuff}
We have
\begin{equation}  
\label{e.discrete.stuff}
\xi \cdot \ahom \xi = \E[\xi \cdot \a \xi] - \la g, \L_1^{-1} g \ra_\pi.
\end{equation}
\end{proposition}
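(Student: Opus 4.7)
The plan is to reduce the identity \eqref{e.discrete.stuff} to the known formula \eqref{e.basis} by showing that $\la g, \L_1^{-1} g \ra_\pi$ coincides with $\E[D\phi \cdot \a D\phi]$. The key algebraic observation is that $\L_1 = \pi^{-1} \L$ and $\pi g = f$, where $f = D \cdot \a \xi$; so formally $\L_1^{-1} g = \L^{-1} f$, and $\la g, \L_1^{-1} g\ra_\pi = \E[\pi g \cdot \L_1^{-1} g] = \E[f \L^{-1} f] = \la f, \L^{-1} f\ra$, which by \eqref{e.Dphi.flf} equals $\E[D\phi \cdot \a D\phi]$. The task is to make this chain of identities rigorous via a suitable regularization.

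First I would set up the regularized problem. Since $\pi(\a) \in [2d, 2d\Lambda]$ is bounded and bounded away from zero, the norm on $L^2(\Omega, \pi \, d\P)$ is equivalent to the one on $L^2(\Omega)$, and the operator $\L_1 = \pi^{-1} \L$ is non-negative and self-adjoint on $L^2(\Omega, \pi \, d\P)$ since $\la w, \L_1 w \ra_\pi = \E[w \L w] = \E[D w \cdot \a D w]$. For each $\mu > 0$, set $h_\mu := (\mu + \L_1)^{-1} g \in L^2(\Omega)$ and interpret $\la g, \L_1^{-1} g\ra_\pi$ as $\lim_{\mu \to 0} \la g, h_\mu\ra_\pi$. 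Multiplying the identity $(\mu + \L_1) h_\mu = g$ by $\pi$ and using $\pi g = f$ shows that $h_\mu$ is a variant of the approximate corrector $\phi_\mu$ of \eqref{e.def.phimu}, solving the weighted equation
\begin{equation*}
\mu \pi h_\mu - D \cdot \a D h_\mu = f,
\end{equation*}
and the definition of the weighted inner product gives $\la g, h_\mu \ra_\pi = \E[\pi g \, h_\mu] = \E[f h_\mu]$.

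The next step is to show that $D h_\mu$ converges weakly to $D\phi$ in $\Lpot(\Omega)$ as $\mu \to 0$, by adapting the argument around \eqref{e.conv.phimu}. Testing the equation for $h_\mu$ against $h_\mu$ itself yields the energy identity
\begin{equation*}
\mu \E[\pi h_\mu^2] + \E[D h_\mu \cdot \a D h_\mu] = \E[f h_\mu] = -\E[D h_\mu \cdot \a \xi],
\end{equation*}
giving the uniform bounds $\E[D h_\mu \cdot \a D h_\mu] \le C$ and $\mu \E[h_\mu^2] \le C$. For any stationary test function $w \in L^2(\Omega)$, Cauchy--Schwarz gives $\mu |\E[\pi h_\mu w]| \le \Lambda \|w\|_{L^2} \sqrt{C \mu} \to 0$, so passing to a weak subsequential limit $\tilde \phi$ of $D h_\mu$ in the weak formulation of the equation identifies $\tilde \phi$ as a solution of \eqref{e.weak.corr}. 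Uniqueness forces $\tilde \phi = D\phi$, so the full family $D h_\mu$ converges weakly to $D\phi$.

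Finally, passing to the limit in the identity $\E[f h_\mu] = -\E[D h_\mu \cdot \a \xi]$ (which only uses weak convergence of $D h_\mu$ against the fixed direction $\a \xi$) yields
\begin{equation*}
\la g, \L_1^{-1} g\ra_\pi = -\E[D\phi \cdot \a \xi] = \E[D\phi \cdot \a D\phi],
\end{equation*}
where the last equality comes from testing \eqref{e.weak.corr} against $D\phi$ itself. Combining with \eqref{e.basis} gives \eqref{e.discrete.stuff}. The only genuine subtlety is handling the $\pi$-weight in the mass term of the equation for $h_\mu$, which differs from \eqref{e.def.phimu}; this is harmless because uniform ellipticity keeps $\pi$ bounded above and below, so all the energy estimates transfer without change.
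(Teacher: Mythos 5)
Your proof is correct and follows essentially the same route as the paper: introduce the resolvent $h_\mu = (\mu + \L_1)^{-1} g$ (the paper's $\psi_\mu$), observe that it solves a $\pi$-weighted analogue of the approximate-corrector equation \eqref{e.def.phimu}, run the energy estimates, and pass to the limit via the weak formulation to identify the limit with $\E[D\phi\cdot\a D\phi]$ and conclude from \eqref{e.basis}. The only stylistic variation is that you pass to the limit on the linear term $-\E[\a\xi\cdot D h_\mu]$, which needs only weak convergence of $Dh_\mu$, whereas the paper passes to the limit on $\E[D\psi_\mu\cdot\a D\psi_\mu]+\mu\E[\pi\psi_\mu^2]$ and therefore invokes the strong $L^2$ convergence $D\psi_\mu\to D\phi$ (justified by the same argument as for \eqref{e.conv.phimu}).
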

\begin{proof}
We recall that the rightmost term in \eqref{e.discrete.stuff} is interpreted as
\begin{equation*}  %\label{e.}
\lim_{\mu \to 0} \la g, (\mu + \L_1)^{-1} g \ra_\pi,
\end{equation*}
and that 
\begin{equation*}  %\label{e.}
 \E[\xi \cdot \a \xi] - \xi \cdot \ahom \xi = \E[D\phi \cdot \a D \phi],
\end{equation*}
where $D \phi$ is the $L^2$ limit of $D \phi_\mu$, see \eqref{e.conv.phimu}. 
Let $\psi_\mu$ solve 
\begin{equation*}  %\label{e.}
\Ll( \mu + \L_1 \Rr) \psi_\mu = g,
\end{equation*}
that is,
\begin{equation*}  %\label{e.}
\Ll( \mu - \pi^{-1} D \cdot \a D  \Rr) \psi_\mu = \pi^{-1} D \cdot \a \xi.
\end{equation*}
By the same arguments allowing to justify \eqref{e.conv.phimu}, we can verify that
\begin{equation*}  %\label{e.}
D \psi_\mu \xrightarrow[\mu \to 0]{L^2(\Omega)} D \phi,
\end{equation*}
and that $\mu^\frac 1 2 \psi_\mu$ converges to $0$ in $L^2(\P)$. (This uses that $\pi$ is uniformly bounded from above and below.) In particular,
\begin{equation*}  %\label{e.}
\E \Ll[ D \phi \cdot \a D \phi \Rr] = \lim_{\mu \to 0} \E \Ll[ D \psi_\mu \cdot \a D \psi_\mu \Rr] .
\end{equation*}
Moreover, by the weak formulation of the equation for $\psi_\mu$, we have
\begin{align*}  %\label{e.}
\la g, (\mu + \L_1)^{-1} g \ra_\pi & = \la g, \psi_\mu \ra_\pi \\
& = -\E \Ll[ \a \xi \cdot D \psi_\mu \Rr]  \\
& = \E \Ll[ D\psi_\mu \cdot \a D \psi_\mu \Rr]  + \mu \E \Ll[ \pi \psi_\mu^2\Rr] \\
& \xrightarrow[\mu \to 0]{} \E[D \phi \cdot \a D \phi]. \qedhere
\end{align*}
\end{proof}

\begin{figure}
\centering
\includegraphics[scale=0.5,trim=0.5cm 2cm 0cm 1cm, clip=true]{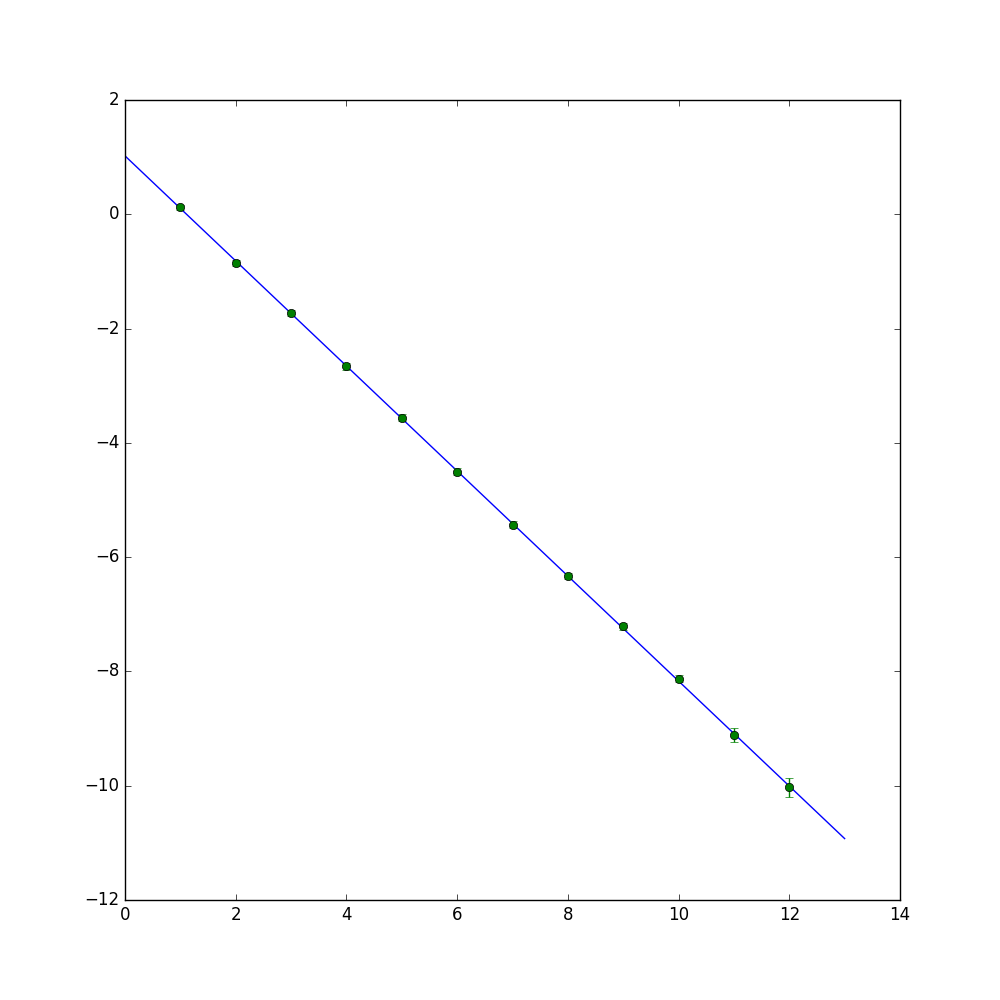}
\caption{
% slope is -0.92 and intercept is 1.022.
\small{
The base-2 logarithm of the quantity on the left side of \eqref{e.error.discrete}, as a function of $L$. The error bars are $95\%$ confidence intervals. The slope of the regression line is $-0.9$ and the intercept is $1.0$.
}
}
\label{f.error.discrete}
\end{figure}

\begin{figure}
\centering
\includegraphics[scale=0.5,trim=0.5cm 2cm 0cm 1cm, clip=true]{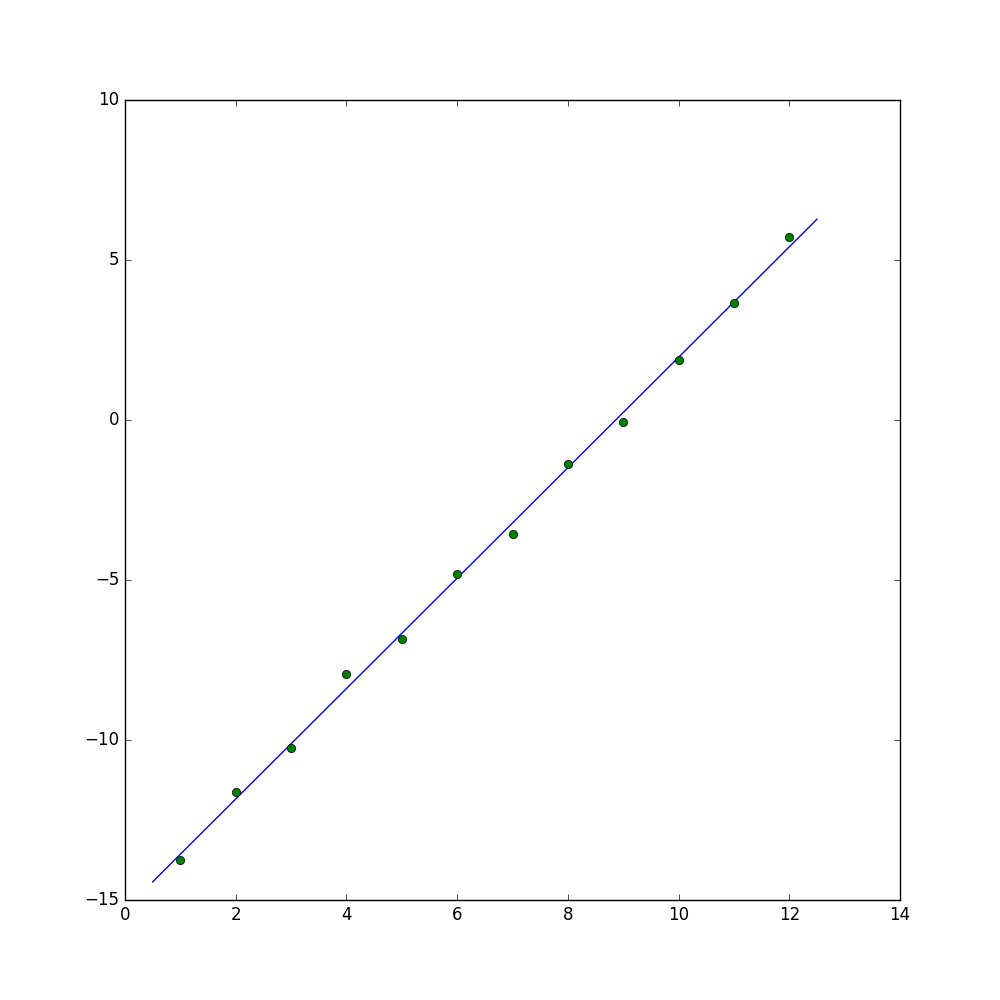}
\caption{
\small{
The base-2 logarithm of the CPU time used to compute $D_L$, in seconds, as a function of $L$. The slope of the regression line is $1.7$ and the intercept is $-15.3$.
}
}
%slope is 1.725 and intercept is $-15.294$.
% times 7.2701e-5    0.000317794  0.000820106  0.00410621   0.00864569   0.0358743    0.0848155    0.388437     0.960385     3.63845     12.5884      52.7107  
\label{f.cpu.discrete}
\end{figure}

Note that the operator $\mathrm{Id} - \L_1$ is a contraction in $L^\infty(\Omega)$. It is also self-adjoint, and its spectrum is therefore contained in $[-1,1]$. The spectrum of $\L_1$ is thus contained in $[0,2]$. In order to avoid problems related to the periodicity of the underlying Markov chain, we modify slightly the formula~\eqref{e.identity.resolv.discrete} for computing~$\la g, \L_1^{-1} g \ra_{\pi}$. We use instead that, for every~$\lambda \in [0,2]$,
\begin{equation}  
\label{e.another.identity}
\frac 1 {\lambda} = \frac 1 2\sum_{k = 0}^{+\infty}\Ll(1-\frac{\lambda}{2}\Rr)^k %+ 2\frac{\Ll(1-\frac{\lambda}{2}\Rr)^n}{\lambda}
,
\end{equation}
so that, setting
\begin{equation*}  %\label{e.}
\mathcal{P} := \mathrm{Id} - \frac 1 2 \L_1,
\end{equation*}
we have by spectral calculus that
\begin{align}  %\label{e.}
\notag
\la g, \L_1^{-1} g \ra_\pi & = \frac 1 2 \sum_{k = 0}^{+\infty} \la g, \mathcal P^k g \ra_\pi \\
\label{e.self.mclT}
& = \frac 1 2 \sum_{k = 0}^{+\infty} \Ll(\la \mathcal P^k g, \mathcal P^k g \ra_\pi + \la \mathcal P^k g, \mathcal P^{k+1} g \ra_\pi\Rr).
\end{align}
In view of the rigorous results in the continuous-time case, we expect that
\begin{equation}  
\label{e.decay.discrete}
\Ll| \la g, \mathcal P^k g \ra_\pi \Rr| \le C (1+k)^{-1 - \frac d 2}.
\end{equation}
%The formula \eqref{e.self.mclT} motivates the introduction of the functions $g_k \in L^\infty(\Omega)$ defined recursively by
%\begin{equation}
%\label{e.def.wk}
%g_{0} := g \quad \mbox{and} \quad \forall k \in \N, \ g_{k+1} := \mathcal P g_k,
%\end{equation}
%so that 
%\begin{equation}
%\label{e.I.discrete.time}
%\la g, \L_1^{-1} g \ra_\pi =  \sum_{k = 0}^{\infty} \Ll( \la g_k, g_k \ra_\pi + \la g_k, g_{k+1} \ra_\pi\Rr) .
%\end{equation}
If we write $w(k,x) := (\mcl P^k g)(\theta_x \a)$, then the function $w$ solves the discrete-time parabolic equation
\begin{equation}
\label{e.disc.parab}
\Ll\{
\renewcommand{\arraystretch}{1.5}
\begin{array}{rll}  %\label{}
 w(k+1,x) & =  w(k,x) + \frac 1 {2\pi(x)} \nabla \cdot \a \nabla  w(k,x)  & \text{ for } (k,x) \in \N \times \Zd, \\
 w(0,x) & = \frac 1 {\pi(x)} (\nabla \cdot \a \xi)(x), & \text {for } x \in \Zd,
\end{array}
\Rr.
\end{equation}
where we write $\pi(x) := \pi(\theta_x \a)$. In analogy with Theorem~\ref{t.main} and Remark~\ref{e.sqrt.log}, we define, for each $L \in \N$,
\begin{equation}
\label{e.def.Dl}
D_L := \sum_{\ell = 0}^{L-1} \sum_{k = 2^\ell-1}^{2^{\ell+1}-2} \frac 1 {|B_{r(L,\ell)}|} \sum_{x \in B_{r(L,\ell)}} \pi(x) \Ll( w^2(k,x) + w(k,x) w(k+1,x) \Rr) ,
\end{equation}
where 
\begin{equation*}  %\label{e.}
r(L,\ell) := 2^{L - \frac \ell 2} ,
\end{equation*}
and we expect that
\begin{equation}  
\label{e.error.discrete}
\E \Ll[  \Ll( \la g, \L^{-1} g \ra_\pi - D_L  \Rr)^2  \Rr]^\frac 1 2 \le C L^\frac 1 2 2^{-\frac{dL}{2}}.
\end{equation}
Moreover, the extraneous factor of $L^\frac 1 2$ is expected to be absent if we redefine $r(L,\ell)$ to be $2^{L - \frac \ell 2 + \eps \ell}$, for $\eps > 0$ arbitarily small. Finally, the computational cost of this scheme is given by
\begin{align*}  %\label{e.}
\sum_{l = 0}^{L-1} \sum_{k = 2^{\ell} - 1}^{2^{\ell + 1} - 2} |B_{r(L,\ell)}| \le C  \sum_{l = 0}^{L-1} \sum_{k = 2^{\ell} - 1}^{2^{\ell + 1} - 2} 2^{d \Ll( L - \frac {\ell}{2} \Rr) } \le C 2^{dL} 
\left|
\begin{array}{ll}
L & \qquad \text{if } d = 2, \\
1 & \qquad \text{if } d \ge 3.
\end{array}
\Rr.
\end{align*}
If we replace $r(L,\ell)$ by $2^{L - \frac \ell 2 + \eps \ell}$, then the complexity of this method degrades in $d = 2$, no matter how small $\eps > 0$ is chosen. This is in contrast with the method explored in the previous sections. On the other hand, in dimension $d \ge 3$ and for $\eps > 0$ sufficiently small, the complexity remains bounded by $C 2^{dL}$ and we that the error bound in \eqref{e.error.discrete} holds with right-hand side replaced by $2^{-\frac {dL}{2}}$. We thus expect this scheme to match exactly the complexity lower bound given by Proposition~\ref{p.lower.bound} in dimensions $d \ge 3$. 

\smallskip

We now report on numerical tests related to the quantity $D_L$ defined in~\eqref{e.def.Dl}. The code can be downloaded as part of the source files of the arXiv posting of this paper. We explore the same two-dimensional example as in Section~\ref{s.discrete}, and use the same computer to run the code. Figure~\ref{f.error.discrete} displays the base-2 logarithm of the left side of \eqref{e.error.discrete}, as a function of $L$, while Figure~\ref{f.cpu.discrete} displays the base-2 logarithm of the required CPU time to compute $D_L$. The numerical findings are very consistent with the heuristic predictions above. It takes less than one minute to compute $D_{12}$. The ratio of the slopes in Figures~\ref{f.cpu.discrete} and \ref{f.error.discrete} is about $1.9$, in close agreement with the predicted theoretical complexity. The fact that we observe an exponent slightly below $2$ may be explained by the diminishing importance taken up by boundary layers as we move to larger values of $L$.

% not easy to find incomplete Cholesky blah in julia it seems; 
% http://danspielman.github.io/Laplacians.jl/latest/usingSolvers/index.html looks nice though

%\bigskip

%
%
%
%%%%%%%%%%%%%%%%%%%%%%%%%%%%
%%%%%%%%%%%%%%%%%%%%%%%%%%%%
%
%
%

\subsection*{Acknowledgments} I would like to thank Josselin Garnier for an inspiring talk which motivated me to revisit this problem, Tony Leli\`evre for his helpful feedback, and Harmen Stoppels for his precious help with the Julia language. This work has been partially supported by the ANR Grant LSD (ANR-15-CE40-0020-03).

\small
\bibliographystyle{abbrv}
\bibliography{efficient}

\end{document}